\title{
Atiyah sequences of braided Lie algebras \\ ~ \\ and their splittings   
}
\date{v2 October 2024}
\author{Paolo Aschieri, Giovanni Landi, Chiara Pagani}
\numberwithin{equation}{section}
\let\oldtocsection=\tocsection
\let\oldtocsubsection=\tocsubsection
\let\oldtocsubsubsection=\tocsubsubsection
 \renewcommand{\tocsection}[2]{\hspace{0em}\oldtocsection{#1}{#2}}
\renewcommand{\tocsubsection}[2]{\hspace{1em}\oldtocsubsection{#1}{#2}}
\renewcommand{\tocsubsubsection}[2]{\hspace{2em}\oldtocsubsubsection{#1}{#2}}
\address[]{\textit{Paolo Aschieri}  \newline \indent 
Universit{\`a} del Piemonte Orientale, 
\newline \indent 
Dipartimento di Scienze e Innovazione Tecnologica
\newline \indent   viale T.~Michel~11,~15121~Alessandria,~Italy,
\newline \indent  and INFN Torino, via P.~Giuria~1, 10125~Torino,~Italy,
\newline \indent  and Arnold-Regge centre, Torino, via P.~Giuria~1, 10125~Torino,~Italy}
\email{paolo.aschieri@uniupo.it}
\address[]{\textit{Giovanni Landi }
\newline \indent Matematica, Universit\`a di Trieste, Via A. Valerio 12/1, 34127 Trieste, Italy,
\newline \indent and INFN Sezione di Trieste, Trieste, Italy. }
\email{landi@units.it}
\address[]{\textit{Chiara Pagani} 
\newline \indent    Alma Mater Studiorum Universit\`a di Bologna,
\newline \indent Dipartimento di
Matematica,  Piazza di Porta S. Donato, 5, 40126 Bologna, Italy.
\newline \indent \textit{Current address:} Universit\`a degli Studi di Napoli Federico II, 
\newline \indent  Dipartimento di Matematica e Applicazioni, Via Cintia 21, 80126 Napoli, Italy. }
\email{cpagani@units.it}
\theoremstyle{plain}
\newtheorem{thm}{Theorem}[section]
\newtheorem{lem}[thm]{Lemma}
\newtheorem{prop}[thm]{Proposition}
\newtheorem{cor}[thm]{Corollary}
\theoremstyle{definition}
\newtheorem{defi}[thm]{Definition}
\theoremstyle{remark}
\newtheorem{rem}[thm]{Remark}
\newcommand{\II}{\mathds{1}}
\newcommand{\nn}{\nonumber}
\newcommand{\ot}{\otimes}
\newcommand{\beq}{\begin{equation}}
\newcommand{\eeq}{\end{equation}}
\newcommand{\trl}{\vartriangleright}
\newcommand{\bbK}{\mathds{k}}
\newcommand{\id}{\mathrm{id}}
\newcommand{\M}{\mathcal{M}}
\renewcommand{\O}{\mathcal{O}}
\newcommand{\IR}{\mathbb{R}}
\newcommand{\U}{\mathcal{U}}
\renewcommand{\r}{\mathsf{R}}
\newcommand{\br}{{\overline{\r}}}
\newcommand{\dott}{{\scriptstyle{\bullet}_\theta}}
\newcommand{\cun}{\varepsilon}
\newcommand{\zero}[1]{{#1}_{\scriptscriptstyle{(0)}}}
\newcommand{\one}[1]{{#1}_{\scriptscriptstyle{(1)}}}
\newcommand{\mone}[1]{{#1}_{\scriptscriptstyle{(-1)}}}
\newcommand{\two}[1]{{#1}_{\scriptscriptstyle{(2)}}}
\newcommand{\three}[1]{{#1}_{\scriptscriptstyle{(3)}}}
\newcommand{\aut}[1]{\mathrm{aut}^\r_B(#1)}
\newcommand{\Der}[1]{\mathrm{Der}{(#1)}}
\newcommand{\DerR}[1]{\mathrm{Der}^\r{(#1)}}
\newcommand{\DerH}[1]{\mathrm{Der}^\r_{{\M^H}}{(#1)}}
\newcommand{\g}{\mathfrak{g}}
\newcommand{\Hom}{{\rm{Hom}}}
\newcommand{\stwo}{\tfrac{1}{\sqrt{2}}}
\newcommand{\half}{\tfrac{1}{2}}
\renewcommand{\lq}{\llbracket}
\renewcommand{\rq}{\rrbracket}
\newcommand{\bb}{b}
\newcommand{\wl}{{L}}
\newcommand{\ch}{{T}}
\newcommand{\wy}{{Y}}
\newcommand{\wk}{K}
\newcommand{\vp}{\omega}
\newcommand{\defp}{\lambda}
\newcommand{\cI}{\mathcal{I}}
\newcommand{\cl}{\mathcal{L}}
\newcommand{\ct}{\mathcal{T}}
\begin{document}

\begin{abstract} 
{Given an equivariant noncommutative principal bundle, we construct}
an Atiyah sequence of braided derivations whose splittings give
connections on the bundle. Vertical braided derivations act as
infinitesimal gauge transformations on connections.
{In the case of the principal $SU(2)$-bundle} over the sphere $S^{4}_\theta$ an
equivariant splitting of the Atiyah sequence recovers the instanton connection. An infinitesimal action of the braided conformal Lie algebra $so_\theta(5,1)$ yields a five parameter family of splittings.
On the principal $SO_\theta(2n,\mathbb{R})$-bundle of orthonormal frames over the sphere $S^{2n}_\theta$,  
{a} 
 splitting of the sequence 
leads to the Levi-Civita connection for the
`round' metric on $S^{2n}_\theta$. The corresponding Riemannian geometry of $S^{2n}_\theta$ is worked out.
\end{abstract}

\maketitle

\begin{spacing}{0.01}
  \tableofcontents
\end{spacing}

\parskip =.75 ex
\allowdisplaybreaks[4]

\section[Intro]{Introduction}
The Atiyah sequence of a principal bundle over a manifold $M$ is an important tool in the study of the geometry of Yang--Mills theories  \cite{AB}. It is given as a sequence of vector bundles over $M$ with Lie algebra structures on the corresponding modules of sections, 
{resulting} in a short exact sequence of (infinite dimensional) Lie algebras,   
\beq\label{atiyah-sections}
 0 \to   \Gamma {\operatorname{ad}(P) }  \to \mathfrak{X}_G(P) \to \mathfrak{X}(M) \to 0
\eeq
with principal $G$-bundle $P \to M$.  
Here $\mathfrak{X}(M)$ are the vector fields over $M$ (the section of the tangent bundle $TM$), while
$\mathfrak{X}_G(P)$  consists of $G$-invariant vector fields on $P$ (the $G$-invariant sections of the tangent bundle $TP$ 
along the fibres of $P$), and $\Gamma {\operatorname{ad}(P) } $ its Lie subalgebra made of vertical ones (with ${\operatorname{ad}(P) }$ the bundle associated to $P$ via the
adjoint representation of $G$ on its Lie algebra).  

A splitting of the sequence corresponds to a $G$-invariant direct sum decomposition of the tangent space $TP$   in horizontal and vertical parts, that is, to a connection on $P \to M$. 
The space $\Gamma {\operatorname{ad}(P) }$  {is the (infinite dimensional) Lie algebra of the gauge group of the principal bundle $P$. Its elements are infinitesimal gauge transformations, \cite[\S 3]{AB}.}

Exact sequences of vector bundles (named after Atiyah) and their splittings were introduced in \cite{At57} in the complex analytic context, motivated by the study of connections  and obstructions to their existence. Later versions are in \cite{Tel75}, \cite{Le85}, with a braided case coming from a $\mathbb{Z}_2$-grading in \cite{LM87}. The sequence can also be seen as a Lie algebroid \cite{Mac10}. 
 
In \S \ref{sec:AS} of the present paper, we study a quantum version 
$
0 \to \g  \to {P}   \to {T} \to 0
$
of the 
Lie algebroid \eqref{atiyah-sections} in the context of braided Lie algebras (of derivations) associated with a triangular Hopf algebra $(K,\r)$. 
\footnote{
Indeed in the present paper we only consider triangular Hopf algebras $(K,R)$ and the associated symmetric monoidal category of $K$-modules. Thus, rather than braided Lie algebras, a more appropriate term would be $(K,R)$-symmetric Lie algebras. The latter terminology is the one adopted in \cite{pgc-main}.
} 
Each term in the sequence is a braided Lie--Rinehart pair for a braided commutative algebra $B$.  Given any such an exact sequence, we define  a connection  to be a splitting of 
{the underlying short exact sequence of vector spaces}. 
The splitting does not need to be  a braided Lie algebra morphism: the curvature is defined as the $\g$-valued braided two-form on $T$ that measures the extent to which the connection fails to be such. 
In \S \ref{se:scgt} elements of $\g$ are explicitly seen as infinitesimal gauge transformations which act   on the set of connections (splittings) of the sequence. 
Having a connection one can define the covariant derivative of $\g$-valued braided forms on $P$.  The connection and the curvature satisfy a structure equation and a Bianchi identity (Proposition \ref{prop:bianchi}).  In this braided context, the covariant derivative of the curvature does not vanish in general (in agreement with the result in \cite{Du}) and \cite{RZ21}), while it does for a $K$-equivariant connection. 

In \S \ref{sec:AS-HG} we consider $K$-equivariant noncommutative principal bundles, that is $K$-equivariant Hopf--Galois extensions $B \subset A$, {where $(K,\r)$ is a  triangular Hopf algebra. }
We have then a corresponding sequence of braided Lie algebras of derivations as in \eqref{as},
$$
0 \to \aut{A} \to \DerH{A} \to \DerR{B}   \to 0 
$$ 
which, when exact, is a noncommutative version of the 
Atiyah sequence \eqref{atiyah-sections}. 
The braided Lie algebra of infinitesimal gauge transformations
$\aut{A}$ of a Hopf--Galois extension consists
of vertical braided derivations of the algebra $A$; it was introduced and studied in \cite{pgc-main, pgc-examples}.   
The theory is exemplified with the construction in \S \ref{sec:ASib} of the Atiyah sequence of
braided Lie algebras for the instanton bundle on the noncommutative sphere $S^4_\theta$ and a five parameter family of splittings (connections) of it. 
In a $C^*$-algebra context an Atiyah sequence for noncommutative principal bundles was given in \cite{SW}. 

The last part of the paper in \S \ref{sec:riem-geo} is dedicated to
the example of the noncommutative principal $SO_\theta(2n,
\mathbb{R})$-bundle $SO_\theta(2n+1, \mathbb{R})\to S^{2n}_\theta$
on  the noncommutative sphere
$S^{2n}_\theta$.
This is the noncommutative orthogonal frame bundle 
{through an }
identification of
braided derivations of $\O(S^{2n}_\theta)$ as sections of the
 bundle associated to the
principal bundle  via
the fundamental corepresentation of  $\O(SO_\theta(2n,\mathbb{R}))$ on the algebra $\O(\IR^{2n}_\theta)$.

The corresponding  Atiyah sequence is constructed and an equivariant
splitting determined.
This  leads to a novel and very direct construction
of the Levi-Civita connection on $S^{2n}_\theta$; the connection is 
explicitly presented and globally defined using global coordinate
functions on $S^{2n}_\theta$.
Indeed the principal equivariant connection induces a covariant derivative on the
associated tangent bundle which is torsion free and compatible with
the `round' metric.
We then work out the corresponding Riemannian geometry of
$S^{2n}_\theta$. The latter is an Einstein space (the Ricci tensor being
proportional to the metric, 
{see}  \eqref{ricci}) and a space form (the
scalar curvature being constant, 
{see} \eqref{sc}).

The study of Levi-Civita connections in noncommutative geometry is a
very active field of research.  
In the braided context uniqueness and existence of
Levi-Civita connections has been actively pursued, especially via Koszul formulae,
see \cite{AscLC} and the different contributions there referred to.
Our new contribution to this subject --in this paper on connections as splitting
of Atiyah sequences-- is the explicit and globally  defined expression
of the Levi-Civita connection on $S^{2n}_\theta$ given in \eqref{cov-der-T}.

\section{Algebraic preliminaries}\label{sec:alg-pre}
 
We work in the category of $\bbK$-modules with $\bbK$ a commutative field.
All algebras  are  unital and associative and morphisms of
algebras preserve the unit. 
{All coalgebras satisfy the corresponding dual conditions.}
We use standard terminologies and notations in
Hopf algebra theory. 
For $H$ a bialgebra, we  call  {\textit{$H$-equivariant}} a map of $H$-modules or $H$-comodules.

 Recall that a bialgebra (or Hopf algebra) $K$ is {\textit{quasitriangular}} if there exists an invertible element  $\r \in K \ot K$ (the universal $R$-matrix of $K$) with respect to which the coproduct
  $\Delta$ of $K$ is quasi-cocommutative
\beq\label{iiR} 
\Delta^{cop} (k) = \r \Delta(k) \br
\eeq for each $k \in K$, with $\Delta^{cop} := \tau \circ \Delta$, $\tau$ the flip map, 
 and $\br \in K \ot K$ the inverse of $\r$, $\r \br = \br \r = 1 \ot 1 $.
Moreover $\r$ is required to satisfy, 
\beq \label{iii} (\Delta \ot \id) \r=\r_{13} \r_{23} \qquad \mbox{and} \qquad 
(\id \ot \Delta)\r=\r_{13} \r_{12}.
 \eeq
 We write $\r:=\r^\alpha \ot \r_\alpha $ with an implicit sum. Then
$\r_{12}=\r^\alpha \ot \r_\alpha \ot 1$,  and similarly for $\r_{23}$ and $\r_{13}$. 
From conditions \eqref{iiR} and \eqref{iii} it follows that $\r$ satisfies the quantum Yang--Baxter equation
$
\r_{12} \r_{13} \r_{23}= \r_{23} \r_{13}\r_{12} 
$. 
The $\r$-matrix  of a quasitriangular bialgebra $(K,\r)$ is unital:
$(\cun \ot \id) \r = 1 =  (\id \ot \cun) \r$, 
with $\cun$ the counit of $K$.
When $K$ is a  Hopf algebra, the {quasitriangularity} implies  that its antipode $S$ is invertible and satisfies
$$ 
(S \ot \id) (\r)= \br  \; ; \quad
(\id \ot S) (\br)= \r \; ; \quad
(S \ot S) (\r)= \r \; .
$$
The Hopf algebra $K$ is said to be {\textit{triangular}} when 
$\br= {\r}_{21},$ with ${\r}_{21}= \tau(\r)=\r_\alpha \ot \r^\alpha$.

\subsection{Braided Lie algebras of derivations}\label{se:bla}

For the purpose of the present paper we only need to
consider braided Lie algebras associated with a triangular 
Hopf algebra $(K, \r)$. 
A $K$-braided Lie algebra is a $K$-module $\g$, with action $\trl: K \ot \g \to \g$, which is endowed with a bilinear map
(a bracket)
$$
[~,~]: \g\otimes \g\to \g
$$
which is equivariant, that is, $k\trl [u,v]=[\one{k}\trl u,\two{k}\trl v]$,  
for $\Delta(k) = \one{k} \ot \two{k}$ the coproduct of $K$, 
and satisfies the conditions
\begin{enumerate}[(i)]
\item braided antisymmetry: $[u,v]=-[\r_\alpha\trl v, \r^\alpha \trl u ]$,  \vspace{3pt}
\item braided Jacobi identity: $[u,[v,w]]=[[u,v],w]+[\r_\alpha\trl v, [\r^\alpha \trl u,w] ]$,
\end{enumerate}
for all $u,v,w \in \g$, $k\in K$.
A morphism of braided Lie algebras is a morphism of $K$-modules that in addition commutes with the brackets.

Any $K$-module algebra $A$  is a $K$-braided Lie algebra for the braided commutator 
\beq\label{Abla}
[~,~]: A\ot A\to A, \qquad a\ot b\mapsto [a,b] := ab - (\r_\alpha \trl b) \, (\r^\alpha\trl a) \, .
\eeq 
Also, for $A$ a $K$-module algebra,   the $K$-module algebra $(\Hom(A,A), \trl_{\Hom(A,A)})$
of linear maps from $A$ to $A$ with
action
\begin{align}\label{action-hom}
\trl_{\Hom(A,A)}:  K \ot \Hom(A,A) &\to \Hom(A,A) 
\nn \\
k \ot \psi & \mapsto  k \trl_{\Hom(A,A)} \psi : \; a \mapsto \one{k} \trl ( \psi(S(\two{k})\trl a) )
\end{align}
is a  braided Lie algebra with the braided commutator;  here  $S$ is the antipode of $K$.
 An important example of a braided Lie algebra associated to a $K$-module algebra is that  of braided derivations,  \cite[\S 5.2]{pgc-main}. A braided derivation is any
$Y\in \Hom(A,A)$ which satisfies  
\beq\label{Der}
Y(aa')= Y(a)
a' + (\r_\alpha\trl a)\,(\r^\alpha 
\trl_{\Hom(A,A)}
Y)(a') 
\eeq
for any $a,a'$ in $A$. We denote ${\mathrm{Der}}^\r(A)$
the  $\bbK$-module
of {braided derivations} of $A$ (to lighten notation we often drop the label $\r$).
It is a $K$-submodule of $\Hom(A,A)$, 
with action given by the restriction of 
 $\trl_{\Hom(A,A)}$
 \begin{align}\label{action-der}
\trl_{\Der{A}}:  K \ot \Der{A} &\to \Der{A}
\nn \\
k \ot Y & \mapsto  k \trl_{\Der{A}} Y : \; a \mapsto \one{k} \trl Y (S(\two{k})\trl a) 
\end{align}
and  a  braided Lie subalgebra of  $\Hom(A,A)$ with braided commutator
\begin{align}\label{bracket-der}
[~ , ~]  :& \, \Der{A}\ot \Der{A} \to \Der{A}
\nn \\
& Y\ot Y'\mapsto [Y,Y'] :=Y\circ Y'-(\r_\alpha\trl_{\Der{A}} Y') \circ (\r^\alpha\trl_{\Der{A}} Y).
\end{align}

{
\begin{rem}\label{rem:topo}
  When $K$ is not finite
  dimensional over $\bbK$ the matrix $\r$ is in general not an
  element of $K\otimes K$ but rather belongs to a suitable topological
  completion of the tensor product algebra. In the
  examples of the present paper this fact does not constitute a problem since 
  $\r$ acts diagonally in the fundamental representation and we
  consider representations  that are algebraic direct sums of
  this one. Representation-wise we
  hence consider the braided monoidal category of $K$-modules that are algebraic direct sums of
  finite dimensional representations of $K$-modules.
\end{rem} 
}

\subsection{Brackets of Lie algebra valued maps}\label{se:bblg}

Let  $N$ be a $K$-module and $\g$ a braided Lie algebra. The space of $\g$-valued multilinear maps $\eta : N \ot \dots \ot N \to \g$ which are  braided antisymmetric in their arguments ($\g$-valued forms on $N$), can be given the structure of a (super) braided Lie algebra.
In the present paper we just need the bracket between  one-forms and between a one-form and a two-form. 

Given any two linear maps $\eta, \phi : N \to \g$, their bracket is defined by 
\begin{align}\label{comm-gen}
\lq \eta, \phi \rq (Y, Y') 
& := \half 
\Big( [\eta (\r_\gamma \trl  Y), (\r^\gamma \trl \phi)(Y')] -  ( Y \ot Y' \to \r_\alpha \trl Y' \ot  \r^\alpha \trl Y ) \Big) \nn \\
& \:= \half 
\big( [\eta (\r_\gamma \trl  Y), (\r^\gamma \trl \phi)(Y')] - [\eta (\r_\gamma\trl  Y'), \r^\gamma \trl (\phi(Y))] \big)
\end{align}
for $Y, Y' \in N$.
The action on maps is the adjoint one in \eqref{action-hom}. By definition, the bracket is braided antisymmetric in the arguments, 
$
  \lq \eta, \phi \rq (Y, Y') =-\lq \eta, \phi \rq  (\r_\alpha \trl Y', \r^\alpha \trl Y) \, ,
$
while a short computation shows that 
$$
 \lq \eta, \phi \rq    = \lq \r_\tau \trl \phi, \r^\tau \trl \eta \rq  \, . 
$$
 In particular, when $\phi=\eta$ the formula \eqref{comm-gen} can be written as 
 \beq\label{etaeta}
\lq \eta , \eta \rq  (Y,Y') 
= \half 
\big[\eta (\r_\gamma \trl Y)+ \r_\gamma \trl (\eta( Y)), (\r^\gamma \trl \eta) (Y')\big]  .
\eeq
For a one-form $\eta$ and a two-form $\Phi$ one defines 
\beq\label{etaPhi}
\lq \eta, \Phi \rq  (X,Y,Z) := \big[\eta( \r_\gamma \trl X), (\r^\gamma \trl \Phi) (Y \ot Z)\big]  
+ \mbox{b.c.p.}
\eeq
and
\beq\label{Phieta}
\lq \Phi, \eta \rq  (X,Y,Z) := \big[ \Phi (\r_\gamma \trl (X \ot Y)), (\r^\gamma \trl \eta) (Z)\big]  
+ \mbox{b.c.p.}  
\eeq
with $\mbox{b.c.p.}$ standing for braided cyclic permutations of elements $(X,Y,Z)$  in $N$.
The braided antisymmetry of the two-form $\Phi$ implies the same property for both expressions above.  A short computation shows that 
$$
 \lq \eta, \Phi \rq   = - \lq \r_\lambda \trl \Phi, \r^\lambda \trl \eta \rq  \, . 
$$
From this one also computes that 
\beq\label{etaPhieta}
\big( \lq \eta , \Phi \rq  - \lq  \Phi , \eta \rq  \big) (X,Y,Z) 
 = \big[\eta (\r_\gamma \trl X)+ \r_\gamma \trl (\eta( X)), (\r^\gamma \trl \Phi) (Y \ot Z)\big]  + \mbox{b.c.p.}
\eeq
for $X,Y,Z \in N$, and in parallel with \eqref{etaeta}.

Finally, the $K$-equivariance of the bracket $[,]$ in $\g$ leads to the $K$-equivariance
 of the bracket in \eqref{comm-gen} ,
$$
 k \trl \lq \eta, \phi \rq  = \lq \one{k} \trl \eta, \two{k} \trl \phi \rq  \; , 
$$
and of those in \eqref{etaPhi} and \eqref{Phieta} with similar expressions. 

The bracket satisfies a braided Jacobi identity. For one-forms this reads as in the following proposition.
\begin{prop}
For one-forms $\omega, \eta, \varphi$, we have the identity
\beq\label{jacobi-forms}
\lq \omega , \lq \eta , \varphi \rq  \rq  
= \lq \lq \omega, \eta \rq , \varphi  \rq  
-  \lq \r_\alpha \trl \eta , \lq \r^\alpha \trl \omega  , \varphi \rq  \rq  .
\eeq
\end{prop}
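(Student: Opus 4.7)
Identity \eqref{jacobi-forms} is the braided Jacobi identity for $\g$-valued one-forms on the $K$-module $N$, expressing that the bracket $\lq\,\cdot\,,\,\cdot\,\rq$ inherits the structural properties of the bracket of $\g$. My plan is to evaluate both sides on an arbitrary triple $(X,Y,Z)\in N\otimes N\otimes N$ and to reduce the resulting equality to the braided Jacobi identity (ii) of $\g$, handling the $\r$-insertions via quasi-cocommutativity \eqref{iiR}, the coproduct formulas \eqref{iii}, and the consequent Yang--Baxter relation $\r_{12}\r_{13}\r_{23}=\r_{23}\r_{13}\r_{12}$.

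First I would expand the left-hand side using \eqref{etaPhi}, obtaining
$$\lq\omega,\lq\eta,\varphi\rq\rq(X,Y,Z)=\big[\omega(\r_\gamma\trl X),\,(\r^\gamma\trl\lq\eta,\varphi\rq)(Y,Z)\big]+\mbox{b.c.p.}$$
The $K$-equivariance of the bracket of one-forms gives $\r^\gamma\trl\lq\eta,\varphi\rq=\lq\one{\r^\gamma}\trl\eta,\two{\r^\gamma}\trl\varphi\rq$, and the identity $(\id\otimes\Delta)\r=\r_{13}\r_{12}$ distributes the coproduct of $\r^\gamma$ across the two arguments. Inserting \eqref{comm-gen} then rewrites each summand as a nested commutator in $\g$ of the form $[\omega(\cdot),[\eta(\cdot),\varphi(\cdot)]]$, with each argument carrying its own chain of $\r$-actions. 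An analogous expansion applied to both terms on the right-hand side via \eqref{Phieta} and \eqref{etaPhi} produces nested commutators of type $[[\omega(\cdot),\eta(\cdot)],\varphi(\cdot)]$ and $[\r_\alpha\trl\eta(\cdot),[\r^\alpha\trl\omega(\cdot),\varphi(\cdot)]]$, again with $\r$-factors in every slot.

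The heart of the argument is then the termwise application of the braided Jacobi identity (ii) of $\g$ to the summands on the left, rewriting each
$$[u,[v,w]]=[[u,v],w]+[\r_\alpha\trl v,[\r^\alpha\trl u,w]].$$
After this move, the resulting terms must be matched against those produced on the right-hand side of \eqref{jacobi-forms}. The matching is achieved by re-indexing the accumulated $\r$-factors through the Yang--Baxter equation and the two relations \eqref{iii}, and by noting that the braided cyclic permutations of $(X,Y,Z)$ on the two sides align under the same manipulations.

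The main obstacle is the bookkeeping of the $\r$-insertions: each term may carry up to three nested $\r$-actions, and showing two a priori different indexings coincide after the Yang--Baxter reshuffle requires careful use of the Sweedler-like dummies $\r^\alpha\otimes\r_\alpha$. A cleaner alternative, which I would adopt to organise the computation, is to work intrinsically in the $(K,\r)$-symmetric monoidal category of $K$-modules: the bracket \eqref{comm-gen} is the composition of the $\g$-bracket with the braided antisymmetrizer on $N\otimes N$, so \eqref{jacobi-forms} follows by naturality of the braiding together with the braided Jacobi identity in $\g$. This perspective makes manifest that \eqref{jacobi-forms} is the exact $(K,\r)$-symmetric lift of the classical graded Jacobi identity for Lie-algebra-valued one-forms.
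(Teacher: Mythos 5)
Your strategy---evaluate all three brackets on a triple $(X,Y,Z)$ via \eqref{comm-gen}, \eqref{etaPhi} and \eqref{Phieta}, reshuffle the accumulated $\r$-factors using \eqref{iii} and the Yang--Baxter equation, and conclude termwise from the braided Jacobi identity of $\g$---is exactly the route the paper takes. Note, however, that essentially all the content sits in the reshuffling you defer (the paper must first replace one of the six terms by its braided cyclic permutation before \eqref{iii} and Yang--Baxter can be applied to align it with the others), and the proposed categorical shortcut (the bracket as the $\g$-bracket composed with the braided antisymmetrizer, plus naturality of the braiding) is asserted rather than established, so as written this is a correct plan rather than a complete proof.
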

\begin{proof}
We first compute separately the three terms in \eqref{jacobi-forms}: from   \eqref{etaPhi} and \eqref{comm-gen}, 
\begin{align}\label{j1}
2 &  \lq \omega, \lq \eta, \varphi  \rq \rq (X,Y,Z)  =  \big[ \omega (\r_\lambda \r_\gamma \trl X), [ (\r^\lambda \trl \eta) 
(\r_\alpha \trl Y) , (\r^\alpha \r^\gamma \trl \varphi) (Z) ]  \big]
\nn \\ 
& \qquad 
-  \big[  \omega (\r_\lambda \r_\gamma \trl X), [ (\r^\lambda \trl \eta) (\r_\alpha \trl Z) 
, \r^\alpha \trl ((\r^\gamma \trl \varphi) (Y)) ] \big]
+  \mbox{b.c.p.}  
\end{align}
and
\begin{align}\label{j3}
2 &  \lq \r_\tau \trl \eta, \lq \r_\tau \trl \omega, \varphi  \rq \rq (X,Y,Z) 
\\
& = \big[  (\r_\tau \trl \eta) (\r_\lambda \r_\gamma \trl X), [ (\r^\lambda \r^\tau \trl \omega) 
(\r_\alpha \trl Y) , (\r^\alpha \r^\gamma \trl \varphi) (Z) ]  \big]  \nn 
\\ 
& \quad 
- \big[  (\r_\tau \trl \eta) (\r_\lambda \r_\gamma \trl X), [ (\r^\lambda \r^\tau \trl \omega) (\r_\alpha \trl Z) 
, \r^\alpha \trl ((\r^\gamma \trl \varphi) (Y)) ] \big]
+  \mbox{b.c.p.}  \nn
\, .
\end{align}
While, from  \eqref{Phieta},  
\begin{align}\label{j2}
2 \lq \lq \omega, \eta \rq , \varphi  \rq (X,Y,Z) 
&  =  \big[ [ \omega (\r_\lambda \r_\gamma \trl X), (\r^\lambda \trl \eta) 
(\r_\alpha \trl Y)] , (\r^\alpha \r^\gamma \trl \varphi) (Z) \big] 
\\ 
&  
-  \big[  [ \omega (\r_\lambda \r_\alpha \trl Y), \r^\lambda \trl (\eta (\r_\gamma \trl X)) 
] , (\r^\alpha \r^\gamma \trl \varphi) (Z) \big]
+  \mbox{b.c.p.}  \nn .
\end{align}

\noindent
Using the braided cyclic permutation we substitute the second term in \eqref{j3} with the following one:
$$
 \big[(\r_\tau \trl \eta) (\r_\lambda \r_\gamma \r_\mu \trl Y), [ (\r^\lambda \r^\tau \trl \omega) (\r_\alpha \r^\nu \r^\mu \trl X) 
, \r^\alpha \trl ((\r^\gamma \trl \varphi) ( \r_\nu\trl Z)) ] \big] 
$$
which in turn, using \eqref{iii}, can be rewritten as
$$
\big[ \r_\tau \trl ( (\r^\lambda \trl \eta) (\r^\nu \r_\gamma \r_\mu \trl Y) ), 
[ \r^\tau \trl (\omega (\r_\lambda \r_\nu \r_\alpha \r^\mu \trl X)) 
	,( \r^\alpha \r^\gamma \trl \varphi) (Z) ] \big]  \, .
$$
Next, using the Yang--Baxter equation on the indices $\nu,\gamma,\alpha$ this reduces to
\begin{align*}
\big[ \r_\tau \trl ( (\r^\lambda \trl \eta) ( \r_\alpha \r^\nu \r_\mu \trl Y) ), 
[ \r^\tau \trl (\omega (\r_\lambda \r_\gamma \r_\nu \r^\mu \trl X)) 
,( \r^\alpha \r^\gamma \trl \varphi) (Z) ] \big] 
\\
=
\big[ \r_\tau \trl ( (\r^\lambda \trl \eta) ( \r_\alpha \trl Y) ), 
[ \r^\tau \trl (\omega (\r_\lambda \r_\gamma \trl X)) 
,( \r^\alpha \r^\gamma \trl \varphi) (Z) ]\big] \, .
\end{align*}
Finally, in the expression
$$
\Big( \lq \omega, \lq \eta, \varphi  \rq \rq - \lq \lq \omega, \eta \rq , \varphi \rq + 
\lq \r_\tau \trl \eta, \lq \r^\tau \trl \omega, \varphi  \rq \rq \Big) (X,Y,Z) 
$$
one finds that this term and the two positive ones in \eqref{j1} and \eqref{j2} sum up to zero due to the Jacobi identity for derivations. A similar computation shows that the other three terms sum up to zero as well, thus establishing \eqref{jacobi-forms}.
\end{proof}

\subsection{Brackets of Lie algebra valued forms on bimodules} 
{For a subalgebra $B \subseteq A$, the space $\Hom(A,A)$ is a left $B$-module via the left multiplication by elements in $B$. In general this is not the case for $\Der{A}$. A sufficient condition for that is the 
quasi-centrality of $B$ in $A$, that is 
\beq\label{qcAB} 
b\, a  =   (\r_\alpha \trl{a}) \, (\r^\alpha\trl{b})~, 
\eeq   
for all $ b\in B$, $a\in A$.
 When the $K$-module algebra $B$ is quasi-central in $A$, }
 the  braided Lie algebra $\Der{A}$ inherits the left $B$-module structure
\beq \label{LAmodderA}
 (b Y)(a):= b \, Y(a)\, ,  
\eeq
  for  $Y \in \Der{A}$, $b\in B$ and $a \in A$.  
  Then a right $B$-module structure is  given by
    \begin{equation}\label{RAmodderA}
Y \cdot b:=(\r_\alpha \trl b)(\r^\alpha\trl Y) .
  \eeq
We write $ Y \cdot b $ to  distinguish the right $B$-module structure from the evaluation 
 of a derivation on an element in $B$. 
 {From the definition of the bimodule structure it follows compatibility} with the  $K$-action:  
$$
 k\trl (bY \cdot b')=(\one{k}\trl b)(\two{k}\trl Y) \cdot (\three{k}\trl b') 
$$
 for all $k\in K,
b, b'\in B$ and $Y\in \Der{A}$. The Lie bracket   of $\Der{A}$ then satisfies   
      \begin{align}\label{moduloLie}
 [b Y, Y' \cdot b'] & = b [ Y, Y'] \cdot  b' 
+  b \, ( \r_\alpha\trl Y') \cdot  ((\r^\alpha \trl Y)(b')) \nn \\ &  \quad  -\big(\!\big(\r_\alpha\r_\beta\trl_{\Der{A}} Y'\big)\!\!\:( \r^\alpha \trl b)\!\big) 
   (\r^\beta\trl_{\Der{A}} Y) \cdot  b' 
\end{align}
  for all
$b,b'\in B$ and $Y,Y'\in \Der{A}$. 

Let $N$ and $N'$ be $K$-modules {and $B$-bimodules as above}. 
The space of right $B$-linear maps $\Hom_B(N, N')$ is a
$K$-module with action 
\begin{align}\label{action-hom-bis}
  k \trl_{\Hom(N,N')} \eta : \; Y \mapsto \one{k} \trl ( \eta(S(\two{k})\trl Y) )
\end{align}
(cf. \eqref{action-hom})
and a $B$-bimodule with actions 
$(b \eta)(Y)=b \, \eta(Y)$
and $\eta \, \cdot b= (\r_\alpha\trl b)(\r^\alpha\trl \eta)$, for $b\in B$, $\eta\in\Hom_B(N, N')$ and $Y \in N$.

With $\g$ a braided Lie algebra over $K$, the definition of the bracket in \S\ref{se:bblg} can be repeated for $\g$-valued forms on $N$, i.e. $\g$-valued right $B$-linear maps $\eta : N \ot_B \dots \ot_B N \to \g$ which are  braided antisymmetric in their arguments. 
As we shall see this results into a (super) braided Lie--Rinehart algebra.

\section{The  Atiyah sequence of braided Lie algebras} \label{sec:AS}

The classical Atiyah sequence \cite{At57,AB} is a sequence of vector bundles over a base space $M$ with Lie algebra structures on the corresponding modules of sections, this resulting in a short exact sequence of (infinite dimensional) Lie algebras.  In this paper we generalise the construction to a sequence of braided Lie algebras with braiding implemented by the triangular structure of a symmetry Hopf algebra $K$. 
In \S \ref{sec:AS-HG} we shall describe the basic example of this setting, the  sequence of braided Lie algebras of a $K$-equivariant Hopf--Galois extension $B = A^{coH} \subset A$, with structure Hopf algebra $H$.

 \subsection{The Atiyah sequence and the Lie--Rinehart structures}\label{se:seqrh}
 
Let  $(K,\r)$ be a triangular Hopf algebra and consider an exact sequence of $K$-braided Lie algebras,
\beq\label{as-gen}
0 \to \g \stackrel{\imath}{\to} {P} \stackrel{\pi} {\to} {T} \to 0 \,  
\eeq
 with  $\imath$ and $\pi$ braided Lie algebra morphisms. 
 We identify $\g$ with its image $\imath(\g)$ in $P$, while we write $\pi(Y) = Y^\pi $ for $Y\in P$, as the image in $T$ via the projection $\pi$. By construction $\g$ is an ideal in $P$ for the braided commutator:
$ [Y, V]  \in \g $, when $Y\in P$, $V\in \g$. 

{We further take $\g, P$ and $T$ to be $B$-bimodules with structures as in \eqref{LAmodderA} and
\eqref{RAmodderA} for $B$ a quasi-commutative algebra (that is \eqref{qcAB} holds for $A=B$)
and $\iota$, $\pi$ to be right $B$-module morphisms (and hence left ones).

Finally,} we take $(B,T)$ to be a braided Lie--Rinehart pair: the algebra 
$B$ is a $T$-module with
elements of $T$ acting as braided derivations of $B$,
$$
X (b b')=X(b) b' + (\r_\alpha\trl b)(\r^\alpha\trl X)(b') \, , \qquad b,b'\in B, \quad X \in T \, ,
$$
and there is compatibility with the $B$-module structure of $T$, 
\beq\label{blrp2}
[X, b X']  = X(b) X' + (\r_\alpha\trl b) [(\r^\alpha\trl X), X']  \, , \qquad b \in B , \quad X, X' \in T~,
\eeq
(which also fixes $[bX,  X' \cdot b'] $, cf. \eqref{moduloLie}).
The pair $(B,P)$ is as well taken to be a braided Lie--Rinehart pair (and so is $(B,\g)$ for the trivial action): 
elements of $P$ act as braided derivations of $B$ via the map $\pi$, 
$Y(b)= Y^\pi(b)$, for $Y \in P$ and $b \in B$.  The subalgebra $\g$ acts trivially on $B$: elements of $\g$ are `vertical'. 
For simplicity we take $T=\DerR{B}$. The sequence \eqref{as-gen} is  a braided Lie algebroid with anchor  $\pi$. 

\subsection{Splittings, connections and curvatures}\label{sec:sac}
A connection on the sequence \eqref{as-gen} is a splitting of it, that
is a right $B$-module map which is a section of $\pi$, 
\beq\label{hcon}
\rho : T \to P , \qquad \pi \circ \rho = \id_T \, . 
\eeq
 In general $\rho$ is not $K$-equivariant, that is it does not satisfy $k \trl \rho = \varepsilon(k) \rho$ for $k \in K$. 
 
 Using that $\rho(X) (b) = X(b)$ when $b\in B$, and recalling the adjoint action property $k \trl  (\rho (X)) = (\one{k} \trl \rho) (\two{k} \trl X)$
 for $X\in T$ and $k\in K$, 
 one easily shows that
 \begin{align}
(k \trl  \rho (X)) (b) = (k \trl X)(b) \label{uno} \\
 ((k \trl \rho) (X))(b) = \varepsilon(k)\, X(b)  \label{due} .
 \end{align}
From the $B$-bimodule structure of $T$ one also has $\rho(bX) = (\rho(\r_\alpha \trl X)) \cdot (\r^\alpha \trl b)$.

 \begin{rem}\label{rem:eq}
 In accordance with the $K$-adjoint action, $k \trl  (\rho (X)) = (\one{k} \trl \rho) (\two{k} \trl X)$, the connection $\rho$ ``acts from the left". Therefore it is natural to take it to be right $B$-linear. When $\rho$ is equivariant 
 right $B$-linearity also implies left $B$-linearity, $\rho( bX) = b \rho(X)$. 
 \end{rem}

The corresponding vertical projection, the retract of $\imath$,  is the right $B$-module map
\beq\label{vcon}
\vp : P \to \g , \qquad \vp(Y) := Y - \rho(Y^\pi) \, , \qquad Y \in P .
\eeq
Clearly $\vp(\rho(X)) =0$, for each $X \in T$. We denote  
\beq\label{horproj}
h:=\rho \circ \pi : P \to P 
\eeq
the horizontal projection. By definition $\omega + h = \id$. 

In general,   $\rho$ (and so $\vp$) is not a braided Lie algebra morphism. The extent to which it fails to be such is measured by the \textit{(basic)} curvature 
\begin{align} \label{hcur}
& \Omega(X,X') :=\rho([ X,X']  ) - \lq \rho, \rho \rq (X,X') \\
&\quad = \rho([ X,X']  ) - 
\half \Big( \left[\rho (\r_\alpha\trl  X), (\r^\alpha \trl \rho)(X') \right] 
- \left[\rho (\r_\alpha\trl  X'), \r^\alpha \trl (\rho( X ) ) \right]  \Big) , \nn 
\end{align}
for $X,X' \in T$.
From the $K$-equivariance of $\pi$ and $\pi \circ \rho = \id_T$ it follows that $\pi \circ \Omega = 0$,   and so $\Omega$ is $\g$-valued. Also, from $\omega\circ \rho = 0$ it follows that
$$
\Omega (X,X')= - (\vp\circ  \lq \rho, \rho \rq )(X,X') .
$$
{A braided antisymmetric right $B$-linear map from $T \ot_B T$  to $ \g $ is called a
 $\g$-valued braided two-form on $T$.

\begin{prop}
The curvature $\Omega$ is a $\g$-valued braided two-form on $T$. 
   \end{prop}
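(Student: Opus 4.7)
Three properties need checking: that $\Omega$ takes values in $\g$, that it is braided antisymmetric in its two arguments, and that it is right $B$-linear (and in particular well-defined on $T\otimes_B T$). The first property, already announced in the paragraph above, is easy: apply the braided Lie algebra morphism $\pi$ to \eqref{hcur}. Since $\pi\circ\rho=\id_T$, the term $\pi\rho([X,X'])$ equals $[X,X']$; while for the bracket-of-forms term, the $K$-equivariance of $\pi$ together with \eqref{due} gives $\pi((\r^\gamma\trl\rho)(X'))=\varepsilon(\r^\gamma)X'$ and $\pi(\r^\gamma\trl\rho(X))=\r^\gamma\trl X$. Then, via the counit identity $(\varepsilon\otimes\id)\r=1$ and the braided antisymmetry of the bracket in $T$, both halves of $2\,\pi\lq\rho,\rho\rq(X,X')$ reduce to $[X,X']$; the two contributions cancel and so $\pi\Omega=0$. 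Braided antisymmetry is then built in: $\rho\circ[\,,\,]$ inherits it from $[\,,\,]$ on $T$, while $\lq\rho,\rho\rq$ is braided antisymmetric by \eqref{comm-gen}.

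The substantive part is right $B$-linearity, $\Omega(X,X'\cdot b)=\Omega(X,X')\cdot b$ for $b\in B$ (the balancing across $\otimes_B$ then follows by braided antisymmetry together with the same kind of computation). I would expand $[X,X'\cdot b]$ by rewriting $X'\cdot b=(\r_\alpha\trl b)(\r^\alpha\trl X')$ from \eqref{RAmodderA} and applying the braided Lie--Rinehart rule \eqref{blrp2}; using the quasi-triangularity identity \eqref{iii} one obtains $[X,X']\cdot b$ plus an ``anomaly'' built from $X(\r_\alpha\trl b)(\r^\alpha\trl X')$. Applying $\rho$ and exploiting its right $B$-linearity yields $\rho([X,X'])\cdot b$ plus $\rho$ of the anomaly. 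In parallel, expanding $\lq\rho,\rho\rq(X,X'\cdot b)$ by means of the adjoint action \eqref{action-hom} on $\rho$, the braided Leibniz rule \eqref{Der} for the derivation $\rho(X)$, and the identity $\rho(X)(b)=X(b)$ for $b\in B$, produces $\lq\rho,\rho\rq(X,X')\cdot b$ plus the opposite anomaly. The two anomalies should cancel after realigning the $\r$-matrix labels via \eqref{iiR}, \eqref{iii} and the resulting Yang--Baxter equation.

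The main obstacle is precisely this last realignment: the combinatorial bookkeeping required to commute $\r$-factors past the $B$-action, to reshuffle Sweedler-like indices on the $\r$-matrix, and to use the verticality of $\g$ (i.e.\ that $\g$ acts trivially on $B$) so that the two anomalies match exactly rather than only up to a lower-order correction. Once these three verifications are complete, $\Omega$ is a $\g$-valued braided two-form on $T$ in the sense of \S\ref{sec:sac}.
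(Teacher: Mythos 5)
Your plan follows the paper's proof essentially step for step: $\g$-valuedness via $\pi\circ\rho=\id_T$, $K$-equivariance of $\pi$ and unitality of $\r$; braided antisymmetry of both terms in \eqref{hcur}; and then the two parallel computations (well-definedness over $\ot_B$ and right $B$-linearity) in which the anomaly terms of the form $\rho(\r_\tau\trl X')\cdot\r^\tau\trl(X(b))$ coming from $\rho([\,\cdot,\cdot\,])$ and from $\lq\rho,\rho\rq$ cancel against each other. The $\r$-matrix realignment you flag as the main obstacle is precisely what the paper's proof carries out in \eqref{hbh1}--\eqref{hbh2} using \eqref{uno}, \eqref{due} and the right $B$-linearity of $k\trl\rho$, so nothing in your outline would need to change.
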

   }
  \begin{proof}
We first show that $\Omega$ is well-defined on $T\ot_B T$, that is $\Omega(X, b \, X') = \Omega(X \cdot b, X')$,  for $b\in B$ and $X,X' \in T$. Indeed, using  \eqref{blrp2}  and \eqref{RAmodderA}, we compute
\begin{align*}
 [ X,b  X'] 
& =  X(b) \, X' + (\r_\alpha\trl b) [\r^\alpha\trl X, X']  \\
& = (\r_\alpha \trl X') \cdot \r^\alpha \trl (X(b)) + \r_\beta \trl ([\r^\alpha\trl X, X'] ) \cdot (\r_\beta \r_\alpha\trl (b)) \\
& = (\r_\alpha \trl X') \cdot \r^\alpha \trl (X(b)) + [X, \r_\alpha \trl X']  \cdot (\r^\alpha\trl b). 
\end{align*}
and
\begin{align*}
[X \cdot b , X'] =  [X, \r_\alpha \trl X']  \cdot (\r^\alpha\trl b) + X \cdot (\r_\alpha \trl X') (\r^\alpha \trl b) . 
\end{align*} 
 Then, the right $B$-linearity of $\rho$ yields
 \beq\label{r-lin}
 \rho([X, b X']) - \rho([X \cdot  b, X']) = \rho(\r_\alpha \trl X') \cdot \r^\alpha \trl (X(b)) - \rho(X) \cdot (\r_\alpha \trl X') (\r^\alpha \trl b).
 \eeq
 
Next, using the expression in \eqref{etaeta} for the bracket: 
 \begin{align}\label{hbh1}
 2 \, \lq \rho, \rho \rq  (X, b X') &= \big[ \rho(\r_\alpha \trl X) + \r_\alpha \trl (\rho(X)), (\r^\alpha \trl \rho)(b X') \big]  \nn \\
 & = \big[ \rho(\r_\alpha \trl X) + \r_\alpha \trl (\rho(X)), (\r^\alpha \trl \rho)(\r_\tau \trl X') \big]  \cdot (\r^\tau \trl b) \nn \\
 & \quad + \r_\sigma \trl \big( (\r^\alpha \trl \rho)(\r_\tau \trl X') \big) \cdot \big( \r^\sigma \trl \big(\rho(\r_\alpha \trl X) + \r_\alpha \trl (\rho(X) ) \big) (\r^\tau \trl b)\big) 
 \nn \\
 & =  2 \, \lq \rho, \rho \rq  (X, \r_\tau X') \cdot (\r^\tau \trl b) \nn \\
   & \quad + 2 \, \r_\sigma \trl \big( (\r^\alpha \trl \rho)(\r_\tau \trl X') \big) \cdot \big( (\r^\sigma \r_\alpha \trl X) (\r^\tau \trl b)\big) \nn \\
 & =  2 \, \lq \rho, \rho \rq  (X, \r_\tau X') \cdot (\r^\tau \trl b) 
   + 2 \, \rho(\r_\tau \trl X') \cdot \r^\tau \trl (X(b)), 
 \end{align} 
using \eqref{uno} in the last but one equality. 
 \noindent
 While, using $(k \trl \rho) (X \cdot b) = (k \trl \rho) (X) \cdot b$, 
 \begin{align}\label{hbh2}
 2 \, \lq \rho, \rho \rq  (X \cdot b, X') &= \big[ \rho(\r_\alpha \trl (X \cdot b)) + \r_\alpha \trl (\rho(X \cdot b)), (\r^\alpha \trl \rho)(X') \big]  \nn \\
 &=\big[ \big( \rho(\r_\alpha \trl X) + \r_\alpha \trl (\rho(X)) \big) \cdot (\r_\beta \trl b), ( \r^\beta \r^\alpha \trl \rho)(X') \big]  \nn \\
&=\big[ \big( \rho(\r_\alpha \trl X) + \r_\alpha \trl (\rho(X)) \big) , \r_\lambda \trl \big( (\r^\beta \r^\alpha \trl \rho)(X') \big) \big]  \cdot (\r^\lambda \r_\beta \trl b) \nn \\
& \quad + \big( \rho(\r_\alpha \trl X) + \r_\alpha \trl (\rho(X)) \big) \cdot \r_\tau \trl \big((\r^\beta \r^\alpha \trl \rho)(X') \big) 
(\r^\tau \r_\beta \trl b) \nn \\
&=\big[ \big( \rho(\r_\alpha \trl X) + \r_\alpha \trl (\rho(X)) \big) , \r_\lambda \trl \big( (\r^\beta \r^\alpha \trl \rho)(X') \big) \big]  \cdot (\r^\lambda \r_\beta \trl b) \nn \\
& \quad + 2 \, \rho(X) \cdot (\r_\tau \trl X') (\r^\tau \trl b) , 
\end{align}
using \eqref{due}, in the last but one equality. 
Now, a direct computation shows that
\begin{align*}
\big[ \big( \rho(\r_\alpha \trl X) &+ \r_\alpha \trl (\rho(X)) \big) , \r_\lambda \trl \big( (\r^\beta \r^\alpha \trl \rho)(X') \big) \big]  \cdot (\r^\lambda \r_\beta \trl b) \\ 
&= \big[ \rho(\r_\alpha \trl X) + \r_\alpha \trl (\rho(X)), (\r^\alpha \trl \rho)(\r_\tau \trl X') \big]  \cdot (\r^\tau \trl b) \\ 
& = 2 \, \lq \rho, \rho \rq  (X, \r_\tau X') \cdot (\r^\tau \trl b) .
\end{align*}
With this, comparing \eqref{hbh1} and \eqref{hbh2}, we obtain
$$
\lq \rho, \rho \rq  (X, b X') - \lq \rho, \rho \rq  (X \cdot b, X') = \rho(\r_\tau \trl X')  \cdot \r^\tau \trl (X(b)) - \rho(X) \cdot (\r_\tau \trl X') (\r^\tau \trl b)
$$
which, when compared with \eqref{r-lin}, amounts to 
$\Omega(X, b \, X') - \Omega(X \cdot  b, X') = 0$.

\medskip
Next we show that $\Omega$ is right $B$-linear, that is $\Omega(X, X' \cdot b) = \Omega(X, X') \cdot b$.
From $[X, X' \cdot b] = [X, X' ] \cdot b + (\r_\alpha \trl X') \cdot(\r_\alpha \trl X) (b)$, the right $B$-linearity of $\rho$ gives
 \beq\label{rb}
 \rho([X, X' \cdot b]) = \rho([X, X' ]) \cdot b + \rho(\r_\alpha \trl X') \cdot(\r^\alpha \trl X) (b). 
 \eeq
On the other hand, 
recalling that the $K$-action defined in \eqref{action-hom-bis} closes on right $B$-linear maps, $(k \trl \rho) (X \cdot b) = (k \trl \rho) (X) \cdot b$ and using \eqref{due}, one computes 
 \begin{align*}
 2 \, \lq \rho, \rho \rq  (X, X' \cdot b ) &= \big[ \rho(\r_\alpha \trl X) + \r_\alpha \trl (\rho(X)), (\r^\alpha \trl \rho)(X' \cdot b) \big]  \nn \\
&= \big[ \rho(\r_\alpha \trl X) + \r_\alpha \trl (\rho(X)), (\r^\alpha \trl \rho)(X') \big]  \cdot b  \nn \\ 
& \quad + \r_\tau \trl \big( (\r^\alpha \trl \rho)(X') \big) \cdot \big(\r^\tau \trl \big( \rho(\r_\alpha \trl X) 
+ \r_\alpha \trl (\rho(X))\big)\big)(b) \nn \\
& = 
 2 \, \lq \rho, \rho \rq  (X, X') \cdot b + 2 \, \rho(\r_\tau \trl X') \cdot \big(\r^\tau \trl X)(b). 
\end{align*}
 A comparison with \eqref{rb} yields $\Omega(X, X' \cdot b) - \Omega(X, X') \cdot b = 0 $.
 
 \medskip
 Finally, both terms in \eqref{hcur} are braided antisymmetric and hence so is the curvature:
$\Omega(X,X') = -  \Omega( \r_\alpha\trl X', \r^\alpha\trl X )$.
\end{proof}  

The curvature can be given as $\g$-valued braided two-form on $P$ (the \textit{spatial} curvature):
\beq\label{bcur}
\Omega_{\vp}(Y,Y') :=  \Omega( Y^\pi, {Y'}^\pi ) , \qquad Y,Y' \in P .
\eeq
 This turns out to be \textit{basic}, that is $\Omega_{\vp}(Y,Y')=0$ whenever $Y$ or $Y'$ is vertical (an element of $\g$). 
Using \eqref{vcon} one also computes: 
$$
  \Omega_{\vp}(Y,Y') 
  = [\vp(Y), Y']  + [\r_\alpha \trl Y, (\r^\alpha \trl \vp) (Y')] -\vp([Y,Y'] ) - \lq \vp, \vp \rq (Y,Y').   
$$
This expression can be read as a {\it structure equation}:
\beq\label{streq}
d \vp = \Omega_{\vp} + \lq \vp, \vp \rq  \; .
\eeq
Here the $\g$-valued  two-form $\lq \vp, \vp \rq $ on $P$ is defined as
in \eqref{comm-gen}.  
 Also,  given  a $\g$-valued one-form $\eta$ on $P$, its exterior derivative $d \eta$ is the  
$\g$-valued  two-form on $P$ defined by 
 \begin{align}\label{der-1form}
 d \eta(Y, Y') 
& := [\r_\alpha \trl Y, (\r^\alpha \trl \eta) (Y')]  -[\r_\alpha \trl Y',  \r^\alpha \trl (\eta (Y))]  - \eta([Y,Y'] ) \nn \\
  &\:=2 \, \lq\id, \eta \rq  (Y,Y') - \eta([Y,Y'])
  \end{align}
for $Y,Y' \in P$.  
 The above 
is well-defined since $\g$, as mentioned, is an ideal in $P$ for the
braided commutator and braided antisymmetric by construction.  

One could construct an exterior algebra of $\g$-valued forms on $P$ by 
extending the definition of the exterior derivative $d$ to a form of any degree.  
For the sake of the present paper we just need it on one- and two-forms. 
For $\Phi$ a $\g$-valued two-form on $P$, its exterior derivative is given as
\begin{align}\label{d2}
d \Phi(X,Y,Z) &:= 
[\r_\lambda \trl X, (\r^\lambda \trl \Phi)(Y,Z)]  + \Phi(X, [Y,Z] ) + \mbox{b.c.p.} 
 \end{align}
for $X,Y, Z  \in P$, and $\mbox{b.c.p.}$ standing for braided cyclic permutations of $(X,Y,Z)$. 
The result is a $\g$-valued three-form on $P$.
Using this definition and \eqref{der-1form}, a lengthy and intricate computation that uses Jacobi identity 
and Yang--Baxter equation shows $d^2=0$.

 \subsection{The covariant derivative}   
Having a connection one can define the covariant derivative of (in particular) $\g$-valued braided forms on $P$. This uses the horizontal projection $h$ in \eqref{horproj}.
For a one-form $\eta$ we define
\begin{align}\label{cov1form}
D \eta(X,Y) 
   &:= \big([h(\r_\tau \trl X),  (\r^\tau \trl \eta)(Y)]  -  ( X \ot Y \to \r_\alpha \trl Y \ot  \r^\alpha \trl X ) \big) - \eta([X,Y]) \nn \\
   &\:= 2 \, \lq h, \eta \rq (X,Y) - \eta([X,Y]) , 
 \end{align}
while for a two-form $\Phi$  its covariant derivative is defined as  
\begin{align} \label{cov2form}
D \Phi(X,Y,Z) &:= \half [h(\r_\tau \trl X) + \r_\tau \trl (h(X)),  (\r^\tau \trl \Phi)(Y,Z)] + \Phi(X, [Y,Z]) + \mbox{b.c.p.} \, . \nn \\
&\:=  \half \big( \lq h , \Phi \rq  - \lq  \Phi, h \rq  \big) (X,Y,Z) 
+ \big(\Phi (X, [Y,Z] ) + \mbox{b.c.p.}\big)
\end{align}
using \eqref{etaPhieta} for the second equality. 
These definitions can be generalised to forms of any degree,  obtaining braided versions of the classical formulas \cite[eq.~(I.1)]{cat71}, 
\cite[eq.~(19)]{Fer02}.

From \eqref{bcur} and \eqref{hcur} the curvature is written in terms of the horizontal projection as 
\beq
\label{hcurh}
\Omega_\vp(Y,Y') = h([Y,Y']  ) - \lq h, h \rq (Y,Y').
\eeq
Then one shows the following proposition. 
   \begin{prop} \label{prop:bianchi}
  For the connection and the curvature there is a structure equation 
$$
\Omega_\vp =  D \vp + \lq \omega, \omega \rq 
$$
  and a Bianchi identity
\beq\label{bi0}
  D \Omega_\vp = \half \lq \r_\alpha \trl h , \lq \r^\alpha \trl h , h \rq  \rq .
\eeq
The covariant derivative $D \Omega_\vp$ vanishes for an equivariant connection.
\end{prop}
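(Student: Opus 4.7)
My plan is to address the three assertions of the proposition --- the structure equation, the Bianchi identity, and the vanishing of $D\Omega_\vp$ for equivariant connections --- in sequence.

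For the structure equation $\Omega_\vp = D\vp + \lq\vp, \vp\rq$, I would exploit the identity $d\vp = \Omega_\vp + \lq\vp, \vp\rq$ already established in \eqref{streq}. Comparing the definitions \eqref{der-1form} of the exterior derivative and \eqref{cov1form} of the covariant derivative on an arbitrary $\g$-valued one-form $\eta$ one has $d\eta = 2\lq\id, \eta\rq - \eta\circ[\cdot, \cdot]$ and $D\eta = 2\lq h, \eta\rq - \eta\circ[\cdot, \cdot]$. Subtracting and using $h = \id - \vp$ together with the $\bbK$-bilinearity of $\lq\cdot, \cdot\rq$ in its first slot yields the pointwise identity
\[
D\eta - d\eta \;=\; -2\lq\vp, \eta\rq.
\]
Specialising to $\eta = \vp$ and substituting \eqref{streq} gives $D\vp = \Omega_\vp - \lq\vp, \vp\rq$, which rearranges to the desired structure equation.

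For the Bianchi identity $D\Omega_\vp = \tfrac{1}{2}\lq\r_\alpha \trl h, \lq\r^\alpha \trl h, h\rq\rq$, I would apply the covariant derivative formula \eqref{cov2form} directly to $\Omega_\vp$ written as $\Omega_\vp = h \circ [\cdot, \cdot] - \lq h, h\rq$ from \eqref{hcurh}. The ``$\Phi(X, [Y, Z]) + \mathrm{b.c.p.}$'' term in \eqref{cov2form}, after substitution, produces the contribution $h\big([X, [Y, Z]] + \mathrm{b.c.p.}\big)$, which vanishes by linearity of $h$ and the braided Jacobi identity in $P$. The remaining bracket-of-forms terms are, by the bilinearity of $\lq\cdot, \cdot\rq$ in each slot and the identity $h \circ [\cdot, \cdot] = \Omega_\vp + \lq h, h\rq$, reduced to expressions involving $\lq h, \lq h, h\rq\rq$ and $\lq\lq h, h\rq, h\rq$. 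The braided Jacobi identity \eqref{jacobi-forms} applied to $(\omega, \eta, \varphi) = (h, h, h)$ then reads $\lq h, \lq h, h\rq\rq = \lq\lq h, h\rq, h\rq - \lq\r_\alpha \trl h, \lq\r^\alpha \trl h, h\rq\rq$, and this allows the recombination into the asymmetry term on the right-hand side of the Bianchi identity. The main obstacle is the careful $\r$-matrix bookkeeping, invoking Yang--Baxter and triangularity to bring the various bracket-and-cyclic-sum expressions into a common form.

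Finally, when $\rho$ is $K$-equivariant, $h = \rho\circ\pi$ and $\vp = \id - h$ are $K$-equivariant, i.e., $k \trl h = \varepsilon(k)\, h$ for all $k \in K$. The right-hand side of the Bianchi identity then collapses, using $(\varepsilon \otimes \varepsilon)(\r) = 1$, to $\tfrac{1}{2}\lq h, \lq h, h\rq\rq$. The equivariance of $h$ trivialises the braidings in the definitions \eqref{comm-gen} and \eqref{etaPhi}, so $\lq h, h\rq(Y, Y') = [h(Y), h(Y')]$ and
\[
\lq h, \lq h, h\rq\rq(X, Y, Z) \;=\; [h(X), [h(Y), h(Z)]] + \mathrm{c.p.},
\]
which vanishes by the (now unbraided) Jacobi identity in the Lie algebra $P$. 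Hence $D\Omega_\vp = 0$ in this case.
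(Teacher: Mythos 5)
Your proof is correct and follows essentially the same route as the paper's: the structure equation via the decomposition $\id = h + \vp$, additivity of $\lq\cdot,\cdot\rq$ in its first slot and \eqref{streq}; the Bianchi identity by substituting \eqref{hcurh} into \eqref{cov2form}, killing the $h([X,[Y,Z]])+\mbox{b.c.p.}$ terms with the Jacobi identity, observing the cancellation that leaves $-\half\big(\lq h,\lq h,h\rq\rq - \lq\lq h,h\rq,h\rq\big)$, and converting this with \eqref{jacobi-forms}. One small correction to the final step: equivariance of $h$ removes only the $\r$-factors acting on $h$ itself, not those permuting the arguments, so $\lq h,\lq h,h\rq\rq(X,Y,Z)$ reduces to the \emph{braided} cyclic sum $[h(X),[h(Y),h(Z)]]+\mbox{b.c.p.}$, which vanishes by the braided (not the unbraided) Jacobi identity in $P$ --- the naive cyclic sum would not vanish in general.
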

\begin{proof}
From the structure equation \eqref{streq} and \eqref{der-1form}, and 
from the definition \eqref{cov1form} of the covariant derivative, we have
\begin{align*}
\Omega_\vp = 2 \, \lq \id, \omega \rq  - \omega\circ [~, ~] - \lq \omega, \omega \rq = 
2 \lq h, \omega \rq  - \omega\circ [~, ~] + \lq \omega, \omega \rq
= D\omega + \lq \omega, \omega \rq .
\end{align*}

Then, 
for $X,Y,Z \in P$, from definitions \eqref{cov2form} and \eqref{hcurh} one computes
\begin{align*}
D \Omega_{\vp}(X,Y,Z)  & = \half \big[h(\r_\gamma \trl X) + \r_\gamma \trl (h( X)), (\r^\gamma \trl h) ([Y, Z]) \big]
   \\
 & \quad - 
 \half \big[h(\r_\gamma \trl X) + \r_\gamma \trl (h( X)), (\r^\gamma \trl \lq h , h \rq ) (Y \ot Z) \big]
 \\ 
 & \quad + h ([X, [Y,Z]  ] )    - \lq h , h \rq  (X \ot [Y,Z] ) + \mbox{b.c.p.} \, .
\end{align*}
The term $h ([X, [Y,Z]  ] ) + \mbox{b.c.p.}$ vanishes by Jacobi identity while the first and fourth terms cancel each other. 
The remaining one, the second term, then gives
$$
D \Omega_{\vp} = 
- \half \big( \lq h , \lq h , h \rq  \rq  - \lq \lq h, h \rq , h  \rq  \big) 
= \half \lq \r_\alpha \trl h , \lq \r^\alpha \trl h , h \rq  \rq. 
$$
The second equality follows from the Jacobi identity \eqref{jacobi-forms}.
When the connection is equivariant, $ k \trl h = \varepsilon(k) h$ and the right-hand side of 
\eqref{bi0} vanishes by Jacobi identity, 
$$
\big( \lq h , \lq h , h \rq  \rq  \big) (X,Y,Z) = [ h(X) , [ h(Y) , h(Z) ]  ]  + \mbox{b.c.p.} = 0 .
$$
Then the relation in \eqref{bi0} is the usual Bianchi identity. 
\end{proof} 
 An easy computation also gives that the covariant derivative of the curvature as in \eqref{cov2form} can be written as
$$
D \Omega_\vp = d \Omega_{\vp} - \half ( \lq \omega , \Omega_{\vp} \rq  - \lq \Omega_\vp, \omega \rq ).
$$
 
\begin{rem}
Our result in \eqref{bi0} compares with a similar one in
\cite[\S~5.4]{RZ21}. In the (dual) notation of \cite{Du} it says that
our connection is regular but not necessarily multiplicative.  
\end{rem}
 
\subsection{The space of connections and the gauge transformations} \label{se:scgt}
The space $C(T,\g)$ of connections on the sequence \eqref{as-gen} is an affine space modelled on the linear space of  right $B$-module maps $\eta : T \to \g$ (the $\g$-valued one-forms on $B$). Indeed, given a connection $\rho : T \to P$ and such a map $\eta : T \to \g$, one has
 $$
\pi \circ (\rho + \eta)  =\pi \circ \rho = \id_T  
$$
and thus $\rho' = \rho + \eta$ is a connection as well, with 
$\omega_{\rho'} = \omega_{\rho} - \eta \circ \pi$.
In the examples of the present paper
we shall use this decomposition with $\rho$ a $K$-equivariant connection. 

The space of connections $C(T,\g)$ is a subset of the $K$-module $\Hom_B(T, P)$. The latter carries an action of the braided Lie algebra $P$ (cf. \cite[\S 5.3]{pgc-main}). 

\begin{prop}\label{prop:actPcon}
The map $\delta: P \ot \Hom_B(T, P) \to \Hom_B(T, P)$ given by
\beq\label{actPcon}
(\delta_Y \rho) (X) := [Y, \rho(X)] - (\r_\alpha\trl \rho) ([ \r^\alpha \trl Y^\pi, X] ) 
\eeq
is an action of the $K$-braided Lie algebra $P$, that is 
\beq\label{K-delta-rho}
k \trl (\delta_Y \rho) = \delta_{\one{k}\trl Y} (\two{k} \trl \rho) 
\eeq
and for $Y,Y' \in P$,  
\beq\label{Pact}
\delta_{[Y,Y'] } = \delta_Y \circ \delta_{Y'} - \delta_ {\r_\alpha \trl Y'} \circ \delta_{\r^\alpha \trl Y}.
\eeq
\end{prop}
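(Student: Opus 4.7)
My plan is to verify the three required properties in sequence: well-definedness of $\delta_Y \rho$ as a right $B$-linear map, the $K$-equivariance \eqref{K-delta-rho}, and the braided Jacobi-type identity \eqref{Pact}. The first two should be essentially formal checks, while the third will require careful bracket gymnastics.

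\textbf{Well-definedness.} I would first show that $\delta_Y \rho$ lies in $\Hom_B(T, P)$. Using $[Y, \rho(X \cdot b)]$ with $b \in B$, $X \in T$, and the fact that $\rho$ is right $B$-linear together with the Leibniz-type rule \eqref{moduloLie} for the bracket of $P$ over $B$, one checks that the ``Leibniz remainder'' produced by $[Y, \rho(X)\cdot b]$ is cancelled by the corresponding remainder in $(\r_\alpha \trl \rho)([\r^\alpha \trl Y^\pi, X \cdot b])$, using the analogous identity \eqref{blrp2} for $T$. This is a direct unwinding, using $\rho(X)(b)=X(b)$ for any splitting and \eqref{uno}.

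\textbf{$K$-equivariance.} Using the definition \eqref{action-hom-bis} of the $K$-action on $\Hom_B$, and the equivariance of the bracket on $P$, of $\pi$, and of the evaluation pairing, I would compute
\begin{align*}
(k\trl(\delta_Y\rho))(X) & = \one{k}\trl\big((\delta_Y\rho)(S(\two{k})\trl X)\big) \\
& = \one{k}\trl\Big([Y,\rho(S(\two{k})\trl X)] - (\r_\alpha\trl\rho)([\r^\alpha\trl Y^\pi, S(\two{k})\trl X])\Big),
\end{align*}
and then drag $\Delta(k)$ across the bracket and across $\rho$ using the $K$-equivariance of $[,]$ on $P$ together with the Hopf algebra axioms $\one{k}S(\two{k}) = \varepsilon(k)1$. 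The two resulting summands assemble into $\delta_{\one{k}\trl Y}(\two{k}\trl\rho)(X)$; quasitriangularity of $\r$ with respect to $\Delta^{cop}$ is what allows one to route $\two{k}$ past the $\r$-matrix cleanly.

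\textbf{Braided Jacobi property.} This is the main obstacle. I would expand $(\delta_Y \circ \delta_{Y'})(\rho)(X)$ as a sum of four terms obtained by applying the definition twice, and likewise $(\delta_{\r_\alpha\trl Y'}\circ\delta_{\r^\alpha\trl Y})(\rho)(X)$; the difference should reproduce
$$(\delta_{[Y,Y']}\rho)(X) = [[Y,Y'],\rho(X)] - (\r_\alpha\trl\rho)([\r^\alpha\trl[Y^\pi,{Y'}^\pi], X]),$$
using that $\pi$ is a braided Lie algebra morphism, so $[Y,Y']^\pi = [Y^\pi,{Y'}^\pi]$. The ``purely nested'' terms $[Y,[Y',\rho(X)]]-[\r_\alpha\trl Y',[\r^\alpha\trl Y,\rho(X)]]$ collapse onto $[[Y,Y'],\rho(X)]$ by the braided Jacobi identity for $P$. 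The ``purely basal'' terms involving doubly bracketed entries in $T$ collapse, by the braided Jacobi identity for $T$ applied after transporting one $\r$-factor via the Yang--Baxter equation, to the single term $-(\r_\alpha\trl\rho)([\r^\alpha\trl[Y^\pi,{Y'}^\pi],X])$. The crucial check is the cancellation of the two \emph{mixed} cross-terms, each of the form $[Y,(\r_\beta\trl\rho)([\r^\beta\trl {Y'}^\pi,X])]$ and its $\r$-transpose: these cancel upon applying the $K$-equivariance of the bracket in the form $[Y,\r_\beta\trl\rho(\cdots)] = \r_\gamma\trl[\br^\gamma\cdot Y,\cdots]$, which shifts the braiding onto $Y$ and meets its mate on the other side after using the Yang--Baxter relation on the $\r$-factors originating from the outer and inner applications of $\delta$.

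The bookkeeping of the $\r$-indices is what I expect to be most delicate; beyond that, the calculation is driven entirely by braided Jacobi in $P$ and $T$, the morphism property of $\pi$, and \eqref{iii}.
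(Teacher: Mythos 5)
Your proposal is correct and follows essentially the same route as the paper: equivariance \eqref{K-delta-rho} from quasi-cocommutativity and \eqref{action-hom-bis}, and then \eqref{Pact} by expanding both compositions into four terms each, with the nested terms collapsing by braided Jacobi in $P$, the doubly-based terms collapsing by Yang--Baxter plus braided Jacobi in $T$ (and the morphism property of $\pi$), and the two mixed cross-terms cancelling pairwise — in the paper this last cancellation is immediate because \eqref{K-delta-rho} is used to expand $\r_\alpha\trl\delta_{Y'}\rho$, making the mixed terms literally identical rather than requiring the extra Yang--Baxter shuffle you anticipate. Your additional check that $\delta_Y\rho$ is right $B$-linear is a sound (and welcome) verification that the paper leaves implicit.
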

\begin{proof}
Formula \eqref{K-delta-rho} follows from the quasi-cocommutativity \eqref{iiR} and the explicit expression \eqref{action-hom-bis} for the $K$-action on morphisms. 
Then, using formula \eqref{K-delta-rho} we compute  
\begin{align*}
(\delta_Y  (\delta_{Y'} \rho)) (X) 
& = [Y, \delta_{Y'} \rho(X)] - (\r_\alpha\trl \delta_{Y'} \rho) ([ \r^\alpha \trl Y^\pi, X])  
\\
& = [Y, [Y', \rho(X)]] 
- [Y, (\r_\gamma\trl \rho) ([ \r^\gamma \trl Y'^\pi, X])]
\\
& \quad
- [\r_\alpha\trl Y', (\r_\beta \trl \rho)([ \r^\beta\r^\alpha \trl Y^\pi, X])  ] 
\\
& \quad
+ (\r_\gamma \r_\beta \trl \rho) ([ \r^\gamma \r_\alpha\trl Y'^\pi, [ \r^\beta\r^\alpha \trl Y^\pi, X]])  .
\end{align*}
This also yields
\begin{align*}
(\delta_{\r_\alpha \trl Y'}  (\delta_{\r^\alpha \trl Y} \rho)) & (X) 
= [\r_\alpha \trl Y', [\r^\alpha \trl Y, \rho(X)]] 
- [\r_\alpha \trl Y', (\r_\gamma\trl \rho) ([ \r^\gamma \r^\alpha \trl  Y^\pi, X])]
\\
& 
- [ Y, (\r_\beta \trl \rho)([ \r^\beta  \trl  Y'^\pi, X])  ] 
+ (\r_\gamma \r_\beta \trl \rho) ([ \r^\gamma   \trl Y^\pi, [ \r^\beta   \trl Y'^\pi, X]])  \; .
\end{align*}
These two expressions have two terms in common so
their difference is given by
\begin{align*}
\delta_Y  (\delta_{Y'} \rho)& (X) - \delta_{\r_\alpha \trl Y'}  (\delta_{\r^\alpha \trl Y} \rho) (X) 
\\
& = [Y, [Y', \rho(X)]] - [\r_\alpha \trl Y', [\r^\alpha \trl Y, \rho(X)]] 
\\
& \quad
+ (\r_\gamma \r_\beta \trl \rho) \Big([ \r^\gamma \r_\alpha\trl Y'^\pi, [ \r^\beta\r^\alpha \trl Y^\pi, X]]- [ \r^\gamma   \trl Y^\pi, [ \r^\beta   \trl Y'^\pi, X]] \Big) 
\\
& = [Y, [Y', \rho(X)]] - [\r_\alpha \trl Y', [\r^\alpha \trl Y, \rho(X)]] 
\\
& \quad
+ (\r_\gamma \r_\beta \trl \rho) \Big([  \r_\alpha  \r^\beta\trl Y'^\pi, [\r^\alpha \r^\gamma \trl Y^\pi, X]]- [ \r^\gamma   \trl Y^\pi, [ \r^\beta   \trl Y'^\pi, X]] \Big) 
\end{align*}
using Yang--Baxter equation for the second summand. Finally Jacobi identity gives 
\begin{align*}
\delta_Y  (\delta_{Y'} \rho)(X) - \delta_{\r_\alpha \trl Y'}  (\delta_{\r^\alpha \trl Y} \rho) (X)
& = [[Y, Y'], \rho(X)] 
- ( \r_\beta \trl \rho) \Big( [ \r^\beta   \trl [     Y^\pi, Y'^\pi], X] \Big) 
 \end{align*}
which is just $(\delta_{[Y,Y']} \rho) (X)$ and coincides with the right hand side of \eqref{Pact}.
\end{proof}

The braided Lie algebra action of $P$ on $\Hom_B(T, P)$ gives rise to a map 
$$
P \times C(T,\g) \longrightarrow C(T,\g) , \quad (Y, \rho)  \mapsto \rho' = \rho + \delta_Y \rho .
$$
Indeed, from $\pi \circ \rho = \id_T$,  it follows that $\pi \circ (\delta_Y \rho) = 0$ and so $\delta_Y \rho: T \to \g$.

Proposition \eqref{prop:actPcon} can be generalised.
Let $M$ be a right $B$-module with a compatible $P$-action $\trl_{\scriptscriptstyle{P}}$. Then 
there is a $P$-action  $\delta: P \ot \Hom_B(M, P) \to \Hom_B(M, P)$,
\beq
(\delta_Y \Phi) (m) := [Y, \Phi(m)] - (\r_\alpha\trl \Phi ) \big( (\r^\alpha \trl Y^\pi ) \trl_{\scriptscriptstyle{P}} m \big) . 
\eeq
Since the $P$-action on $T\ot T$ is  by braided derivations, when $\Phi$ is a $P$-valued two-form on $T$, the above becomes 
\begin{align}\label{var-phi-gen}
(\delta_Y \Phi) (X, X') 
= [Y, \Phi(X, X')]  
& - (\r_\alpha\trl \Phi) ([ \r^\alpha\trl Y^\pi, X] , X')  
\\ \nn &  
 -(\r_\alpha\trl \Phi)\big([ \r_\lambda \trl X, [\r^\lambda  \r^\alpha\trl Y^\pi, X']  \big) .  
\end{align} 

\begin{prop} 
The variation $\delta_Y \Omega$ of the curvature  of the connection $\rho$ {for the action of an element $Y \in P$}, as defined in \eqref{var-phi-gen}, explicitly reads 
$$
\delta_Y \Omega = (\delta_Y \rho) \circ [~,~ ] -  \lq \delta_Y \rho, \rho \rq - \lq \r_\alpha \trl \rho, \delta_{\r^\alpha \trl Y}  \rho \rq .
$$
\end{prop}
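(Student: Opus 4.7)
The plan is to exploit linearity of $\delta_Y$ by splitting the curvature as in \eqref{hcur},
$$
\Omega \;=\; \rho \circ [~,~] \;-\; \lq \rho, \rho \rq,
$$
so that $\delta_Y \Omega = \delta_Y(\rho \circ [~,~]) - \delta_Y \lq \rho, \rho \rq$. The statement then reduces to two identities: first
$$
\delta_Y (\rho \circ [~,~]) \;=\; (\delta_Y \rho) \circ [~,~],
$$
and second a braided Leibniz rule
$$
\delta_Y \lq \rho, \rho \rq \;=\; \lq \delta_Y \rho, \rho \rq \;+\; \lq \r_\alpha \trl \rho, \delta_{\r^\alpha \trl Y} \rho \rq.
$$
Subtracting produces exactly the formula in the statement.

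For the first identity I would view $\rho \circ [~,~]$ as a $P$-valued braided two-form on $T$ and apply \eqref{var-phi-gen}. The $K$-equivariance of the bracket on $T$ gives $k \trl (\rho \circ [~,~]) = (k \trl \rho) \circ [~,~]$, so the second and third terms of \eqref{var-phi-gen} read
$(\r_\alpha \trl \rho)([[\r^\alpha \trl Y^\pi, X], X'])$ and $(\r_\alpha \trl \rho)([\r_\lambda \trl X, [\r^\lambda \r^\alpha \trl Y^\pi, X']])$. The braided Jacobi identity in $T$ collapses their sum into $(\r_\alpha \trl \rho)([\r^\alpha \trl Y^\pi, [X, X']])$, and comparing with \eqref{actPcon} one recognises the result as $(\delta_Y \rho)([X,X'])$.

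The main obstacle is the Leibniz identity for $\lq,\rq$. I would expand $\delta_Y \lq\rho,\rho\rq(X,X')$ through \eqref{var-phi-gen}, using $k \trl \lq \rho, \rho \rq = \lq \one{k} \trl \rho, \two{k} \trl \rho \rq$ (the $K$-equivariance of the bracket noted after \eqref{Phieta}) to distribute the $\r$-matrix factors, and then \eqref{comm-gen} to reduce every contribution to commutators in $P$ of the shape $[Y, [\rho(\cdot), \rho(\cdot)]]$, $[\rho(\cdot), \rho(\cdot)]$ with intertwined derivations. On the right hand side I would similarly expand $\lq \delta_Y \rho, \rho \rq$ and $\lq \r_\alpha \trl \rho, \delta_{\r^\alpha \trl Y} \rho \rq$ via \eqref{comm-gen} and unfold the inner $\delta$-terms through \eqref{actPcon}, invoking the quasi-cocommutativity \eqref{iiR} and the equivariance \eqref{K-delta-rho} to move the action of $\r^\alpha \trl Y$ past the outer $\r$-matrix factors. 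The braided Jacobi identity in $P$ applied to $[Y, [\rho(\cdot), \rho(\cdot)]]$ produces exactly the ``inner'' contributions that match $\lq \delta_Y \rho, \rho \rq + \lq \r_\alpha \trl \rho, \delta_{\r^\alpha \trl Y}\rho\rq$, while the extra commutator terms arising from the derivation action of $Y^\pi$ on $X \ot X'$ cancel against the $(\r_\alpha \trl \rho)([\r^\alpha \trl Y^\pi, \cdot])$-type pieces after several applications of the Yang--Baxter equation to realign the $\r$-matrix indices.

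The hard part is not the conceptual structure, which is simply that $\delta_Y$ acts on $P$-valued forms by a braided derivation both with respect to $\circ[~,~]$ and to $\lq,\rq$, but the index bookkeeping in step two: braided Jacobi in $P$ and the Yang--Baxter equation must be combined repeatedly, and one has to carefully track how the adjoint $K$-action on $\rho$ interacts with the action $\delta_{\r^\alpha\trl Y}$ via \eqref{K-delta-rho}. Once both identities are in place their difference gives
$$
\delta_Y \Omega \;=\; (\delta_Y \rho) \circ [~,~] \;-\; \lq \delta_Y \rho, \rho \rq \;-\; \lq \r_\alpha \trl \rho, \delta_{\r^\alpha \trl Y} \rho \rq,
$$
as claimed.
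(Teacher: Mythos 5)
Your proposal is correct and follows essentially the same route as the paper, whose entire proof is the one-line observation that the identity follows from linearity of $\delta_Y$ and its braided derivation rule; you have simply unpacked what that means (the splitting $\Omega=\rho\circ[~,~]-\lq\rho,\rho\rq$, the identity $\delta_Y(\rho\circ[~,~])=(\delta_Y\rho)\circ[~,~]$ via braided Jacobi in $T$, and the braided Leibniz rule for $\lq~,~\rq$). Your verification of the first identity is complete and your sketch of the Leibniz rule is the intended argument, so there is no substantive divergence from the paper.
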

\begin{proof}
This follows from linearity of the action $\delta_Y$ and from its braided derivation rule. 
 \end{proof}
{
In analogy with the classical case, an element $V \in  \g$ acts on a connection and on the corresponding curvature as an infinitesimal gauge transformation:
\begin{cor}
The variation \eqref{actPcon} of a connection $\rho$ for the action of a vertical element $V\in \g$ is   given by
$$
(\delta_V \rho) (X) = [V, \rho(X)]  \, ,
$$
while the variation \eqref{var-phi-gen} of the curvature  is  
$$
(\delta_V \Omega) (X, X') = [V, \Omega(X, X')]  .
$$
\end{cor}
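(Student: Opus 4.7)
The plan is to observe that the extra correction terms in the general formulas \eqref{actPcon} and \eqref{var-phi-gen} vanish identically when the acting element is vertical, so that only the adjoint bracket term survives. The key input is the exactness of the sequence \eqref{as-gen} at $P$: identifying $\g$ with $\imath(\g)\subset P$, one has $\g=\ker\pi$, hence $V^\pi=\pi(V)=0$ for every $V\in\g$.

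First I would apply \eqref{actPcon} with $Y=V\in\g$. The second term reads $(\r_\alpha\trl\rho)([\,\r^\alpha\trl V^\pi,X\,])$; since $V^\pi=0$ and the $K$-action is linear, $\r^\alpha\trl V^\pi=0$, so the bracket and hence the whole correction vanishes. This directly gives $(\delta_V\rho)(X)=[V,\rho(X)]$. (Equivalently, one can invoke $K$-equivariance of $\pi$ to write $\r^\alpha\trl V^\pi=\pi(\r^\alpha\trl V)$, which still vanishes because $\g$ is a $K$-submodule of $P$ contained in $\ker\pi$.)

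Next I would apply \eqref{var-phi-gen} to $\Phi=\Omega$ and $Y=V\in\g$. The two correction terms involve $[\r^\alpha\trl V^\pi,X]$ and $[\r^\lambda\r^\alpha\trl V^\pi,X']$ respectively; both brackets vanish by the same argument as above, leaving only $(\delta_V\Omega)(X,X')=[V,\Omega(X,X')]$. No computation beyond substitution and the observation $V^\pi=0$ is required, so I do not anticipate any obstacle; the corollary is an immediate specialisation of the preceding proposition.
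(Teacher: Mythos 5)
Your proposal is correct and is exactly the argument the paper intends: the corollary is stated without proof precisely because, for $V\in\g=\ker\pi$, one has $V^\pi=0$ and the correction terms in \eqref{actPcon} and \eqref{var-phi-gen} vanish by linearity (or, as you note, by $K$-equivariance of $\pi$ together with $\g$ being a $K$-submodule). Nothing is missing.
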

Then in view of \eqref{Pact}, $\g$ is the braided Lie algebra of 
infinitesimal gauge transformations. The universal enveloping algebra $\U(\g)$ is the braided Hopf algebra of such transformations \cite[\S 5.3]{pgc-main}. }

\begin{rem}\label{lem:var-curv}
Let $\Omega'$ be the curvature of the transformed connection 
$\rho' = \rho + \delta_Y \rho$, for $Y\in P$. Since the action $\delta_Y$ is a braided derivation, the variation  $\delta_Y\Omega$ 
differs from the first order term in the difference $\Omega'-\Omega$ (which by construction is a derivation):
\beq\label{diff-om}
\Omega' - \Omega = \delta_Y \Omega + \lq  \r_\beta \trl \rho, \delta_{\r^\beta \trl Y} \rho \rq  - \lq  \rho, \delta_Y \rho \rq + \lq \delta_Y \rho, \delta_Y \rho \rq  . 
\eeq
\end{rem}

When the splitting $\rho$ is $K$-equivariant, $k \trl \rho = \varepsilon(k) \rho$, the variations of the connection and of the curvature reduce to 
\begin{align}
(\delta_Y \rho) (X) &= [Y, \rho(X)]   - \rho([   Y^\pi, X] )   
\nn \\[.2cm]
\delta_Y \Omega ( X,X')& =
[Y, \Omega( X,X')]   - \Omega([Y^\pi, X]   , X')
- \Omega(\r_\alpha \trl X, [\r^\alpha \trl  Y^\pi, X']  ) .
\label{var-cur-eq}
\end{align}
 Moreover the extra term in the right-hand side of \eqref{diff-om} vanishes.

\section{Atiyah sequences for Hopf--Galois extensions}\label{sec:AS-HG}
 
Recall that an algebra $A$ is a right $H$-comodule algebra for a Hopf algebra $H$ 
if it carries a right coaction $\delta: A \to A \ot H$ which is a morphism of algebras.  We write $\delta(a)= \zero{a} \ot \one{a}$ in Sweedler notation with an implicit sum.   The subspace of coinvariants $B:= A^{coH}=\big\{b\in A ~|~ \delta (b) = b \otimes 1_H \big\}$ is a subalgebra of $A$.  
There is a canonical  map
$$
 \chi := (m \ot \id) \circ (\id \otimes_B \delta ) : A \otimes_B A  \longrightarrow A \ot H~  ,  \quad a' \ot_B a \longmapsto a' a_{\;(0)} \ot a_{\;(1)} .
$$ 
The algebra extension $B\subseteq  A$ is called  an $H$-Galois extension if this map is bijective. 
 
In the present paper we deal with $H$-Galois extensions  which are
$K$-equivariant.
That is $A$ also carries  a left $K$-action $\trl \, : K\otimes A \to A$, for $K$  a  Hopf algebra,  that commutes with the right $H$-coaction,
$ \delta \, \circ \trl = (\trl \ot \id) \circ (\id\otimes \delta)$ (the coaction $\delta$ is a $K$-module map where $H$ has trivial $K$-action).
On elements  $k\in K, a\in A$,
$$
\zero{(k \trl a)} \ot \one{(k \trl a)} = ( k \trl \zero{a} ) \ot \one{a} \; .
$$

Given a $K$-equivariant Hopf--Galois
extension $B=A^{co H} \subseteq A$, with triangular Hopf algebra $(K,\r)$, the Lie algebra $\Der{A}$ of $K$-braided derivations of $A$ has two distinguished Lie subalgebras.  Firstly, the Lie subalgebra of braided derivations that are $H$-equivariant  (that is $H$-comodule maps), 
\begin{align}\label{LieGG}
\DerH{A} = \{X \in \Der{A} \;|\;  \delta (X(a))=X(\zero{a})\otimes \one{a} \, , \, \,  a \in A \}  
\end{align}
and then its Lie subalgebra of vertical derivations  
$$
\aut{A} := \{X\in \DerH{A} \; | \; X(b)=0 \, ,  \, \,  b\in B\}~. 
$$
The linear spaces $\DerH{A}$ and  $\aut{A}$ are $K$-braided Lie subalgebras of $\Der{A}$,  \cite[Prop. 7.2]{pgc-main}. 
Elements of $\aut{A}$ are regarded  as infinitesimal gauge
transformations of the $K$-equivariant Hopf--Galois extension $B=A^{co H} \subseteq A$, \cite[Def. 7.1]{pgc-main}.

Each  derivation in $\DerH{A}$, being $H$-equivariant, restricts to a derivation on the subalgebra of coinvariant elements $B=A^{co H} $. Thus, associated to $B=A^{co H} \subseteq A$, there is the
sequence of braided Lie algebras
$\aut{A} \to \DerH{A} \to \DerR{B} $. 
When exact,
\beq\label{as}
0 \to \aut{A} \to \DerH{A} \to \DerR{B}   \to 0 
\eeq
 is a version of the Atiyah sequence of a (commutative) principal fibre bundle.

 {
When  the $K$-module algebra $B$ is quasi-central in $A$, see \eqref{qcAB}, 
the  braided Lie algebras  in the sequence are also $B$-bimodules with module structures as in \eqref{LAmodderA} and \eqref{RAmodderA}.   The above is then a sequence of 
braided Lie--Rinehart pairs as in \S \ref{se:seqrh}. 
}

 As studied in the previous section, a connection on the bundle can be given as an $H$-equivariant splitting of the sequence, associating to a derivation $X$ in $\DerR{B} $ a unique horizontal derivation in $\DerH{A} $ projecting onto $X$. The curvature of the connection  measures the extend to which this map fails to be a braided Lie algebra morphism.  
 
An example of this construction with the study of moduli spaces of connections is given in the next two subsections. In \S\ref{sec:riem-geo} we present  a sequence for the orthogonal frame bundle over the sphere $S^{2n}_\theta$ with a splitting which  leads  to the Levi-Civita connection. 

\subsection{The sequence for the $SU(2)$-bundle over the sphere $S^{4}_\theta$} \label{sec:ASib}
 
This section is devoted to the Atiyah sequence of braided Lie algebras associated with the
$\O(SU(2))$ 
Hopf--Galois extension $\O(S^{4}_\theta) \subset \O(S^{7}_\theta)$ constructed in \cite{LS0}
and related connections. 

Let $\theta \in \IR$.
The $*$-algebra $\mathcal{O}(S^7_\theta)$ has generators $z_r, z_r^*$, $r=1,2,3,4$ with commutation relations determined by the action of a $2$-torus.  
For $K$ the Hopf algebra generated by two commuting elements $H_1, H_2$,  the generators $z_r$  are eigenfunctions for 
the action of $H_1$, $H_2$ of eigenvalues 
 $\mu^r= (\mu^r_1, \mu^r_2)= \frac{1}{2}(1, -1), \, \frac{1}{2}(-1, 1), \, \frac{1}{2}(-1, -1),
 \, \frac{1}{2}(1, 1)$, for $r=1,2,3,4$, and so are their $*$-conjugated $z_r^*$ with  eigenvalues $-\mu^r$.
 Then, the commutation relations among the $z_r$ read
\beq\label{zcr}
 z_r  \dott z_s =  \lambda^{ 2 r \wedge s}  z_s  \dott z_r  
 \, , \qquad 
 \lambda = e^{-\pi  i\theta} 
  \, , \qquad 
  r \wedge s  := \mu^r_1 \mu^s_2 - \mu^r_2 \mu^s_1  \,,
\eeq
for $r,s= \pm 1, \pm 2, \pm 3, \pm4$ and $z_{-r}:=z_r^*$.
 In addition, the generators satisfy a sphere relation
 $z_1\dott z^*_1+z_2\dott z^*_2+z_3\dott z^*_3+z_4\dott z^*_4=1$.  
   
   The algebra $\mathcal{O}(S^7_\theta)$  is a right
$\O(SU(2))$-comodule algebra with right coaction which is defined  on the algebra generators as
\begin{eqnarray}\label{princ-coactSU2}
\delta:   \O(S^7_\theta) &\longrightarrow & 
\O(S^7_\theta) \ot \O(SU(2))
\\ \nn
\, \mathsf{u}
 &\longmapsto&
\mathsf{u}
\overset{.}{\otimes}
\mathsf{w}
\;, \quad 
\mathsf{u}:=
\begin{pmatrix}
z_1& z_2 & z_3& z_4
\vspace{2pt}
\\
-z_2^*  &
  z_1^* &
  -z_4^*
& z_3^*
\end{pmatrix}^t 
, \quad 
\mathsf{w}:=
\begin{pmatrix}
 w_1 & -w_2^*
\vspace{2pt}
\\
w_2 & w_1^*\end{pmatrix} .
\end{eqnarray}
Here $\overset{.}{\otimes}$ denotes the composition of the tensor product $\ot$ with matrix multiplication.
As usual the coaction is extended to the whole $\O(S^7_\theta)$ as a
$*$-algebra morphism.

The subalgebra   
$B=\O(S^7_\theta)^{co\O(SU(2))}$  
of coinvariant elements for the coaction $\delta$ is generated by the entries of the matrix $p:=\mathsf{u} \dott \mathsf{u}^\dagger$ and is identified with the algebra $\O(S^4_\theta)$ of coordinate  functions  on the 
 4-sphere $S^4_\theta$ in \cite{cl01}.  The $K$-action on  $\O(S^7_\theta)$
commutes with the $\O(SU(2))$-coaction  \eqref{princ-coactSU2}
and the sphere $\O(S^4_\theta)$ carries the induced action of $K$. We  denote the generators of $\O(S^4_\theta)$ 
by
$\bb_\mu $, with eigenvalues $\mu=(\mu_1,\mu_2)=(0,0), (\pm 1,0), (0,\pm 1)$  of the action of $H_1$, $H_2$.  
 Explicitly,
\begin{align*} 
\bb_{10} &:= \sqrt{2}(z_1 \dott z_3^* + z^*_2 \dott z_4), \quad 
\bb_{01}:= \sqrt{2}(z_2 \dott z_3^* - z^*_1 \dott z_4), \nn \\
\bb_{00} &:= z_1 \dott z_1^* + z_2 \dott z_2^* - z_3 \dott z_3^* -z_4 \dott z_4^* 
\end{align*}
and $\bb_{-\mu}= \bb_{\mu}^*$.
They have commutation relations  
\beq\label{bscr}
\bb_\mu \dott \bb_\nu = \lambda^{ 2 \mu \wedge \nu} \, \bb_\nu \dott \bb_\mu \, , \qquad \lambda = e^{-\pi  i\theta} \; , \quad \mu \wedge \nu  := \mu_1 \nu_2 - \mu_2 \nu_1
\eeq
and satisfy the sphere relation $\sum\nolimits_{\mu}  \bb_\mu^* \dott \bb_\mu =1$. 

From the above, one works out \cite[Eq.~(4.27)]{pgc-examples} the following mixed relations: 
  \begin{align}\label{regole-grado3-theta}
&   (1-\bb_{00}) \dott z_1 = \sqrt{2} (\bb_{10} \dott z_3 - z_4 \dott \bb_{0-1} ) \, , 
&&  (1-\bb_{00}) \dott z_2 = \sqrt{2} (z_4 \dott \bb_{-10} + \bb_{01} \dott z_3)  \, , 
\nn \\
&(1+\bb_{00}) \dott z_3 = \sqrt{2} (\bb_{-10} \dott z_1 + \bb_{0-1} \dott z_2) \, ,
&& (1+\bb_{00}) \dott z_4 = \sqrt{2} (z_2 \dott \bb_{10} - z_1 \dott \bb_{01})  \, .
\end{align}

\subsubsection{The braided Lie algebra ${\rm{Der}}^{ \r}(\O(S^4_\theta)) $}\label{sec:derS4theta}
{The commutation relations \eqref{bscr} among the generators $\bb_\mu$ 
of $\O(S^4_\theta)$
 can be written as 
$$
\bb_\mu \dott \bb_\nu = (\r_\alpha \trl \bb_\nu) \dott (\r^\alpha \trl \bb_\mu) \, , 
$$
with braiding implemented by the triangular structure of $K$,
$$ 
\r:= e^{-2 \pi i\theta (H_1 \ot H_2 -H_2 \ot H_1)} .
$$
{which, as in Remark \ref{rem:topo}, belongs to a topological completion
of $K\otimes K$.} 
Similarly for the commutation relations \eqref{zcr} of the generators $z_r$ of $\O(S^7_\theta)$.
In particular, both of the algebras $\O(S^7_\theta)$ and $\O(S^4_\theta)$ are quasi-commutative.}
Then  
the braided Lie algebra ${\rm{Der}}^{ \r}(\O(S^4_\theta)) $ of braided derivations is a $\O(S^4_\theta)$-bimodule. 
As  a left $\O(S^4_\theta)$-module, it is generated by operators  $ \ch_\mu $  defined  on the algebra generators as
\beq\label{gen-derS4theta}
\ch_\mu (\bb_\nu) := \delta_{\mu \nu^* } - \bb_\mu \dott \bb_\nu  \; , \qquad \mu =(0,0), (\pm 1,0), (0,\pm 1)~.
\eeq 
These are extended to the whole algebra $\O(S^4_\theta)$ as braided derivations: 
$$
 \ch_\mu (\bb_\nu \dott \bb_\tau) =  \ch_\mu (\bb_\nu) \dott \bb_\tau + \lambda^{ 2 \mu \wedge \nu} \bb_\nu \dott  \ch_\mu (\bb_\tau) .
 $$
  It is easy to verify that 
$$
 \ch_\mu \, (\sum\nolimits_\nu \bb_\nu^* \dott \bb_\nu) =0 \, ,
$$
 showing that  the 
 $ \ch_\mu$'s are well-defined as derivations of  $\O(S^4_\theta)$.
 Moreover, using the sphere relation, one also sees that they are not independent but rather satisfy   the relation
\beq\label{relazione-horbis}
\sum\nolimits_\mu b^*_\mu \, \ch_\mu = 0 
\eeq
for the left module structure as in \eqref{LAmodderA}.
The action of $H_1$, $H_2$, the generators of $K$, on the generators $b_\mu$ lifts to  the adjoint action  \eqref{action-der} on the derivations 
$ \ch_\mu$:
\beq\label{actT}
H_j \trl \ch_\mu = [H_j,\ch_\mu] = \mu_j  \ch_\mu
\eeq
being $[H_j,\ch_\mu] (\bb_\nu)= H_j \trl (\ch_\mu (\bb_\nu))- \ch_\mu (H_j \trl \bb_\nu)$.

The derivations $\ch_\mu$ are the simplest combinations of the five basis derivations of
$\O(\mathbb{R}^5_\theta)$  that preserve the sphere. 
In the classical limit $\theta=0$, the derivations $ \ch_\mu$ reduce
to $T_\mu= \partial_{\mu^*} - \bb_\mu D$,  
for $D = \sum\nolimits_\mu \bb_\mu \partial _{\mu}$ the Liouville vector field.
The five weights $\mu$ are those of the five-dimensional representation $[5]$ of $so(5)$. Indeed, the vector fields $H_\mu$ carry such a representation.  
 The bracket in ${\rm{Der}}^{\r}(\O(S^4_\theta))$ is the
braided commutator in \eqref{Abla}. Using \eqref{moduloLie}  it is determined by its computation on
the generators. 
 \begin{prop}\label{prop:Ders4theta}
The braided Lie algebra structure of  ${\rm{Der}}^{\r}(\O(S^4_\theta))$ is given by 
\begin{align}\label{bracket-s4t}
[\ch_\mu , \ch_\nu]  & = \bb_\mu  \, \ch_\nu - \lambda^{ 2 \mu \wedge \nu} \bb_\nu \,  \ch_\mu   \; .
\end{align}
\end{prop}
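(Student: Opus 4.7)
The plan is to verify \eqref{bracket-s4t} by direct computation on the algebra generators $\bb_\tau$. Both sides are braided derivations of $\O(S^4_\theta)$: the left side by \eqref{bracket-der}, since the braided commutator of braided derivations is again one; the right side by \eqref{LAmodderA}, since $\O(S^4_\theta)$ is quasi-commutative and so left multiplication by $\bb_\mu$ turns a braided derivation into another braided derivation. Hence agreement on the generators $\bb_\tau$ suffices to establish the identity.

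First I would unpack the braided commutator. From \eqref{actT} the derivations $\ch_\mu$ are weight vectors under $K$ and, since the $\r$-matrix $\r = e^{-2\pi i \theta(H_1 \otimes H_2 - H_2 \otimes H_1)}$ is diagonal in the weight basis, one finds $(\r_\alpha \trl \ch_\nu)\circ(\r^\alpha \trl \ch_\mu) = \lambda^{2\mu \wedge \nu}\, \ch_\nu \circ \ch_\mu$, so
$$
[\ch_\mu, \ch_\nu] = \ch_\mu \circ \ch_\nu - \lambda^{2\mu \wedge \nu}\, \ch_\nu \circ \ch_\mu .
$$
The same eigenvalue appears when moving $\ch_\mu$ past $\bb_\nu$ via the braided Leibniz rule \eqref{Der}.

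Next I would evaluate on a generator. Using $\ch_\nu(\bb_\tau) = \delta_{\nu \tau^*} - \bb_\nu \dott \bb_\tau$ and Leibniz,
$$
\ch_\mu(\ch_\nu(\bb_\tau)) = -\delta_{\mu\nu^*}\bb_\tau + \bb_\mu \dott \bb_\nu \dott \bb_\tau - \lambda^{2\mu \wedge \nu}\delta_{\mu\tau^*}\bb_\nu + \lambda^{2\mu \wedge \nu}\bb_\nu \dott \bb_\mu \dott \bb_\tau ,
$$
with the analogous formula for $\ch_\nu(\ch_\mu(\bb_\tau))$ obtained by swapping $\mu \leftrightarrow \nu$ (which flips the sign of $\mu \wedge \nu$). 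Assembling the braided commutator, the triple products cancel using \eqref{bscr} in the form $\lambda^{2\mu \wedge \nu} \bb_\nu \dott \bb_\mu = \bb_\mu \dott \bb_\nu$, and the $\delta_{\mu\nu^*}$-contributions cancel because that Kronecker delta forces $\mu = -\nu$ and hence $\mu \wedge \nu = 0$.

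What survives is $\delta_{\nu\tau^*}\bb_\mu - \lambda^{2\mu \wedge \nu}\delta_{\mu\tau^*}\bb_\nu$, which is exactly $(\bb_\mu \ch_\nu - \lambda^{2\mu \wedge \nu} \bb_\nu \ch_\mu)(\bb_\tau)$ once one expands the latter using \eqref{gen-derS4theta} (the triple $\bb\dott\bb\dott\bb$ contributions there cancel by the same use of \eqref{bscr}). The main obstacle is purely bookkeeping: tracking the powers of $\lambda$ produced by the braiding, by the Leibniz rule, and by the quasi-commutativity \eqref{bscr}, and verifying that the cancellations occur at precisely the places dictated by the weight constraints.
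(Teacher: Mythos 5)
Your proposal is correct and follows essentially the same route as the paper: a direct evaluation of the braided commutator on the generators $\bb_\tau$ via the braided Leibniz rule and the defining formula \eqref{gen-derS4theta}, with the cancellations controlled by the weight constraints (the paper organizes them through the identity $\ch_\mu(\bb_\nu)=\lambda^{2\mu\wedge\nu}\ch_\nu(\bb_\mu)$, you through the vanishing of $\mu\wedge\nu$ on the support of $\delta_{\mu\nu^*}$ and the quasi-commutativity \eqref{bscr} -- the same facts). Your preliminary remark that both sides are braided derivations, so that agreement on generators suffices, is a point the paper leaves implicit and is correctly justified by the quasi-commutativity of $\O(S^4_\theta)$.
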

 \begin{proof}
We compute the braided commutator of two generators $\ch_\mu,\ch_\nu$ of 
${\rm{Der}}^{\r}(\O(S^4_\theta))$: 
\begin{align*}
[\ch_\mu , \ch_\nu]  (\bb_\tau)
&= 
(\ch_\mu \circ  \ch_\nu -  \lambda^{ 2 \mu \wedge \nu}  \ch_\nu \circ  \ch_\mu )(\bb_\tau)
\\ &=
-  \ch_\mu( \bb_\nu \dott \bb_\tau) + \lambda^{ 2 \mu \wedge \nu}  \ch_\nu (\bb_\mu \dott \bb_\tau)
\\ &=
- \big(\ch_\mu (\bb_\nu) \dott \bb_\tau + \lambda^{ 2 \mu \wedge \nu} \bb_\nu \dott  \ch_\mu (\bb_\tau)  \big)
+ \lambda^{ 2 \mu \wedge \nu}  \big(  \ch_\nu (\bb_\mu) \dott \bb_\tau + \lambda^{ 2 \nu \wedge \mu} \bb_\mu \dott  \ch_\nu (\bb_\tau)  \big)
\\ &=
-  \ch_\mu (\bb_\nu) \dott \bb_\tau - \lambda^{ 2 \mu \wedge \nu} \bb_\nu \dott  \ch_\mu (\bb_\tau)  
+ \lambda^{ 2 \mu \wedge \nu}     \ch_\nu (\bb_\mu) \dott \bb_\tau +  \bb_\mu \dott  \ch_\nu (\bb_\tau) 
\\ &=
- \lambda^{ 2 \mu \wedge \nu} \bb_\nu \dott  \ch_\mu (\bb_\tau)  
 +  \bb_\mu \dott  \ch_\nu (\bb_\tau) \; ,
\end{align*}
for  $\mu, \nu, \tau =(0,0), (\pm 1,0), (0,\pm 1)$. 
For the last equality we have used that $\nu^* \wedge \nu =0$ implies 
$\ch_\mu (\bb_\nu)=  \lambda^{ 2 \mu \wedge \nu}  \ch_\nu (\bb_\mu) $ for each $\mu, \nu$.  Then,  identity \eqref{bracket-s4t} is verified.
\end{proof}

We conclude this subsection with two Lemmas that we use later on.
We first observe that  formula \eqref{bracket-s4t}  allows us to write the derivations 
$\ch_\mu$ in terms of their commutators:
\begin{lem}\label{lem:vf-hor}
The generators $\ch_\mu$ can  be expressed in terms  of their commutators
as 
$$
\ch_\nu  = \sum\nolimits_{\mu }   b^*_\mu \,  \, [\ch_\mu , \ch_\nu]  .
$$
\end{lem}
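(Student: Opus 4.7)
The plan is to substitute the bracket formula \eqref{bracket-s4t} into the right-hand side and reduce everything using the sphere relation and the linear dependence relation \eqref{relazione-horbis}.

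First I would expand, for each $\nu$,
\[
\sum\nolimits_{\mu } b^*_\mu \,[\ch_\mu,\ch_\nu] \;=\; \sum\nolimits_{\mu } b^*_\mu \bb_\mu \,\ch_\nu \;-\; \sum\nolimits_{\mu} \lambda^{2\mu \wedge \nu}\, b^*_\mu \bb_\nu \,\ch_\mu .
\]
The first sum collapses immediately via the sphere relation $\sum_\mu b^*_\mu\dott \bb_\mu = 1$, giving $\ch_\nu$. So the task reduces to showing that the second sum vanishes.

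For the second sum, I would commute $b^*_\mu$ past $\bb_\nu$. Since $b^*_\mu$ has weight $-\mu$ under the $K$-action, the relations \eqref{bscr} (extended to starred generators) yield $b^*_\mu \dott \bb_\nu = \lambda^{-2\mu\wedge\nu}\, \bb_\nu \dott b^*_\mu$, so the factor $\lambda^{2\mu\wedge\nu}$ is exactly absorbed. The sum becomes
\[
 \sum\nolimits_{\mu} \bb_\nu \, b^*_\mu \, \ch_\mu \;=\; \bb_\nu \,\Big(\sum\nolimits_{\mu} b^*_\mu \,\ch_\mu \Big) \;=\; 0,
\]
using \eqref{relazione-horbis} in the last step.

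The argument is essentially a two-line calculation; the only subtlety is handling the braiding factor correctly when moving $b^*_\mu$ across $\bb_\nu$, which is why it is crucial that the coefficient in the second sum is precisely $\lambda^{2\mu\wedge\nu}$ and not some other power. No Jacobi identity or derivation rule is needed, only the commutation relations \eqref{bscr} and the two relations satisfied by the $\bb_\mu$'s and $\ch_\mu$'s respectively.
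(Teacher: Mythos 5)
Your proof is correct and follows essentially the same route as the paper: expand the bracket via \eqref{bracket-s4t}, collapse the first sum with the sphere relation, and kill the second sum by commuting $b^*_\mu$ past $b_\nu$ (absorbing the factor $\lambda^{2\mu\wedge\nu}$) and invoking \eqref{relazione-horbis}. The handling of the braiding factor is exactly as in the paper's computation.
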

\begin{proof}
From the expression \eqref{bracket-s4t} of the commutators, we compute
\begin{align*}
\sum\nolimits_{\mu }   b^*_\mu \,  [\ch_\mu , \ch_\nu]  
& = \sum\nolimits_{\mu }   b^*_\mu \dott \bb_\mu  \,  \ch_\nu -  \sum\nolimits_{\mu } \lambda^{ 2 \mu \wedge \nu}  b^*_\mu \dott  \bb_\nu \,  \ch_\mu
\\ & =  \ch_\nu -  \bb_\nu \,  \sum\nolimits_{\mu }    b^*_\mu \,    \ch_\mu = \ch_\nu \, ,
\end{align*}
where we  used  the sphere relation for the second equality and   \eqref{relazione-horbis} for the last one. 
\end{proof}
For the braided antisymmetric commutators we introduce the notation
\beq \label{vf-hor2}
\wl_{\mu, \nu}^\pi:= [\ch_\mu , \ch_\nu]  =  - \lambda^{ 2 \mu \wedge \nu} \wl_{\nu, \mu}^\pi \, .
\eeq
The action \eqref{actT} of $H_1$ and $H_2$ on the derivations $T_\mu$ implies 
$$
H_j \trl \wl_{\mu, \nu}^\pi = (\mu_j  + \nu_j) \wl_{\mu, \nu}^\pi.
$$

From  \eqref{bracket-s4t}, on the generators of   $\O(S^4_\theta)$ these derivations are given by  
\beq\label{vf-hor3}
\wl_{\mu, \nu}^\pi (\bb_\sigma) =  \bb_\mu \delta_{\nu^* \sigma}  - \lambda^{2 \mu \wedge \nu } \bb_\nu \delta_{\mu^* \sigma}  \, .
\eeq
\begin{prop}\label{prop:crdown}
The derivations  $\wl_{\mu, \nu}^\pi$  
{ give a faithful  representation of }
the braided Lie algebra $so_\theta(5)$, that is:
$$
[\wl_{\mu, \nu}^\pi, \wl_{\tau, \sigma}^\pi]  
= \delta_{\nu^* \tau }  \wl_{\mu, \sigma}^\pi - \lambda^{ 2 \mu \wedge \nu}   \delta_{\mu^* \tau } \wl_{\nu, \sigma}^\pi
  - \lambda^{ 2 \tau \wedge \sigma} ( \delta_{\nu^* \sigma }   \wl_{\mu, \tau}^\pi  - \lambda^{2 \mu \wedge \nu }  \delta_{\mu^* \sigma}   \wl_{\nu, \tau}^\pi  )
$$
\end{prop}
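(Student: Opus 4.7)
My plan is to reduce the stated commutation relation to an identity on algebra generators and then verify it by direct calculation. Since both sides of the equation belong to $\mathrm{Der}^{\r}(\O(S^{4}_\theta))$ --- the left-hand side as a braided commutator in the Lie algebra of braided derivations, the right-hand side as a $\bbK$-linear combination of the $\wl^{\pi}_{\bullet,\bullet}$ with scalar (Kronecker delta) coefficients --- and the algebra $\O(S^{4}_\theta)$ is generated by the $\bb_{\rho}$, it is enough to check equality after evaluation on an arbitrary generator $\bb_{\rho}$. This reduces the proposition to an identity in $\O(S^{4}_\theta)$.

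For the left-hand side, I would exploit that $\wl^{\pi}_{\mu,\nu}$ has $K$-weight $\mu+\nu$, so the braided commutator formula \eqref{bracket-der}, together with the diagonal action of $\r$ on weight vectors, gives
\begin{equation*}
[\wl^{\pi}_{\mu,\nu},\wl^{\pi}_{\tau,\sigma}](\bb_{\rho})
= \wl^{\pi}_{\mu,\nu}\bigl(\wl^{\pi}_{\tau,\sigma}(\bb_{\rho})\bigr)
- \lambda^{\,2(\mu+\nu)\wedge(\tau+\sigma)}\,
\wl^{\pi}_{\tau,\sigma}\bigl(\wl^{\pi}_{\mu,\nu}(\bb_{\rho})\bigr).
\end{equation*}
Unfolding both compositions by means of \eqref{vf-hor3} produces a sum of four monomials of the form $\lambda^{\ast}\,\bb_{\alpha}\,\delta_{\ast\ast}\,\delta_{\ast\ast}$ with $\alpha\in\{\mu,\nu,\tau,\sigma\}$. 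Substituting \eqref{vf-hor3} into each of the four $\wl^{\pi}_{\bullet,\bullet}(\bb_{\rho})$ appearing on the right-hand side produces four analogous monomials.

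The identity is then matched in pairs by comparing monomials sharing the same leading generator $\bb_{\alpha}$. The main, though entirely routine, obstacle is the bookkeeping of $\lambda$-exponents: whenever a Kronecker delta such as $\delta_{\nu^{\ast}\tau}$ forces $\tau=\nu^{\ast}$, the identity $\alpha\wedge\alpha^{\ast}=0$ and bilinearity of $\wedge$ permit the exponents on the LHS to be rewritten in the form appearing on the RHS, and the braided antisymmetry $\wl^{\pi}_{\mu,\nu}=-\lambda^{\,2\mu\wedge\nu}\wl^{\pi}_{\nu,\mu}$ from \eqref{vf-hor2} accounts for the relative signs in those RHS terms in which the roles of $\mu,\nu$ or of $\tau,\sigma$ are swapped. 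No further structural input is needed beyond Proposition \ref{prop:Ders4theta} and \eqref{vf-hor3}, and the match-up of the eight resulting monomials yields the claim, which in particular shows that the $\wl^{\pi}_{\mu,\nu}$ give a faithful representation of the braided Lie algebra $so_{\theta}(5)$.
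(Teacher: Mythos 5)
Your proposal is correct and follows essentially the same route as the paper: the paper likewise evaluates both compositions on a generator $\bb_\alpha$ via \eqref{vf-hor3}, forms the braided commutator with the weight factor $\lambda^{2(\mu+\nu)\wedge(\tau+\sigma)}$, and matches the resulting monomials pairwise by their Kronecker deltas, using the constraint (e.g.\ $\tau=\nu^*$) to collapse the $\lambda$-exponents. The only cosmetic difference is that you make explicit the (correct) reduction to generators on the grounds that both sides are braided derivations.
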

\begin{proof}
From \eqref{vf-hor3} a direct computation yields:
\begin{align}\label{hbc}
\wl_{\mu, \nu}^\pi \circ \wl_{\tau, \sigma}^\pi (b_\alpha) 
=  ( \bb_\mu \delta_{\nu^* \tau}  - \lambda^{2 \mu \wedge \nu } \bb_\nu \delta_{\mu^* \tau} ) \delta_{\sigma^* \alpha}
- \lambda^{2 \tau \wedge \sigma} ( \bb_\mu \delta_{\nu^* \sigma}  - \lambda^{2 \mu \wedge \nu } \bb_\nu \delta_{\mu^* \sigma} ) \delta_{\tau^* \alpha} 
\end{align}
and the one with indices exchanged
\begin{align}\label{hbc1}
\wl_{\tau, \sigma}^\pi \circ \wl_{\mu, \nu}^\pi (b_\alpha) 
=  ( b_\tau \delta_{\sigma^* \mu}  - \lambda^{2 \tau \wedge \sigma } b_\sigma \delta_{\tau^* \mu} ) 
\delta_{\nu^* \alpha}
- \lambda^{2 \mu \wedge \nu} ( b_\tau \delta_{\sigma^* \nu}  - \lambda^{2 \tau \wedge \sigma } b_\sigma \delta_{\tau^* \nu} ) \delta_{\mu^* \alpha} .
\end{align}
The braided commutator is just given by the terms in \eqref{hbc} minus the terms in \eqref{hbc1} with an extra factor coming from the braiding, that is $\lambda^{2(\mu + \nu) \wedge (\tau + \sigma)}$. The result is obtained by pairing the terms with the same $\delta$'s. For instance, the one coming from 
$\delta_{\nu^* \tau} = \delta_{\tau^* \nu}$ is 
$$
\bb_\mu \delta_{\sigma^* \alpha} - \lambda^{2(\mu + \nu) \wedge (\tau + \sigma) 
+ 2 (\mu \wedge \nu + \tau \wedge \sigma)} b_\sigma \delta_{\mu^* \alpha} = 
\bb_\mu \delta_{\sigma^* \alpha} - \lambda^{2(\mu \wedge \sigma)} b_\sigma \delta_{\mu^* \alpha}  
= \wl_{\mu, \sigma}^\pi (b_\alpha)
$$
using that  $\tau=-\nu$ to simplify the exponent of $\lambda$. 
The other terms behave similarly. 
\end{proof}
In the classical limit, the derivations $\wl_{\mu, \nu}^\pi$ give the representation [10] of $so(5)$, thus 
the name $so_\theta (5)$ for the braided Lie algebra generated by the  deformed generators
$\wl_{\mu, \nu}^\pi$. 

 \subsubsection{The sequence and the connection} \label{sec:spS7}

For the $SU(2)$ bundle on the sphere $S^{4}_\theta$ we have   the short 
sequence of braided Lie algebras  
\beq\label{as-S7theta}
0 \to \mathrm{aut}_{\O(S^4_\theta) }^{\r}(\O(S^7_\theta)) \stackrel{\imath}{\to} {\rm{Der}}_{\M^H}^{\r}(\O(S^7_\theta)) \stackrel{\pi}{\to}  {\rm{Der}}^{\r}(\O(S^4_\theta))    \to 0  \, . 
\eeq
The algebra ${\rm{Der}}^{\r}(\O(S^4_\theta))$ was given in \S \ref{sec:derS4theta}. 
The braided Lie algebra ${\rm{Der}}^{\r}_{\M^H}(\O(S^7_\theta)) $ of derivations 
of $\O(S^7_\theta)$ which are equivariant in the sense of \eqref{LieGG} is also given as an 
$\O(S^4_\theta)$-module generated by a set of elements. We use `partial derivatives' 
$\partial_r$,  $\partial_r^*= \partial_{-r}$ defined by  $\partial_r(z_s) 
= \delta_{r s}$,  for $r, s=\pm 1, \pm 2,\pm 3, \pm 4$, on the algebra generators and extended as braided derivations. 
There are two commuting elements (the Cartan subalgebra) 
\begin{align}\label{der-sopra-theta0}
&\wl_{1}=\wl_{10,-10} := \tfrac{1}{2}( z_1   \partial_1 - z_1^* \,  \partial^*_1 - z_2 \,  \partial_2  + z_2^* \,  \partial^*_2 - z_3 \,  \partial_3 + z_3^* \,  \partial^*_3 + z_4 \,  \partial_4  - z_4^* \,  \partial^*_4) 
\nn
\\
&\wl_{2}=\wl_{01,0-1} :=  \tfrac{1}{2}(- z_1 \,  \partial_1 + z_1^* \,  \partial^*_1 + z_2 \,  \partial_2  - z_2^* \,  \partial^*_2 - z_3 \,  \partial_3 + z_3^* \,  \partial^*_3 + z_4 \,  \partial_4  - z_4^* \,  \partial^*_4) .
\end{align}
The remaining generators can be given, for $\mu, \nu =(0,0), (\pm 1,0), (0,\pm 1)$, as
$$
\wl_{\mu + \nu} = \wl_{\mu, \nu} \; , \qquad \mu \neq \nu^* , \; \mu < \nu   \; ,
$$
with the  lexicographic order $(-1,0)<(0,-1)<(0,0)< (0,1)<(1,0)$. 
Then, the vectors $ \mathsf{r}=(\mu + \nu)  \in \{(\pm 1 , 0), (0, \pm 1), (\pm1,\pm 1)\}$ are the `root vectors' of the braided Lie algebra $so_\theta(5)$. Explicitly, 
\begin{align}\label{der-sopra-theta}
  \wl_{10} &= \wl_{00,10} :=  \stwo (  z_1 \,  \partial_3  - \lambda\, z_3^* \,  \partial^*_1 - \lambda\, z_4 \,  \partial_2 +   z_2^* \,  \partial^*_4)
\nn
\\   
\wl_{-10} &=\wl_{-10,00} := \stwo (   z_3 \,  \partial_1 - \lambda^{-1}  z_1^* \,  \partial^*_3 - \lambda^{-1}  z_2 \,  \partial_4  +   z_4^* \,  \partial^*_2)
\nn
\\
 \wl_{01} &= \wl_{00,01} := \stwo (   z_2 \,  \partial_3  - \lambda^{-1}  z_3^* \,  \partial^*_2 +\lambda^{-1}   z_4 \,  \partial_1 -   z_1^* \,  \partial^*_4) 
\nn
\\    
\wl_{0-1}&=\wl_{0 -1,00} := \stwo (   z_3 \,  \partial_2  - \lambda \, z_2^* \,  \partial^*_3 + \lambda \,   z_1 \,  \partial_4  -  z_4^* \,  \partial^*_1 )
\nn
\\
 \wl_{11} &= \wl_{01,10}:=  - \lambda^{-1} z_4 \,  \partial_3  + \lambda^{-1} z_3^* \,  \partial^*_4 
\nn
\\    
\wl_{-1-1} &= \wl_{-10,0-1} := \lambda \, z_4^* \,  \partial^*_3  -  \lambda \, z_3 \,  \partial_4 
\nn
\\
 \wl_{1-1} &= \wl_{0-1,10}:= - \lambda^{2} z_1 \,  \partial_2  + \lambda^{2} z_2^* \,  \partial^*_1  
\nn
\\ 
\wl_{-11} & = \wl_{-10, 01}:= - \lambda^{-2} z_2 \,  \partial_1  + \lambda^{-2} z_1^* \,  \partial^*_2 . 
\end{align}
For $\mu >\nu$ we set $\wl_{\mu, \nu}=  - \lambda^{ 2 \mu \wedge \nu} \wl_{\nu, \mu}$.

\noindent
These derivations 
 { give a faithful  representation of } the braided Lie algebra $so_\theta(5)$:
\beq\label{crup}
[\wl_{\mu, \nu}, \wl_{\tau, \sigma}]  
= \delta_{\nu^* \tau }  \wl_{\mu, \sigma} - \lambda^{ 2 \mu \wedge \nu}   \delta_{\mu^* \tau } \wl_{\nu, \sigma}
  - \lambda^{ 2 \tau \wedge \sigma} ( \delta_{\nu^* \sigma }   \wl_{\mu, \tau}  - \lambda^{2 \mu \wedge \nu }  \delta_{\mu ^* \sigma}   \wl_{\nu, \tau}  ) .
\eeq
In particular, with the Cartan generators \eqref{der-sopra-theta0}, one finds that
$$
[\wl_{j}, \wl_{\mu + \nu}]  = [\wl_{j}, \wl_{\mu + \nu}] = (\mu + \nu)_j \wl_{\mu + \nu}, \qquad j=1,2.
$$
In the classical limit $\theta = 0$, the derivations $\wl_{\mu, \nu}$ or $\wl_{\mu + \nu}$ are a representation of the Lie algebra $so(5)$ as vector fields on the sphere $S^7$.

The braided Lie subalgebra $\mathrm{aut}^{\r}_{\O(S^4_\theta) }(\O(S^7_\theta)) \subset {\rm{Der}}^{\r}_{\M^H}(\O(S^7_\theta))$ consists of vertical derivations. 
It turns out it is generated, as  an  $\O(S^4_\theta)$-module, by the ten derivations
\begin{align}\label{ver-theta}
\wy_{\mu, \nu} &:= \sum\nolimits_{\gamma } \bb_\gamma^* \dott \Big( \bb_\gamma \,   \wl_{\mu, \nu}  - \lambda^{- 2 \mu \wedge \gamma} \bb_\mu \,    \wl_{\gamma, \nu} + \lambda^{  -  2\nu \wedge (\gamma + \mu)}  \, \bb_\nu \,     \wl_{\gamma, \mu} \Big) \nn \\
 &\:= \wl_{\mu, \nu} -
  \sum\nolimits_{\gamma,  \tau } \Big(   \bb_\mu \dott \bb_\gamma^* \, \, 
  \delta_{\nu \tau}
 - \lambda^{  2  \mu \wedge  \nu}  \,  \bb_\nu \dott \bb_\gamma^* \,\,  \delta_{\mu \tau} 
 \Big) \,  \wl_{\gamma, \tau}
\end{align}
for $\mu, \nu $  and sums on $\gamma, \tau$ in $\{(0,0), (\pm 1,0), (0,\pm 1)\}$.
It is direct to see that these derivations are vertical, that is $\wy_{\mu, \nu}(b) = 0$, for $b\in \O(S^4_\theta)$. The proof that they generate $\mathrm{aut}^{\r}_{\O(S^4_\theta) }(\O(S^7_\theta))$ follows the corresponding classical results in \cite[Prop.~4.1]{pgc-examples}.

The sequence \eqref{as-S7theta} is exact. Any derivation  $X \in {\rm{Der}}^{\r}_{\M^H}(\O(S^7_\theta))$, being $H$-equivariant, 
restricts to a derivation $X^\pi$ of the subalgebra $\O(S^4_\theta)$. 
This determines 
the  map $\pi: {\rm{Der}}^{\r}_{\M^H}(\O(S^7_\theta)) \to  {\rm{Der}}^{\r}(\O(S^4_\theta)) $.
By construction, $\ker{\pi}= \imath(\mathrm{aut}^{\r}_{\O(S^4_\theta) }(\O(S^7_\theta )))$. 
By a direct computation, one verifies that
the restrictions $\pi(\wl_{\mu, \nu}) = \wl_{\mu, \nu}^\pi$ of the derivations $\wl_{\mu, \nu}$ in 
\eqref{der-sopra-theta0}  and \eqref{der-sopra-theta} to $\O(S^4_\theta)$ 
coincide with the derivations $\wl_{\mu, \nu}^\pi= [\ch_\mu , \ch_\nu] $ defined in \eqref{vf-hor2} (thus the use of the same symbol).
Moreover,  from Lemma \ref{lem:vf-hor},    the derivations 
$\ch_\nu$   can be written as   $\ch_\nu = \sum\nolimits_{\mu }   b^*_\mu \,  \wl_{\mu, \nu}^\pi$, 
  also giving that 
\begin{align}\label{pi-hor-S7}
 \ch_\nu  = \pi\big( \sum\nolimits_{\mu }    b^*_\mu \,  \wl_{\mu, \nu} \big) \; .
\end{align}
This shows the surjectivity of $\pi : {\rm{Der}}^{\r}_{\M^H}(\O(S^7_\theta)) 
\to {\rm{Der}}^{\r}(\O(S^4_\theta))$, and thus the exactness of the sequence in \eqref{as-S7theta} (since the elements of $\mathrm{aut}^{\r}_{\O(S^4_\theta) }(\O(S^7_\theta))$ are vertical). 
 We know from the general theory in \S \ref{se:scgt} that the braided Lie algebra 
 $\mathrm{aut}^{\r}_{\O(S^4_\theta) }(\O(S^7_\theta))$ is that of infinitesimal gauge transformations.
 
In the limit $\theta=0$, the sequence of Lie algebras in \eqref{as-S7theta} is  the Atiyah sequence of the $SU(2)$-principal Hopf bundle $S^7 \to S^4$, with $\mathrm{aut}^{\r}_{\O(S^4) }(\O(S^7))$ the (infinite dimensional classical)  Lie algebra of infinitesimal gauge transformations.

\begin{rem}\label{rem:pgcII} 
The braided Lie algebras in the sequence \eqref{as-S7theta} were obtained in \cite{pgc-examples}
from a twist deformation quantization of the corresponding Lie algebras of the $SU(2)$-classical fibration $S^7 \to S^4$.  The braided Lie algebra ${\rm{Der}}^{\r}_{\M^H}(\O(S^7_\theta))$
was generated by derivations  $\widetilde{H}_j , \, j=1,2$, and $\widetilde{E}_\mathsf{r}$ 
for $\mathsf{r}$ the roots of the Lie algebra $so(5)$. 
 These are related to the generators $\wl_{\mu, \nu} $ by
$$
\wl_{j} = \widetilde{H}_j \, , \quad j = 1,2 , \qquad 
\wl_{\mu, \nu} =   \lambda^{\mu \wedge \nu} \widetilde{E}_{\mu + \nu}  
\, , \quad \mu \neq \nu^* , \; \mu < \nu   \; ,
$$
with the previous lexicographic order on the $\mu$'s.  
 Moreover,
$$
\wl_{j}^\pi = \widetilde{H}_j^\pi \, , \quad j = 1,2 , \qquad 
\wl_{\mu, \nu}^\pi = \lambda^{\mu \wedge \nu} \varphi_\mu \varphi_\nu  \widetilde{E}_{\mu + \nu}^\pi  
\, , \quad  \mu \neq \nu^* , \; \mu < \nu   \; ,
$$
for their restrictions, where
$ \varphi_{00}:=1, \, \varphi_{\pm1 0}:= \lambda^{\mp \half} , \, \varphi_{0 \pm 1}:= \lambda^{\pm \half} $
(see \cite[Remark 4.7]{pgc-examples}).
 \end{rem}
Moreover, each of the non vanishing terms in
parenthesis in \eqref{ver-theta} is just one of the derivations $\widetilde{K}$'s and $\widetilde{W}$'s in \cite[Prop.~4.14]{pgc-examples}. Then, the ten vertical derivations 
$\wy_{\mu, \nu}$ are those associated to the quantization  
 $\wy_{(11)} \propto \wl_{(0,1)+(1,0)}$  of the
operator $Y_{11}$, the highest weight vector of the representation
$[10]$. 

\medskip
A connection on the $SU(2)$ Hopf--Galois extension $\O(S^{4}_\theta) \subset \O(S^7_\theta)$ is 
a splitting
 of its Atiyah sequence \eqref{as-S7theta}. This results into an equivariant direct sum decomposition of 
 ${\rm{Der}}^{ \r }_{\M^H}(\O(S^7_\theta))$ in horizontal and vertical components.
\begin{prop}
The right $\O(S^4_\theta)$-module map $\rho : {\rm{Der}}^{ \r}(\O(S^4_\theta)) \to {\rm{Der}}^{ \r}_{\M^H}(\O(S^7_\theta)) $, defined 
on  the generators $ \ch_\nu$ of $ {\rm{Der}}^{ \r }(\O(S^4_\theta))$ as 
\beq\label{hor-lift0}
\rho(\ch_\nu) := \sum\nolimits_{\mu }   \bb_\mu^* \,  \wl_{\mu, \nu} \; 
\eeq
is a right {splitting}  of the sequence \eqref{as-S7theta}.
\end{prop}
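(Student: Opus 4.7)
The proof divides into two tasks: (a) showing that the assignment $\ch_\nu \mapsto \sum_\mu \bb_\mu^* \wl_{\mu, \nu}$ extends to a well-defined right $\O(S^4_\theta)$-module map from ${\rm{Der}}^{\r}(\O(S^4_\theta))$, and (b) verifying the splitting identity $\pi \circ \rho = \id$. The module ${\rm{Der}}^{\r}(\O(S^4_\theta))$ is generated by the $\ch_\nu$ subject to the single relation \eqref{relazione-horbis}, $\sum_\mu \bb_\mu^* \ch_\mu = 0$. Since $\O(S^4_\theta)$ is quasi-commutative, the left and right $B$-module structures on $K$-eigenvectors differ only by $\lambda$-phases, and this relation has the same shape with respect to both structures; hence the extension of $\rho$ to a right $B$-module map is well-defined exactly when this relation is preserved. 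Moreover each $\rho(\ch_\nu)$ automatically lies in ${\rm{Der}}^{\r}_{\M^H}(\O(S^7_\theta))$, since $\bb_\mu^* \in \O(S^4_\theta)$ are $H$-coinvariant and $\wl_{\mu, \nu}$ is $H$-equivariant.

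For (b), I would apply $\pi$ to the defining formula and invoke Lemma \ref{lem:vf-hor} (equivalently \eqref{pi-hor-S7}), obtaining
\[
\pi(\rho(\ch_\nu)) \;=\; \sum_\mu \bb_\mu^* \, \pi(\wl_{\mu, \nu}) \;=\; \sum_\mu \bb_\mu^* \, \wl_{\mu, \nu}^\pi \;=\; \sum_\mu \bb_\mu^* \, [\ch_\mu, \ch_\nu] \;=\; \ch_\nu,
\]
so that the splitting identity is immediate.

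For (a), I would compute
\[
\sum_\nu \bb_\nu^* \rho(\ch_\nu) \;=\; \sum_{\mu, \nu} \bb_\nu^* \bb_\mu^* \wl_{\mu, \nu},
\]
and relabel $\mu \leftrightarrow \nu$ to rewrite this as $\sum_{\mu, \nu} \bb_\mu^* \bb_\nu^* \wl_{\nu, \mu}$. The braided commutation $\bb_\mu^* \bb_\nu^* = \lambda^{2 \mu \wedge \nu} \bb_\nu^* \bb_\mu^*$ (using that $\bb_\mu^*$ has $K$-weight $-\mu$ and $(-\mu) \wedge (-\nu) = \mu \wedge \nu$) combined with the braided antisymmetry $\wl_{\nu, \mu} = -\lambda^{-2\mu \wedge \nu} \wl_{\mu, \nu}$ produce compensating phases, and the relabeled sum equals the negative of the original. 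Hence $\sum_\nu \bb_\nu^* \rho(\ch_\nu) = 0$, and $\rho$ is well-defined.

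I expect the only real effort in the proof to lie in the bookkeeping of the $\lambda$ phases and the correct pairing of indices; no deeper obstacle arises, since the surjectivity of $\pi$ on each generator $\ch_\nu$ has already been established in \eqref{pi-hor-S7}. The $K$-equivariance of $\rho$, while not required for the splitting property, follows from the observation that both $\ch_\nu$ and $\rho(\ch_\nu)$ are $K$-eigenvectors of weight $\nu$, consistent with Remark \ref{rem:eq}.
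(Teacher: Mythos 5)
Your verification of $\pi\circ\rho=\id$ is exactly the paper's proof, which consists of the single observation that the identity follows from \eqref{pi-hor-S7} (i.e.\ from Lemma \ref{lem:vf-hor}). The extra well-definedness check in your part (a) goes beyond what the paper writes, and the phase bookkeeping is right: since $\bb_\mu^*$ and $\ch_\mu$ carry opposite $K$-weights the braiding between them is trivial, so $\bb_\mu^*\,\ch_\mu=\ch_\mu\cdot\bb_\mu^*$ and the relation \eqref{relazione-horbis} indeed reads the same in both module structures, while the relabelling $\mu\leftrightarrow\nu$ combined with $\bb_\mu^*\dott\bb_\nu^*=\lambda^{2\mu\wedge\nu}\bb_\nu^*\dott\bb_\mu^*$ and $\wl_{\nu,\mu}=-\lambda^{-2\mu\wedge\nu}\wl_{\mu,\nu}$ does give $\sum_\nu\bb_\nu^*\,\rho(\ch_\nu)=-\sum_\nu\bb_\nu^*\,\rho(\ch_\nu)=0$. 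The only point you assert without justification is that \eqref{relazione-horbis} is the \emph{only} relation among the $\ch_\mu$; this is true because the tangent module is the image of a corank-one projection on the free module $\O(S^4_\theta)^5$ (so the relation module is generated by the single row $(\bb_\mu^*)_\mu$), but the paper establishes that picture only for $S^{2n}_\theta$ in \S 5.2, so a sentence to that effect would close the gap.
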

\begin{proof}
From \eqref{pi-hor-S7}, it is immediate to see that $\pi \circ \rho=\id$.
\end{proof}

From Remark \ref{rem:eq}, this connection is left $\O(S^4_\theta)$-linear as well since it is equivariant:
 \begin{prop} \label{prop:equiv-conn}
The connection $\rho$ is invariant under the action (by braided commutators) of  the braided
Lie algebra $so_\theta(5)$ of $\O(SO_\theta(4))$-equivariant derivations on $\O(SO_\theta(5))$:  
for every $\wl_{\mu,\nu} \in \DerH{\O(S_\theta^7)}$ we have  $ (\delta_{\wl_{\mu,\nu}} \rho)(X)=0$, for $X\in  {\rm{Der}}^{\r}(\O(S^{4}_\theta))$,  that is 
$$
 [\wl_{\mu,\nu} ,\rho(X)] -\rho([\wl_{\mu,\nu}^\pi , X] )=0 .
$$
 \end{prop}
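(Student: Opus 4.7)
Since $\rho$ is right $\O(S^4_\theta)$-linear and the action $\delta_Y$ is a braided derivation in the second argument of the bracket on the module $\Hom_B(T,P)$, it suffices to verify the identity on the generators, i.e.\ to show
\[
 [\wl_{\mu,\nu},\rho(\ch_\sigma)] - \rho([\wl^\pi_{\mu,\nu},\ch_\sigma]) = 0
\]
for every triple $\mu,\nu,\sigma$. My first step is to observe that $\rho$ is $K$-equivariant: indeed, summing over $\mu$ in $\rho(\ch_\nu)=\sum_\mu b^*_\mu \wl_{\mu,\nu}$ the weights of $b^*_\mu$ and $\wl_{\mu,\nu}$ under the Cartan $H_1,H_2$ conspire to give the same weight as $\ch_\nu$, and this matches the $K$-action on $\rho$ via \eqref{action-hom-bis}. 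Hence by the simplification \eqref{var-cur-eq} the claim of the Proposition reduces to the displayed equation above.

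For the right-hand side, I use \eqref{bracket-s4t} and the right $B$-linearity of $\rho$ to write
\[
\rho([\wl^\pi_{\mu,\nu},\ch_\sigma])
=\rho\bigl(\wl^\pi_{\mu,\nu}\!\cdot\!(\text{action on }\ch_\sigma)\bigr),
\]
which, using \eqref{vf-hor2}--\eqref{vf-hor3} and the identity $\ch_\sigma=\sum_\gamma b^*_\gamma \wl^\pi_{\gamma,\sigma}$ from Lemma~\ref{lem:vf-hor}, produces a combination of terms of the form $b^*_\gamma\wl_{\tau,\sigma}$ with explicit $\lambda$-coefficients, lifted back via $\rho$.

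For the left-hand side, I compute $[\wl_{\mu,\nu},\sum_\gamma b^*_\gamma\wl_{\gamma,\sigma}]$ with the braided Leibniz rule \eqref{moduloLie}. Three kinds of terms appear: (i) purely algebraic pieces $b^*_\gamma[\wl_{\mu,\nu},\wl_{\gamma,\sigma}]$, which are evaluated via the structure relations \eqref{crup} of $so_\theta(5)$; (ii) ``derivation'' pieces $(\wl_{\mu,\nu}(b^*_\gamma))\wl_{\gamma,\sigma}$, which restrict on $B$ to $\wl^\pi_{\mu,\nu}(b^*_\gamma)\wl_{\gamma,\sigma}$; and (iii) braided reordering pieces. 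The terms of type (i) contain one summand matching the right-hand side exactly, and other summands of the form $b_\mu \sum_\gamma b^*_\gamma \wl_{\gamma,\sigma}$ and $b_\nu\sum_\gamma b^*_\gamma\wl_{\gamma,\sigma}$ (up to $\lambda$-phases). The terms of types (ii) and (iii) combine, after using $\wl^\pi_{\mu,\nu}(b^*_\gamma) = b_\mu \delta_{\nu^*\gamma^*} - \lambda^{2\mu\wedge\nu}b_\nu\delta_{\mu^*\gamma^*}$ (equivalently \eqref{vf-hor3}), into expressions proportional to $\sum_\gamma b^*_\gamma \wl_{\gamma,\sigma}$ times $b_\mu$ or $b_\nu$.

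The main obstacle is bookkeeping the braiding factors $\lambda^{2\alpha\wedge\beta}$ so that these excess contributions cancel pairwise. The cancellation mechanism is the sphere relation $\sum_\gamma b^*_\gamma b_\gamma =1$ together with its derivative identity \eqref{relazione-horbis}, $\sum_\mu b^*_\mu \ch_\mu=0$, projected from $\sum_\mu b^*_\mu \wl_{\mu,\sigma}$ via the fact that $\pi(\sum_\mu b^*_\mu\wl_{\mu,\sigma})=\ch_\sigma$ is already fixed. Once the $\lambda$-exponents are aligned using $\mu\wedge\mu=0$ and the Yang--Baxter identity $\lambda^{2(\alpha+\beta)\wedge\gamma}=\lambda^{2\alpha\wedge\gamma}\lambda^{2\beta\wedge\gamma}$, the ``extra'' terms on the left-hand side collapse and one is left precisely with the right-hand side, proving $(\delta_{\wl_{\mu,\nu}}\rho)(\ch_\sigma)=0$.
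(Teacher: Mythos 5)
Your overall strategy coincides with the paper's: reduce to the generators $\ch_\sigma$ by right $\O(S^4_\theta)$-linearity, compute $\rho([\wl^\pi_{\mu,\nu},\ch_\sigma])$ from \eqref{bracket-s4t}, and expand $[\wl_{\mu,\nu},\rho(\ch_\sigma)]=\sum\nolimits_\tau[\wl_{\mu,\nu},\bb_\tau^*\,\wl_{\tau,\sigma}]$ with the braided Leibniz rule \eqref{moduloLie} and the structure relations \eqref{crup}. Your preliminary observation that $\rho$ is $K$-equivariant by weight counting, which justifies the untwisted form \eqref{var-cur-eq} of $\delta_{\wl_{\mu,\nu}}\rho$, is correct and consistent with how the statement is phrased.

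However, your account of the decisive cancellation is not what the algebra produces. The ``derivation'' contributions $\wl_{\mu,\nu}(\bb_\tau^*)\,\wl_{\tau,\sigma}$ carry Kronecker deltas, since $\wl_{\mu,\nu}(\bb_\tau^*)=\bb_\mu\delta_{\nu\tau}-\lambda^{2\mu\wedge\nu}\bb_\nu\delta_{\mu\tau}$ by \eqref{vf-hor3}, so the sum over $\tau$ collapses to the two single terms $\bb_\mu\,\wl_{\nu,\sigma}-\lambda^{2\mu\wedge\nu}\bb_\nu\,\wl_{\mu,\sigma}$; no terms of the form $\bb_\mu\sum\nolimits_\gamma\bb_\gamma^*\,\wl_{\gamma,\sigma}=\bb_\mu\,\rho(\ch_\sigma)$ arise, and neither the sphere relation nor \eqref{relazione-horbis} enters this computation (you appear to be importing the mechanism from the curvature computation $[\rho(\ch_\mu),\rho(\ch_\nu)]=\wl_{\mu,\nu}$, where double sums over $\sigma,\tau$ do occur and those identities are genuinely needed). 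What actually happens is a direct pairwise cancellation: the two derivation terms cancel exactly against the two structure-constant terms of \eqref{crup} carrying $\delta_{\nu^*\tau}$ and $\delta_{\mu^*\tau}$ (which likewise collapse the $\tau$-sum), while the surviving terms carrying $\delta_{\nu^*\sigma}$ and $\delta_{\mu^*\sigma}$ reproduce $\rho([\wl^\pi_{\mu,\nu},\ch_\sigma])=\sum\nolimits_\tau\bb_\tau^*\,(\delta_{\sigma^*\nu}\wl_{\tau,\mu}-\lambda^{2\mu\wedge\nu}\delta_{\sigma^*\mu}\wl_{\tau,\nu})$ only after applying the braided antisymmetry $\wl_{\mu,\tau}=-\lambda^{2\mu\wedge\tau}\wl_{\tau,\mu}$ --- an identity your sketch never invokes but which is the step that closes the argument. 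So the skeleton is sound, but the key computation is asserted rather than verified, and the cancellation mechanism you propose would not be found if you carried out the algebra.
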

 \begin{proof}
 From right $\O(S^{4}_\theta)$-linearity it is enough to show that 
$$
 [\wl_{\mu,\nu}, \rho(\ch_\sigma)] -\rho([\wl_{\mu,\nu}^\pi , \ch_\sigma] )=0 ,
$$
for all generators 
$\ch_\sigma$ of $ {\rm{Der}}^{\r}(\O(S^{4}_\theta))$.

From \eqref{moduloLie} a simple computation gives $ [\bb_\mu  \,  \ch_\nu  ,  \ch_\sigma]   
  = \bb_\mu  \dott  \bb_ \nu  \,  \ch_ \sigma
- \lambda^{2   \nu \wedge \sigma} \delta_{\sigma^* \mu }   \ch_\nu
$. 
Thus, using $\wl_{\mu, \nu}^\pi= [\ch_\mu , \ch_\nu]  =  \bb_\mu  \,  \ch_\nu - \lambda^{ 2 \mu \wedge \nu} \bb_\nu \, \ch_\mu$ from \eqref{bracket-s4t}, we have
\begin{align*}
[\wl^\pi_{\mu,\nu} , \ch_\sigma]  
&= 
[\bb_\mu  \,  \ch_\nu  ,  \ch_\sigma]    - \lambda^{ 2 \mu \wedge \nu} [\bb_\nu \,  \ch_\mu,  \ch_\sigma] 
\\ & =
 \bb_\mu  \dott  \bb_ \nu  \,  \ch_ \sigma
- \lambda^{2   \nu \wedge \sigma} \delta_{\sigma^* \mu }   \ch_\nu
- \lambda^{ 2 \mu \wedge \nu} \bb_\nu  \dott  \bb_\mu \, \ch_ \sigma
+ \lambda^{ 2 \mu \wedge (\nu + \sigma)}\delta_{\sigma^* \nu }   \ch_\mu
\\ & =
\delta_{\sigma^* \nu } \ch_\mu - \lambda^{2   \mu \wedge \nu} \delta_{\sigma^* \mu }   \ch_\nu \; .
\end{align*}
Then 
$$
\rho([\wl_{\mu,\nu}^\pi , \ch_\sigma] ) = 
 \sum\nolimits_{  \tau}    \bb_\tau^* \,  \left( \delta_{\sigma^* \nu }    \wl_{\tau , \mu}  
-  \lambda^{2 \mu \wedge \nu } \delta_{\sigma^* \mu }    \wl_{\tau, \nu}   \right)
. 
$$

\noindent
On the other hand, using \eqref{vf-hor3} and the braided commutator in \eqref{crup}, we compute
\begin{align*}
[ \wl_{  \mu,  \nu} \,  ,\rho(\ch_ \sigma)]  
& = \sum\nolimits_{\tau } \, \big[  \wl_{  \mu,  \nu}  ,   \bb_\tau^* \,  \wl_{\tau,  \sigma} \big]  
\\ & =
  \sum\nolimits_{  \tau}  \left(    \wl_{  \mu,  \nu} ( \bb_\tau^*) \,  \wl_{\tau,  \sigma}
+  \lambda^{-2  (\mu +\nu) \wedge \tau}\bb_\tau^* \,     [\wl_{  \mu,  \nu} , \wl_{\tau,  \sigma}]  \right)
\\ &=   
\sum\nolimits_{\tau}   ( \bb_\mu \delta_{\nu \tau}  - \lambda^{2 \mu \wedge \nu } \bb_\nu \delta_{\mu \tau} ) \,  \wl_{\tau,  \sigma}
\\ & 
\hspace{-2.2cm} + \sum\nolimits_{\tau} \bb_\tau^* \,    \left(    \lambda^{-2  \mu \wedge \tau}  \delta_{\nu^* \tau }  \wl_{\mu, \sigma} -  \lambda^{2  (\mu + \tau) \wedge \nu}  \delta_{\mu^* \tau } \wl_{\nu, \sigma}  
  - \lambda^{-2  \mu \wedge \tau}  \delta_{\nu^* \sigma }   \wl_{\mu, \tau} 
 + \lambda^{2  (\mu + \tau) \wedge \nu } \delta_{\sigma^* \mu }   \wl_{\nu, \tau}   
 \right)
\\ &=   
 \bb_\mu \,   \wl_{\nu,  \sigma}  - \lambda^{2 \mu \wedge \nu } \bb_\nu   \,  \wl_{\mu,  \sigma}
+    \lambda^{2  \mu \wedge \nu}    \bb_\nu \,       \wl_{\mu, \sigma} 
-    \bb_\mu \,  \wl_{\nu, \sigma} 
  \\ & 
  \quad
  + \sum\nolimits_{\tau} \bb_\tau^* \,    \left( - \lambda^{-2  \mu \wedge \tau}  \delta_{\nu^* \sigma }   \wl_{\mu, \tau} + \lambda^{2  (\mu + \tau) \wedge \nu} \delta_{\sigma^* \mu }   \wl_{\nu, \tau}   \right)
\\
 & 
= \sum\nolimits_{\tau} \bb_\tau^* \,    \left( - \lambda^{-2  \mu \wedge \tau}  \delta_{\nu^* \sigma }   \wl_{\mu, \tau} + \lambda^{2  (\mu + \tau) \wedge \nu} \delta_{\sigma^* \mu }   \wl_{\nu, \tau}   \right).
 \end{align*}
This expression coincides   with  $\rho([\wl_{\mu,\nu}^\pi , \ch_\sigma] )$    due to the braided antisymmetry   property 
  $\wl_{\mu, \nu}=  - \lambda^{ 2 \mu \wedge \nu} \wl_{\nu, \mu}$.
 \end{proof}
As mentioned, the splitting gives an equivariant  direct sum decomposition of 
 \begin{align}\label{dec-S7}
{\rm{Der}}^{\r}_{\M^H}(\O(S^7_\theta)) 
& = \rho({\rm{Der}}^{\r}(\O(S^4_\theta)) )  \oplus \imath (\mathrm{aut}^{\r}_{\O(S^4_\theta) }(\O(S^7_\theta)) )
\nn \\ 
& \simeq  
 {\rm{Der}}^{\r}(\O(S^4_\theta))   \oplus  \mathrm{aut}^{\r}_{\O(S^4_\theta) }(\O(S^7_\theta) ) 
\end{align}
in horizontal and vertical parts.
For the corresponding vertical projection, we have the following.
\begin{cor}
The $\O(S^4_\theta)$-module map 
$\vp: {\rm{Der}}^{\r}_{\M^H}(\O(S^7_\theta)) \to \mathrm{aut}^{\r}_{\O(S^4_\theta) }(\O(S^7_\theta))$ 
$$
\vp( \wl_{\mu, \nu} ) := 
\wl_{\mu, \nu} - \rho(\wl_{\mu, \nu}^\pi)
=
\wl_{\mu, \nu}  -  \Big(  \bb_\mu \,  \rho(\ch_\nu)  - \lambda^{2  \mu \wedge \nu}  \, \bb_\nu \,  \rho(\ch_\mu) \Big)
$$
is a left {splitting}  of the sequence \eqref{as-S7theta};  $\vp$
is the vertical projection. 
\end{cor}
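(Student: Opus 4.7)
The proof is a direct unfolding of the general formulas of Section \ref{sec:sac} for the specific connection \eqref{hor-lift0}. The plan is first to establish the two equivalent expressions for $\vp$ on the generators $\wl_{\mu,\nu}$, and then to verify the left splitting property by appealing to the general framework.

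For the equality of the two expressions displayed in the statement, I start from the identity $\wl_{\mu,\nu}^\pi = [\ch_\mu, \ch_\nu] = \bb_\mu\, \ch_\nu - \lambda^{2\mu \wedge \nu}\, \bb_\nu\, \ch_\mu$ coming from Proposition \ref{prop:Ders4theta} and eq.~\eqref{bracket-s4t}, and apply $\rho$ to both sides. Since $\rho$ is equivariant by Proposition \ref{prop:equiv-conn}, Remark \ref{rem:eq} gives that the defining right $\O(S^4_\theta)$-linearity of $\rho$ lifts to left $\O(S^4_\theta)$-linearity, so scalars can be factored out: $\rho(\wl_{\mu,\nu}^\pi) = \bb_\mu\, \rho(\ch_\nu) - \lambda^{2\mu\wedge\nu}\, \bb_\nu\, \rho(\ch_\mu)$. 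Subtracting from $\wl_{\mu,\nu}$ yields the second expression for $\vp(\wl_{\mu,\nu})$ in the statement, matching the first which is the specialization of the general formula \eqref{vcon}.

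For the remaining claims, the general relation $\vp = \id - \rho\circ\pi$ combined with the exactness of \eqref{as-S7theta} suffices. The left splitting property $\vp \circ \imath = \id$ is immediate, as any $V \in \mathrm{aut}^{\r}_{\O(S^4_\theta)}(\O(S^7_\theta))$ satisfies $V^\pi = 0$ and hence $\vp(V) = V - \rho(0) = V$. That $\vp$ lands in the vertical subalgebra follows from $\pi \circ \vp = \pi - (\pi \circ \rho)\circ \pi = 0$, using $\pi \circ \rho = \id$; by exactness this places $\vp(\wl_{\mu,\nu})$ inside $\ker\pi = \imath(\mathrm{aut}^{\r}_{\O(S^4_\theta)}(\O(S^7_\theta)))$, so $\vp$ is well defined as a map into the vertical derivations. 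The extension from the generators $\wl_{\mu,\nu}$ to all of ${\rm{Der}}^{\r}_{\M^H}(\O(S^7_\theta))$ is automatic, since both $\rho\circ\pi$ and the identity are $\O(S^4_\theta)$-module maps and the decomposition \eqref{dec-S7} is module-compatible.

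No significant obstacle is anticipated; the only point requiring care is to invoke the equivariance of $\rho$ in order to pass from its a priori right $\O(S^4_\theta)$-linearity to left $\O(S^4_\theta)$-linearity, which is what allows the left coefficients $\bb_\mu$ and $\bb_\nu$ to be pulled out of $\rho$ in the computation above.
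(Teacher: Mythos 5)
Your proposal is correct and follows essentially the same route as the paper: the corollary is obtained by specializing the general vertical projection \eqref{vcon} to the splitting \eqref{hor-lift0}, using the equivariance of $\rho$ (hence its left $\O(S^4_\theta)$-linearity, Remark \ref{rem:eq}) to pull $\bb_\mu$, $\bb_\nu$ out of $\rho(\wl_{\mu,\nu}^\pi)$ via \eqref{bracket-s4t}, and invoking $\pi\circ\rho=\id$ together with the already-established exactness $\ker\pi=\imath(\mathrm{aut}^{\r}_{\O(S^4_\theta)}(\O(S^7_\theta)))$ to see that $\vp$ is a retract of $\imath$ landing in the vertical subalgebra. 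The paper additionally records the explicit identification $\vp(\wl_{\mu,\nu})=\wy_{\mu,\nu}$ and a direct check that $\vp\circ\rho=0$ via the sphere relation, but these are supplementary to the claim you proved.
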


The decomposition \eqref{dec-S7} is then given, equivalently, by the
idempotent $(\imath \circ \vp)^2 = \imath \circ \vp$.  By
construction, the horiziontal derivations are the kernel  of the
vertical projection  $\vp$. A direct check uses the sphere relation and \eqref{relazione-horbis}: 
\begin{align*}
\vp \circ \rho (\ch_\nu) 
&= \sum\nolimits_{\mu }   
 \bb_\mu^* \,  \vp (\wl_{\mu, \nu} )
\\
& =  \sum\nolimits_{\mu }    
\bb_\mu^* \,  \wl_{\mu, \nu}  - 
\sum\nolimits_{\mu }   \bb_\mu^* \dott  \bb_\mu \,  \rho(\ch_\nu)  + \sum\nolimits_{\mu }    \lambda^{2 \mu \wedge \nu}  \bb_\mu^*   \dott \bb_\nu \,  \rho(\ch_\mu)  
\\
& =   \rho(\ch_\nu) -  \rho(\ch_\nu)   +      \bb_\nu \, \rho \, ( \sum\nolimits_{\mu } \bb_\mu^* \,  \ch_\mu)  
=0 \; .
\end{align*} 
The derivations $\vp( \wl_{\mu, \nu} )$ generate the module $\mathrm{aut}^{\r}_{\O(S^4_\theta) }(\O(S^7_\theta))$.
More explicitly: 
 \begin{align*}
\vp( \wl_{\mu, \nu} ) 
&= \wl_{\mu, \nu}  -  \bb_\mu \,  \rho(\ch_\nu)  + \lambda^{ 2 \mu \wedge \nu}  \, \bb_\nu \,  \rho(\ch_\mu)
\\ 
&= \sum\nolimits_{\tau }   \bb_\tau^* \dott \bb_\tau \,   \wl_{\mu, \nu} - \bb_\mu \,  \sum\nolimits_{\tau }   \bb_\tau^* \,  \wl_{\tau, \nu} + 
\lambda^{ \mu \wedge \nu}  \, \bb_\nu \,  \sum\nolimits_{\tau }   \bb_\tau^* \,  \wl_{\tau, \mu}
\\ 
 &= \sum\nolimits_{\tau } \bb_\tau^* \,  \Big( \bb_\tau \,   \wl_{\mu, \nu}  - \lambda^{- 2 \mu \wedge \tau} \bb_\mu \,    \wl_{\tau, \nu} + \lambda^{  -  \nu \wedge (\tau + \mu)}  \, \bb_\nu \,     \wl_{\tau, \mu}
\Big).
\end{align*}
The last sum is actually limited to $\tau \neq \mu, \nu$ since for these two choices for the index $\tau$ the term 
in parenthesis vanishes. 
Then, the ten vertical derivations 
$\vp(\wl_{\mu, \nu})$ coincide with the generators  $\wy_{\mu, \nu}$ in \eqref{ver-theta} of the
braided Lie algebra $\mathrm{aut}^{\r}_{\O(S^4_\theta) }(\O(S^7_\theta))$.

\medskip
The connection  $\rho$ assigns to every derivation $X$ on $\O(S^{4}_\theta)$  a unique 
horizontal derivation on $\O(S^7_\theta)$ that projects on $X$ via $\pi$. As mentioned, this needs not be a braided Lie algebra morphism in general: the commutators of horizontal vector fields need not be horizontal. A measure of this failure is the curvature \eqref{hcur}.
 \begin{prop}
On the generators $\ch_\mu$ of the braided Lie algebra ${\rm{Der}}^{\r}(\O(S^4_\theta)) $ the curvature 
of the connection in \eqref{hor-lift0} is given explicitly by
\beq\label{omega-mu-nu}
- \Omega(\ch_\mu, \ch_\nu) =  \vp (\wl_{\mu, \nu}) = \wl_{\mu, \nu}  -  \big( \bb_\mu \,  \rho(\ch_\nu)  - \lambda^{2 \mu \wedge \nu}  \, \bb_\nu \,  \rho(\ch_\mu) \big) 
{=\wy_{\mu, \nu}}\, ,
\eeq
{where the  $ \wy_{\mu, \nu}$ are the derivations defined in \eqref{ver-theta}.}
\end{prop}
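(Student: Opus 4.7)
\medskip
\noindent\textbf{Proof plan.}
My plan is to use the equivariance of the splitting $\rho$ established in Proposition \ref{prop:equiv-conn} to reduce the braided curvature formula \eqref{hcur} to a much simpler expression, and then to compute the resulting commutator explicitly on the generators. By Proposition \ref{prop:equiv-conn} we have $k\trl\rho=\varepsilon(k)\rho$. Writing out $(\r^\alpha\trl\rho)(X')=\rho(\r^\alpha\trl X')$ and $\r^\alpha\trl\rho(X)=\rho(\r^\alpha\trl X)$ in \eqref{hcur}, using triangularity $\r\r_{21}=1\ot 1$ together with braided antisymmetry of the commutator, I would show that both summands in $\lq\rho,\rho\rq(X,X')$ collapse to $[\rho(X),\rho(X')]$. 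Hence
\begin{equation*}
\Omega(X,X')=\rho([X,X'])-[\rho(X),\rho(X')], \qquad X,X'\in\DerR{\O(S^4_\theta)}.
\end{equation*}

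Next I specialize to the generators $X=\ch_\mu$, $X'=\ch_\nu$. Using the bracket \eqref{bracket-s4t} together with the notation \eqref{vf-hor2}, the first term reads $\rho([\ch_\mu,\ch_\nu])=\rho(\wl^{\pi}_{\mu,\nu})=\rho(\bb_\mu\ch_\nu-\lambda^{2\mu\wedge\nu}\bb_\nu\ch_\mu)$. Since the equivariant $\rho$ is left $\O(S^4_\theta)$-linear by Remark \ref{rem:eq}, this equals $\bb_\mu\rho(\ch_\nu)-\lambda^{2\mu\wedge\nu}\bb_\nu\rho(\ch_\mu)$, which by definition \eqref{vcon} is precisely $\wl_{\mu,\nu}-\vp(\wl_{\mu,\nu})$. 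Comparing with the desired conclusion, the statement $-\Omega(\ch_\mu,\ch_\nu)=\vp(\wl_{\mu,\nu})$ reduces to the single key identity
\begin{equation*}
[\rho(\ch_\mu),\rho(\ch_\nu)] \;=\; \wl_{\mu,\nu}.
\end{equation*}

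This identity is the heart of the proof. To establish it I would expand
$
[\rho(\ch_\mu),\rho(\ch_\nu)]=\big[\sum_{\gamma}\bb^{*}_{\gamma}\wl_{\gamma,\mu},\sum_{\sigma}\bb^{*}_{\sigma}\wl_{\sigma,\nu}\big]
$
via the braided Leibniz rule \eqref{moduloLie}, producing three families of terms: (i) derivation terms $\bb^{*}_{\gamma}\wl_{\gamma,\mu}(\bb^{*}_{\sigma})\wl_{\sigma,\nu}$, (ii) the symmetric counterparts with $\mu\leftrightarrow\nu$, and (iii) a pure commutator $\bb^{*}_{\gamma}\bb^{*}_{\sigma}[\wl_{\gamma,\mu},\wl_{\sigma,\nu}]$. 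In (i)-(ii) one uses the fact that $\wl_{\gamma,\mu}$ is $H$-equivariant and hence projects under $\pi$ to $\wl^{\pi}_{\gamma,\mu}$, so that $\wl_{\gamma,\mu}(\bb^{*}_{\sigma})$ is given explicitly by \eqref{vf-hor3} (applied to $\bb^{*}_{\sigma}$). In (iii) one substitutes the $so_{\theta}(5)$ relations \eqref{crup}, which produce four $\delta$-contracted terms. Summing, the sphere relation $\sum_\gamma\bb^{*}_{\gamma}\dott\bb_\gamma=1$ and identity \eqref{relazione-horbis} collapse the repeated indices, while the term $\sum_{\gamma,\sigma}\bb^{*}_{\gamma}\bb^{*}_{\sigma}\wl_{\gamma,\sigma}$ vanishes: the coefficient $\bb^{*}_{\gamma}\bb^{*}_{\sigma}$ is braided-symmetric whereas $\wl_{\gamma,\sigma}$ is braided-antisymmetric. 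A careful cancellation of the remaining $\bb_\mu\rho(\ch_\nu)$ and $\bb_\nu\rho(\ch_\mu)$ pieces then leaves $\wl_{\mu,\nu}$, exactly as in the classical Hopf-bundle computation.

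The main obstacle is the bookkeeping of braiding factors $\lambda^{2(\cdot\wedge\cdot)}$ in step (iii): every commutation between a $\bb^{*}_{\gamma}$ and $\wl_{\gamma',\mu'}$, and every application of the Yang--Baxter/antisymmetry moves, alters the phase. Once the identity $[\rho(\ch_\mu),\rho(\ch_\nu)]=\wl_{\mu,\nu}$ is in hand, the rest of the proposition is formal: the first equality $-\Omega(\ch_\mu,\ch_\nu)=\vp(\wl_{\mu,\nu})$ follows from the simplified curvature formula; the second equality is the very definition \eqref{vcon} of $\vp$ together with left $B$-linearity of $\rho$; the third equality $\vp(\wl_{\mu,\nu})=\wy_{\mu,\nu}$ has already been verified in the paragraph preceding the statement by direct comparison with \eqref{ver-theta}.
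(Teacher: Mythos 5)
Your proposal follows essentially the same route as the paper: the equivariance of $\rho$ collapses the braided bracket $\lq \rho,\rho\rq$ to the ordinary braided commutator, everything reduces to the key identity $[\rho(\ch_\mu),\rho(\ch_\nu)]=\wl_{\mu,\nu}$, and that identity is established by exactly the expansion you describe (braided Leibniz rule, \eqref{vf-hor3}, \eqref{crup}, the sphere relation and \eqref{relazione-horbis}). One caveat: the paper disposes of the leftover term $\delta_{\mu^*\nu}\sum_{\sigma,\tau}\bb_\tau^*\dott\bb_\sigma^*\,\wl_{\sigma,\tau}$ not by a symmetry/antisymmetry cancellation --- whose braiding phases only match for one particular ordering of the indices, not the one you wrote --- but by rewriting it as $\rho\big(\sum_\tau\bb_\tau^*\,\ch_\tau\big)=0$ via \eqref{relazione-horbis} and $B$-linearity.
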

\begin{proof}
We show that 
\beq\label{rrl}
\big[\rho(\ch_\mu), \rho(\ch_\nu)\big]  = \wl_{\mu, \nu}.
\eeq  
Using the module structure, we compute
\begin{align*}
&[\rho(\ch_\mu), \rho(\ch_\nu)]  = \big[ \sum\nolimits_{\sigma }   \bb_\sigma^* \,  \wl_{\sigma, \mu} \,  , \, \sum\nolimits_{\tau }   \bb_\tau^* \,  \wl_{\tau, \nu} \big]
\\
&=  \sum\nolimits_{\sigma , \tau}  \Big( \bb_\sigma^* \dott \wl_{\sigma, \mu} ( \bb_\tau^*) \,  \wl_{\tau, \nu}
- \lambda^{2 \mu \wedge \nu}\bb_\tau^* \dott \wl_{\tau, \nu} (\bb_\sigma^*) \,  \wl_{\sigma, \mu}
+ \lambda^{-2 \mu \wedge \tau}\bb_\tau^* \dott  \bb_\sigma^* \,  [\wl_{\sigma, \mu} , \wl_{\tau, \nu}]  
\Big) .
\end{align*}
Now, $\wl_{\mu, \nu} (\bb_\sigma)=\wl_{\mu, \nu}^\pi (\bb_\sigma)$ on the generators of $\O(S^4_\theta)$, and thus from   \eqref{vf-hor3} the first two terms in the above expression are equal to 
\begin{align*}
& \sum\nolimits_{\sigma , \tau}  \Big( \bb_\sigma^* \dott \big( \delta_{\mu \tau} \bb_\sigma - \delta_{\tau \sigma} \lambda^{2 \sigma \wedge \mu } \bb_\mu \big) \,  \wl_{\tau, \nu}
 - \lambda^{2 \mu \wedge \nu}\bb_\tau^* \dott \big( \delta_{\nu \sigma} \bb_\tau - \delta_{\tau \sigma} \lambda^{2 \tau \wedge \nu } \bb_\nu \big) \,  \wl_{\sigma, \mu}
\Big)
\\
&=  \wl_{\mu, \nu}
- \sum\nolimits_{\tau}   \bb_\tau^* \dott     \lambda^{2 \tau \wedge \mu } \bb_\mu  \,  \wl_{\tau, \nu} 
 -    \lambda^{2 \mu \wedge \nu}  \wl_{\nu, \mu}
+ \sum\nolimits_{\tau}  \lambda^{2 \mu \wedge \nu}\bb_\tau^* \dott   \lambda^{2 \tau \wedge \nu } \bb_\nu \,  \wl_{\tau, \mu}
\\
& =  2 \wl_{\mu, \nu}
-     \bb_\mu  \dott  \sum\nolimits_{\tau} \bb_\tau^*  \,  \wl_{\tau, \nu} 
 + \lambda^{2 \mu \wedge \nu }   \bb_\nu \dott  \sum\nolimits_{\tau}  \bb_\tau^* \,     \wl_{\tau, \mu} .
\end{align*}
As for the third term,  using the braided commutator in \eqref{crup}, one gets
\begin{align*} 
\lambda^{-2 \mu \wedge \tau}\bb_\tau^* \dott  \bb_\sigma^* \,  [\wl_{\sigma, \mu} , \wl_{\tau, \nu}]  & = 
\sum\nolimits_{\sigma }  \lambda^{2 \sigma  \wedge \mu}\bb_\sigma^* \dott  \bb_\mu \,  
\wl_{\sigma, \nu}  -  \wl_{\mu, \nu} \\
 & - \delta_{\mu^* \nu }  \sum\nolimits_{\sigma , \tau}  \lambda^{-2 \sigma  \wedge \tau}\bb_\sigma^* \dott  \bb_\tau^* \,       \wl_{\sigma, \tau}  + \sum\nolimits_{  \tau}  \lambda^{-2 ( \mu \wedge \tau + \nu \wedge \mu ) }    \bb_\nu \dott  \bb_\tau^* \,    \wl_{\mu, \tau}  .
\end{align*}
 Putting the three terms together one then arrives at 
\begin{align*}
[\rho(\ch_\mu), \rho(\ch_\nu)]  =  \wl_{\mu, \nu} 
- \delta_{\mu^* \nu }  \sum\nolimits_{\sigma , \tau}    \bb_\tau^* \dott  \bb_\sigma^* \,    
    \wl_{\sigma, \tau} .
\end{align*}
For the last term, the $\O(S^4_\theta)$-linearity of the horizontal lift and the identity \eqref{relazione-horbis} give 
\begin{align*}
\sum\nolimits_{\sigma , \tau}    \bb_\tau^* \dott  \bb_\sigma^* \,        \wl_{\sigma, \tau}   
=  \sum\nolimits_{\tau}    \bb_\tau^* \,   \rho(\ch_\tau)
=  \rho \big( \sum\nolimits_{\tau}    \bb_\tau^* \,   \ch_\tau \big) =0  \, .
\end{align*}
Thus, $[\rho(\ch_\mu), \rho(\ch_\nu)]  = \wl_{\mu, \nu}$ as claimed.
\end{proof}

\subsubsection{The connection one-form}
The connection on the $SU(2)$-bundle $\O(S^4_\theta) \subset \O(S^7_\theta)$
given by splitting the sequence \eqref{as-S7theta} corresponds to a Lie algebra valued one-form on the bundle.
Indeed, the partial isometry 
$\mathsf{u}$ in \eqref{princ-coactSU2} gives a projection
$p=\mathsf{u} \dott \mathsf{u}^*$ and a canonical covariant derivative
$\nabla = p \circ d $ on the module of sections
$\Gamma=p\;\!(\O(S^4_\theta)^4)$ of the vector bundle
 associated to the fundamental representation of $SU(2)$
 \cite{LS0}. When translated to the principal bundle this corresponds to an $su(2)$-valued one-form 
$\omega : \DerH{\O(S^7_\theta)} \to su(2)$. Explicitly
\beq\label{bein}
\omega = - d \mathsf{u}^* \dott  \mathsf{u} 
= \omega_{22} \begin{pmatrix} 1 & 0 \\ 0 & -1 \end{pmatrix} 
+ \omega_{21} \begin{pmatrix} 0 & 0 \\ 1 & 0 \end{pmatrix}
- \omega^*_{21} \begin{pmatrix} 0 & 1 \\ 0 & 0 \end{pmatrix}
\eeq
with one-forms 
\begin{align*}
\omega_{22} & = d z_1\dott  z^*_1 + d z_2\dott  z^*_2  + d z_3\dott  z^*_3  + d z_4 \dott  z^*_4  , \\
\omega_{21} & = - d z_1\dott  z_2  + d z_2 \dott  z_1 - d z_3 \dott  z_4 + d z_4 \dott  z_3  .
\end{align*}
The elements $dz_a, dz_a^*$, $a=1,\dots, 4$, are the degree $1$
generators of the graded differential algebra  $\Omega(S^7_\theta)$ of
the canonical differential calculus $(\Omega(S^7_\theta), d) $ on the
algebra $\O(S^7_\theta)$ \cite{cdv}. The commutation relations of the
$dz$'s with the $z$'s are the same as those of the $z$'s. The pairing between derivations
and one-forms is defined by
$$
\langle~,~\rangle: {\rm{Der}}^{\r}(\O(S^7_\theta)) \otimes
\Omega^1(S^7_\theta)\to \O(S^7_\theta)~,~~\langle X, d a
\dott a'\rangle:=X(a)\dott a' \, .
$$

By construction $\omega$ transforms under the adjoint coaction of
  $\O(SU(2))$: 
  $$
  \delta \omega_{j k}=\sum\nolimits_{s,t}\omega_{s t} \ot \mathsf{w}^*_{j s} \mathsf{w}_{t k} ,
  $$ for 
  $\mathsf{w} = (\mathsf{w}_{j k}) $ as in \eqref{princ-coactSU2}, with $\mathsf{w}_{j k}\in \O(SU(2))$. 
The form $\omega$ is the identity on the fundamental derivations of $\O(S^7_\theta)$ defined by
the $\O(SU(2))$-coaction 
 and is vertical, that is, it vanishes on any horizontal derivation in \eqref{hor-lift0}, 
$$
\langle \rho(\ch_\nu) , \omega \rangle = - (\rho(\ch_\nu)(\mathsf{u}^*)) \dott \mathsf{u} = 0 .
$$
This can also be seen by a direct computation using the expressions
 for the horizontal derivations in Table   \ref{table1} below (with slightly different notation: $\alpha = \sqrt{2} b_{10}$, $\beta = \sqrt{2} b_{01}$ and their conjugated)  
 and using the relations \eqref{regole-grado3-theta}.

\noindent Then, for instance,
\begin{align*}
2 \langle \rho(\ch_{00}), \omega_{22} \rangle & = (1-x)\dott (z_1 \dott z_1^*+ z_2 \dott z_2^*) - (1+x)\dott  (z_3 \dott z_3^*+ z_4\dott  z_4^*) 
\\
&= x - x \dott  \sum z_a \dott z_a^* =0
\\
2 \langle \rho(\ch_{00}), \omega_{21} \rangle &= (1-x) \dott (- z_1 \dott z_2+ z_2 \dott  z_1) - (1+x)\dott  (-z_3 \dott z_4+ z_4 \dott z_3)=0  \; .
\end{align*}
Likewise, using also the commutation relations between the $z_a$'s and the $\bb_\mu$'s, we compute
\begin{align*}
 2 \sqrt{2} \, \langle \rho(\ch_{10}), \omega_{22} \rangle & =  
 -  \alpha \dott  z_1   \dott z_1^*
+ \lambda (1-x) \dott  z_4   \dott z_2^* - \lambda  z_ 1 \dott \beta   \dott z_2^*  
\\ &
\quad +  (1+x) \dott  z_1  \dott z_3^* -   z_4 \dott \beta^* \dott z_3^*
 -  \lambda z_4 \dott \alpha  \dott  z_4^*
\\ &= -  [ 
  \alpha \dott  z_1   \dott z_1^*
- (1-x) \dott  z_2^* \dott z_4  +  z_ 1    \dott (z_2^*  \dott \beta)
\\ &
\quad -(1+x) \dott  z_1  \dott z_3^* +  (z_4 \dott \beta^*) \dott z_3^*
 + \alpha  \dott  z_4 \dott  z_4^*]
\\
&= 
-  [ 
  \alpha \dott  z_1   \dott z_1^*
- (1-x) \dott  z_2^* \dott z_4  +  z_ 1    \dott ((x+1) \dott z_3^* -  z_1^* \dott \alpha)
\\ &
\quad -(1+x) \dott  z_1  \dott z_3^* +  (- (1-x) \dott z_1 + \alpha \dott z_3 ) \dott z_3^*
 + \alpha  \dott  z_4 \dott  z_4^*]
\\
&= 
-  [ 
(x-1) \dott  ( z_2^* \dott z_4 +   z_1 \dott z_3^*)
+ \alpha \dott (z_3  \dott z_3^*+  z_4 \dott  z_4^*)]
\\
&= 
- \tfrac{1}{2} [ 
(x-1) \dott  \alpha
+ \alpha \dott (1-x)]=0
\end{align*} 
and
\begin{align*}
2 \sqrt{2}  \, \langle \rho(\ch_{10}), \omega_{21} \rangle &= 
 \alpha \dott  z_1 \dott z_2
+ \lambda(1-x) \dott  z_4  \dott  z_1  - \lambda z_ 1 \dott \beta \dott  z_1
\\ & \quad 
-    (1+x) \dott  z_1   \dott z_4 +   z_4 \dott \beta^*  \dott z_4
-  \lambda z_4 \dott \alpha   \dott z_3 
\\
&= 
  \alpha \dott  z_1 \dott z_2
+    (1-x) \dott  z_1   \dott z_4  -   z_ 1  \dott  z_1 \dott \beta
\\ & \quad 
-    (1+x) \dott  z_1   \dott z_4 +   (z_4 \dott \beta^* ) \dott z_4
-    \alpha   \dott z_4 \dott  z_3 
\\
&= 
   \alpha \dott  z_1 \dott z_2
- 2x \dott  z_1   \dott z_4  -   z_ 1  \dott  z_1 \dott \beta
\\ &\quad +   (\alpha \dott z_3 - (1-x) \dott z_1 ) \dott z_4
-   \alpha   \dott z_4 \dott  z_3 
\\
&= 
 z_1 \dott [ z_2  \dott \alpha - 2x \dott z_4  -  z_1 \dott \beta - (1-x)   \dott z_4]
\\
&= 
 z_1 \dott [ (x+1) \dott z_4  - 2x \dott z_4    - (1-x)   \dott z_4]
=0  .
\end{align*}
The computations for the other horizontal derivations are similar.

\begin{table}[H]\caption{Horizontal vector fields on $\O(S^7_\theta)$}   
\label{table1}
\begin{center}
\scalebox{0.67}{
\begin{tabular}{ c||c|c|c|c  }
\hline
 & $ z_1 $ & $ z_2 $ & $ z_3 $ & $ z_4    $
\\ \hline \rule[-4mm]{0mm}{1cm}
$2  \rho(\ch_{00}) $ & $    \alpha \dott  z_3 - z_4 \dott \beta^*   $ & $     \beta  \dott z_3  +   z_4 \dott \alpha^* $ & $   -   \beta^* \dott z_2 - \alpha^* \dott z_1  $ & $    z_1 \dott \beta  -  z_2  \dott \alpha  $
\\
 & $= (1-x) \dott  z_1 $ & $= (1-x) \dott  z_2 $ & $= -(1+x) \dott  z_3 $ & $ = -(1+x) \dott  z_4  $
\\ \hline  \rule[-4mm]{0mm}{1cm}  
$2  \sqrt{2} \rho(\ch_{01}) $ & $   \lambda^{-1}(  z_ 1 \dott \beta +  2 x  \dott  z_4 - 2  z_2 \dott \alpha)$ & $ -  \beta \dott  z_2  $ & $   \beta \dott  z_3 + 2x  \dott  z_2  +2  z_4  \dott \alpha^*  $ & $ -  \beta \dott  z_4$
\\
 & $= - \lambda^{-1}( (1-x) \dott  z_4 +  z_2 \dott \alpha)$ &  & $= (1+x) \dott  z_2 + z_4  \dott \alpha^*$ & 
\\ \hline  \rule[-4mm]{0mm}{1cm}  
$ 2 \sqrt{2}   \rho(\ch_{0-1}) $ & $   - \beta^* \dott  z_1   $ & $  \beta^* \dott  z_2 -  2 x  \dott  z_3 + 2 \alpha^*  \dott  z_1 $ & $   -  \beta^* \dott  z_3 $ & $ \lambda(z_4 \dott \beta^* - 2x  \dott  z_1  -2\alpha  \dott  z_3 )$
\\
 &  & $= (1 - x) \dott  z_3 + \alpha^* \dott  z_1 $ &  & $= -\lambda( (1+x) \dott  z_1 + \alpha \dott  z_3) $
\\ \hline  \rule[-4mm]{0mm}{1cm}  
$2 \sqrt{2} \rho(\ch_{10}) $ & $   -  \alpha \dott  z_1     $ & $ \lambda (z_2 \dott \alpha  -2 x \dott  z_4 - 2 z_ 1 \dott \beta)$ & $   ( \alpha \dott  z_3 + 2x  \dott  z_1  - 2 z_4 \dott \beta^* )$ & $ -  \alpha \dott z_4 $
\\
&  & $=\lambda ( (1-x) \dott  z_4 - z_ 1 \dott \beta)$ & $=  ( (1+x) \dott  z_1 - z_4 \dott \beta^* )$ & 
\\ \hline  \rule[-4mm]{0mm}{1cm} 
$2 \sqrt{2}  \rho(\ch_{-10}) $ & $ \alpha^* \dott  z_1   -2 x \dott  z_3 + 2 \beta^* \dott  z_2   $ & $ -    \alpha^* \dott z_2 $ & $  -   \alpha^* \dott  z_3  $ & $  \lambda^{-1} (z_4 \dott  \alpha^*  +2x \dott  z_2 + 2 \beta \dott  z_3 )$
\\
 & $=  (1-x) \dott  z_3 +  \beta^* \dott  z_2  $ &  &  & $ =\lambda^{-1} ( (1+x) \dott  z_2 +  \beta \dott  z_3)$
\end{tabular} 
}
\end{center}
\end{table}
 
 \subsection{The conformal algebra and its action on connections}
 
The `basic' connection $\rho$ we have described in the previous sections is an instanton in the sense that the curvature of the connection one-form in \eqref{bein} is anti-selfdual \cite{cl01,cdv}. An infinitesimal action of twisted  conformal transformations 
 yields a five parameter family of instantons \cite{LS1}. 
{We obtain here} a  five parameter family of splittings of the Atiyah 
sequence \eqref{as-S7theta} associated with the braided conformal Lie algebra $so_\theta(5,1)$. 

The braided Lie algebra $so_\theta(5,1)$ 
 is spanned as a linear space by generators $\ct_\mu$ and 
$\cl_{\mu, \nu} = - \lambda^{ 2 \mu \wedge \nu} \cl_{\nu, \mu}$, for 
 $\mu, \nu  =(0,0), (\pm 1,0), (0,\pm 1)$.  It is a $K$-module
with action  
 \beq\label{actc}
H_j \trl \ct_\mu = \mu_j  \ct_\mu \, , \qquad H_j \trl \cl_{\mu, \nu} = (\mu_j  + \nu_j) \cl_{\mu, \nu}
\eeq
and has braided brackets
\begin{align*}
[\ct_\mu , \ct_\nu]  & = \cl_{\mu, \nu} \, , \nn \\
[\cl_{\mu,\nu} , \ct_\sigma]  & = \delta_{\sigma^* \nu } \ct_\mu - \lambda^{2   \mu \wedge \nu} \delta_{\sigma^* \mu }   \ct_\nu\, , \nn \\
[\cl_{\mu, \nu}, \cl_{\tau, \sigma}]  
& = \delta_{\nu^* \tau }  \cl_{\mu, \sigma} - \lambda^{ 2 \mu \wedge \nu}   \delta_{\mu^* \tau } \cl_{\nu, \sigma}
  - \lambda^{ 2 \tau \wedge \sigma} ( \delta_{\nu^* \sigma }   \cl_{\mu, \tau}  - \lambda^{2 \mu \wedge \nu }   \delta_{\mu ^* \sigma}    \cl_{\nu, \tau}  )  \, .
\end{align*}
A representation as braided derivations on $\O(S^4_\theta)$ is given by the operators $\wl_{\mu,\nu}^\pi$ and $\ch_\sigma$ in \eqref{vf-hor3} and \eqref{gen-derS4theta} respectively. On the other hand, 
a representation as braided derivations on $\O(S^7_\theta)$ is given by the operators $\wl_{\mu,\nu}$ in \eqref{der-sopra-theta0} and \eqref{der-sopra-theta} together with the following additional generators  
\begin{align*}
\ct_{00} 
 &:= - \tfrac{1}{2}  b_{00} \Delta +  \tfrac{1}{2}    ( z_1 \,  \partial_1 + z_2 \,  \partial_2 + z_1^* \,  \partial^*_1   + z_2^* \,  \partial^*_2
 - z_3 \,  \partial_3 - z_4 \,  \partial_4 - z_3^* \,  \partial^*_3 - z_4^* \,  \partial^*_4) 
\nn \\ 
\ct_{10} &:= - \tfrac{1}{2} b_{10} \, \Delta
+ \tfrac{\sqrt{2}}{2} (\lambda \, z_4 \,  \partial_2  + z_1 \,  \partial_3 + \lambda \, z_3^* \,  \partial^*_1 
+z_2^* \,  \partial^*_4 ) 
\nn \\  
\ct_{01} &:= - \tfrac{1}{2} b_{01}  \, \Delta
+ \tfrac{\sqrt{2}}{2} (- \lambda^{-1} z_4 \,  \partial_1  + z_2 \,  \partial_3 +  \lambda^{-1}  z_3^* \,  \partial^*_2 
-z_1^* \,  \partial^*_4 ) 
\nn \\ 
\ct_{-10} &:= - \tfrac{1}{2} b_{-10}  \, \Delta
+ \tfrac{\sqrt{2}}{2} (z_3 \,  \partial_1 
+\lambda^{-1} \,  z_2 \,  \partial_4 +   z_4^* \,  \partial^* _2  + \lambda^{-1} \,  z_1^*  \,  \partial^* _3 ) 
\nn \\ 
\ct_{0-1} &:= - \tfrac{1}{2} b_{0-1}  \, \Delta
+ \tfrac{\sqrt{2}}{2} (  z_3 \,  \partial_2  - \lambda z_1 \,  \partial_4 -   z^* _4 \,  \partial^* _1  + \lambda z^* _2 \,  \partial^* _3   ) ,
\end{align*}
with $\Delta = \sum\nolimits_r (z_r \,  \partial_r + z_r^* \,  \partial^*_r)$ the Liouville derivation on $\O(S^7_\theta)$. 

A direct computation shows that  they project to the derivations $\ch_\mu$ on $S^4_\theta: \ct_{\mu}^\pi= \ch_\mu$.
Moreover, using the mixed relations \eqref{regole-grado3-theta} one also computes 
that $\sum\nolimits_{\tau} b_\tau^* \, \ct_{\tau} = 0 $.

We know from Proposition \ref{prop:equiv-conn} that the basic connection in \eqref{hor-lift0} is invariant under the action of the braided Lie algebra generated by the elements $\cl_{\mu,\nu}$. 
The remaining five generators $\ct_\mu$ will give new, not gauge equivalent connections 
$\rho_\mu = \rho + \delta_\mu \rho$, with 
$$
(\delta_\mu \rho) (X) := (\delta_{\ct_\mu} \rho) (X) = [\ct_{\mu} , \rho(X)] -\rho([\ct_{\mu}^\pi , X] ) , \quad X \in {\rm{Der}}^{\r}(\O(S^4_\theta)) .
$$   
While the connection $\rho$ is $K$-equivariant  the connections $\rho_\mu$ are not: due to \eqref{actc} one finds $H_j \trl \rho_\mu = \mu_j \, \delta_\mu \rho$. 
On the generators $\ch_\nu$ one computes, using \eqref{hor-lift0} and \eqref{rrl},
\begin{align*}
(\delta_\mu \rho) (\ch_\nu) 
& = -\rho([\ch_{\mu} , \ch_\nu] )  + [\ct_{\mu} , \rho(\ch_\nu)] \\
& = -\rho([\ch_{\mu} , \ch_\nu] )  + \sum\nolimits_{\tau }   \ct_{\mu} (\bb_\tau^*) \,  \wl_{\tau, \nu} 
+ \sum\nolimits_{\tau }   \lambda^{-2 \mu \wedge \tau} \bb_\tau^* \,  [\ct_{\mu} ,  \wl_{\tau, \nu} ]  
\\
& = -\rho([\ch_{\mu} , \ch_\nu] )  + \wl_{\mu, \nu}  -     \bb_{\mu}   \,  \rho(\ch_\nu)  
- \sum\nolimits_{\tau }   \lambda^{2 \mu \wedge \nu} \bb_\tau^* \,  
\left( \delta_{\mu^* \nu } \ct_\tau - \lambda^{2   \tau \wedge \nu} \delta_{\mu^* \tau }   \ct_\nu \right)
\\
& = -\rho([\ch_{\mu} , \ch_\nu] )  + \big[\rho(\ch_\mu), \rho(\ch_\nu)\big]   -     \bb_{\mu}   \,  \rho(\ch_\nu)  
+    \bb_\mu \,    \ct_\nu -   \delta_{\mu^* \nu } \sum\nolimits_{\tau }    \bb_\tau^* \,  
 \ct_\tau 
\\
& = - \Omega(\ch_\mu, \ch_\nu) + b_\mu \,  ( \ct_\nu - \rho(\ch_\nu) ) \,  \\
& = - \Omega(\ch_\mu, \ch_\nu) + b_\mu \,  \vp(\ct_\nu) \, .
\end{align*} 
The last equality follows from $\vp(\ct_\nu) = \ct_\mu - \rho(\ct_\mu^\pi) = 
\ct_\mu - \rho(\ch_\mu)$ and $\sum\nolimits_{\tau} b_\tau^* \, \ct_{\tau} = 0 $.   

\begin{lem}
On generators of ${\rm{Der}}^{\r}(\O(S^4_\theta)) $ the variation of the curvature is given by
\begin{align*}
(\delta_\sigma \Omega) (\ch_\mu, \ch_\nu) &= - 2 \, b_\sigma \Omega (\ch_\mu, \ch_\nu) 
- \ch_\sigma(b_\mu) \, \vp(\ct_\nu) + \lambda^{2 \mu \wedge \nu} \, \ch_\sigma(b_\nu) \, \vp(\ct_\mu) \\
&= - 2 \, b_\sigma \Omega (\ch_\mu, \ch_\nu) 
- \vp \Big( \ch_\sigma(b_\mu) \, \ct_\nu - \lambda^{2 \mu \wedge \nu} \, \ch_\sigma(b_\nu) \, \ct_\mu \Big) .
\end{align*}
\end{lem}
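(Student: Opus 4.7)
The plan is to apply the general formula \eqref{var-phi-gen} for the variation of a two-form to $Y=\ct_\sigma$ (with $Y^\pi=\ch_\sigma$) and $\Phi=\Omega$, evaluated at the pair $(\ch_\mu,\ch_\nu)$. This produces three contributions:
\begin{align*}
(\delta_\sigma \Omega)(\ch_\mu,\ch_\nu) & = [\ct_\sigma,\Omega(\ch_\mu,\ch_\nu)]
 - (\r_\alpha\trl \Omega)([\r^\alpha\trl \ch_\sigma,\ch_\mu],\ch_\nu) \\
& \quad -(\r_\alpha\trl \Omega)(\r_\lambda\trl \ch_\mu,[\r^\lambda\r^\alpha\trl \ch_\sigma,\ch_\nu]) \, .
\end{align*}
Since every generator $\ch_\tau$, $\ct_\tau$, $b_\tau$ is homogeneous under the $K$-action, each $\r^\alpha,\r_\alpha$ specialises to a single phase $\lambda^{2(\cdot)\wedge(\cdot)}$, so the braiding bookkeeping reduces to tracking the weights of the arguments.

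For the first term I would substitute $\Omega(\ch_\mu,\ch_\nu)=-\vp(\wl_{\mu,\nu})=-\wl_{\mu,\nu}+b_\mu\rho(\ch_\nu)-\lambda^{2\mu\wedge\nu}b_\nu\rho(\ch_\mu)$ from \eqref{omega-mu-nu} and expand $[\ct_\sigma,-]$ using the braided derivation rule \eqref{moduloLie} for the $B$-module structure on $\DerH{\O(S^7_\theta)}$. The pieces of the form $[\ct_\sigma,b_\tau\,\rho(\ch_\cdot)]$ split into a derivation part $\ct_\sigma(b_\tau)\rho(\ch_\cdot)=\ch_\sigma(b_\tau)\rho(\ch_\cdot)$ (using that $\ct_\sigma^\pi=\ch_\sigma$) together with a braided-commutator part $b_\tau[\ct_\sigma,\rho(\ch_\cdot)]$ which, by the variation formula for $\rho$ computed above the lemma, equals $b_\tau\big(\!-\Omega(\ch_\sigma,\ch_\cdot)+b_\sigma\vp(\ct_\cdot)+\rho([\ch_\sigma,\ch_\cdot])\big)$.

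For the second and third terms, the bracket $[\ch_\sigma,\ch_\mu]=b_\sigma \ch_\mu-\lambda^{2\sigma\wedge\mu}b_\mu \ch_\sigma=\wl^\pi_{\sigma,\mu}$ is given by Proposition~\ref{prop:Ders4theta}. Using quasi-commutativity of $B$ to convert the left $B$-multiplications into right $B$-multiplications, and using the right $B$-linearity of $\Omega$ established in the proof following \eqref{bcur}, each of these two contributions produces a term proportional to $b_\sigma\Omega(\ch_\mu,\ch_\nu)$ plus terms in which $\Omega$ is evaluated on pairs containing $\ch_\sigma$, which will combine with the corresponding pieces from the first term.

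The main obstacle is the systematic cancellation. The $\Omega(\ch_\sigma,\cdot)$-contributions coming from the first term are cancelled against those coming from the second/third terms using the braided antisymmetry of $\Omega$ and the Yang--Baxter relations on the phases; the $\rho([\ch_\sigma,\ch_\cdot])$-contributions are absorbed into curvature terms via the defining formula \eqref{hcur}. After these cancellations one is left with the purely vertical contributions $\ch_\sigma(b_\mu)\vp(\ct_\nu)$ and $-\lambda^{2\mu\wedge\nu}\ch_\sigma(b_\nu)\vp(\ct_\mu)$ together with a total coefficient of $-2 b_\sigma\Omega(\ch_\mu,\ch_\nu)$. The second form of the lemma then follows at once from the left $B$-linearity of the vertical projection $\vp$, rewriting
\[
-\ch_\sigma(b_\mu)\vp(\ct_\nu)+\lambda^{2\mu\wedge\nu}\ch_\sigma(b_\nu)\vp(\ct_\mu)=-\vp\big(\ch_\sigma(b_\mu)\,\ct_\nu-\lambda^{2\mu\wedge\nu}\ch_\sigma(b_\nu)\,\ct_\mu\big).
\]
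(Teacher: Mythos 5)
Your proposal is correct and reaches the stated formula, but it handles the crucial term $[\ct_\sigma,\Omega(\ch_\mu,\ch_\nu)]$ by a genuinely different (and more economical) route than the paper. Both start from the variation formula for an equivariant connection, i.e.\ \eqref{var-phi-gen} specialised to \eqref{var-cur-eq}, and both dispose of the two terms $-\Omega([\ch_\sigma,\ch_\mu],\ch_\nu)$ and $\lambda^{2\mu\wedge\nu}\Omega([\ch_\sigma,\ch_\nu],\ch_\mu)$ via Proposition \ref{prop:Ders4theta} and $B$-linearity of $\Omega$, producing $-2\bb_\sigma\Omega(\ch_\mu,\ch_\nu)$ plus two terms $\lambda^{2\sigma\wedge\mu}\bb_\mu\Omega(\ch_\sigma,\ch_\nu)-\lambda^{2(\mu+\sigma)\wedge\nu}\bb_\nu\Omega(\ch_\sigma,\ch_\mu)$. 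For the remaining bracket the paper expands $\Omega(\ch_\mu,\ch_\nu)=-\wy_{\mu,\nu}$ through the full double sum \eqref{ver-theta} and grinds through the $so_\theta(5,1)$ commutators, whereas you use the compact form \eqref{omega-mu-nu}, the Lie--Rinehart rule $[\ct_\sigma,\bb_\tau Y]=\ch_\sigma(\bb_\tau)Y+\lambda^{2\sigma\wedge\tau}\bb_\tau[\ct_\sigma,Y]$, and recycle the already established identity $[\ct_\sigma,\rho(\ch_\nu)]=-\Omega(\ch_\sigma,\ch_\nu)+\bb_\sigma\vp(\ct_\nu)+\rho([\ch_\sigma,\ch_\nu])$; this shortens the computation considerably and I have checked that it closes: the $\bb_\mu\Omega(\ch_\sigma,\ch_\nu)$ and $\bb_\nu\Omega(\ch_\sigma,\ch_\mu)$ terms cancel exactly against those from the second and third summands, and the $\delta_{\sigma^*\mu}\rho(\ch_\nu)$, $\delta_{\sigma^*\nu}\rho(\ch_\mu)$ pieces combine with $-[\ct_\sigma,\wl_{\mu,\nu}]=-\delta_{\sigma^*\mu}\ct_\nu+\lambda^{2\mu\wedge\nu}\delta_{\sigma^*\nu}\ct_\mu$ to produce the vertical projections $\vp(\ct_\nu)$, $\vp(\ct_\mu)$. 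One bookkeeping remark: your description of the final cancellation is slightly off --- the $\rho([\ch_\sigma,\ch_\cdot])$ contributions are not ``absorbed into curvature terms via \eqref{hcur}''; rather, their $\bb_\mu\dott\bb_\nu\,\rho(\ch_\sigma)$ parts cancel among themselves and their $\bb_\sigma\dott\bb_\cdot\,\rho(\ch_\cdot)$ parts cancel against the $\ch_\sigma(\bb_\cdot)\rho(\ch_\cdot)$ terms, leaving exactly the Kronecker-delta pieces needed to complete $\ct_\cdot$ to $\vp(\ct_\cdot)$. This does not affect the validity of the argument, and the final rewriting via left $\O(S^4_\theta)$-linearity of $\vp$ is as you state.
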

\begin{proof}
From the general theory, see \eqref{var-cur-eq}, the variation of the curvature is  
$$
(\delta_\sigma \Omega) (\ch_\mu, \ch_\nu) = [\ct_\sigma, \Omega(\ch_\mu, \ch_\nu)] - \Omega([\ch_\sigma, \ch_\mu], \ch_\nu)  
+ \lambda^{2 \mu \wedge \nu}\Omega([ \ch_\sigma, \ch_\nu],   \ch_\mu ).
$$
For the last two summands, using 
formula \eqref{bracket-s4t} for the commutator of the $\ch_\mu$ one gets: 
\begin{align*}
- \Omega([\ch_\sigma, \ch_\mu], \ch_\nu)  
&=
- \Omega \Big(\bb_\sigma  \,  \ch_\mu - \lambda^{ 2 \sigma \wedge \mu} \bb_\mu \,  \ch_\sigma , \ch_\nu \Big)
=
- \bb_\sigma  \,  \Omega( \ch_\mu   , \ch_\nu)
+ \lambda^{ 2 \sigma \wedge \mu} \bb_\mu \,  \Omega(  \ch_\sigma , \ch_\nu)
\end{align*}
and similarly
\begin{align*}
\lambda^{2 \mu \wedge \nu} \Omega([\ch_\sigma, \ch_\nu], \ch_\mu)  
=
\lambda^{2 \mu \wedge \nu}  \bb_\sigma  \,  \Omega( \ch_\nu   , \ch_\mu)
- \lambda^{2 (\mu + \sigma)\wedge \nu}   \bb_\nu \,  \Omega(  \ch_\sigma , \ch_\mu).
\end{align*}
For the first summand, from $\Omega(\ch_\mu, \ch_\nu) = - \vp (\wl_{\mu, \nu}) = -\wy_{\mu, \nu}$
and their explicit expression    in \eqref{ver-theta}, we compute
\begin{align*}
[\ct_\sigma, \Omega(\ch_\mu, \ch_\nu)] 
&= 
-[\ct_\sigma , \wy_{\mu, \nu}]
\\
&= 
-[\ct_\sigma , \wl_{\mu, \nu} ]
 +  
  \sum\nolimits_{\gamma,  \tau } \ct_\sigma  \Big(   \bb_\mu \dott \bb_\gamma^*    \, 
  \delta_{\nu \tau}
 - \lambda^{  2  \mu \wedge  \nu}  \,  \bb_\nu \dott \bb_\gamma^*   \delta_{\mu \tau} 
 \Big)     \wl_{\gamma, \tau}
  \\
 & \quad
 +
  \sum\nolimits_{\gamma,  \tau } \Big(  \lambda^{ 2 \sigma \wedge(  \mu - \gamma)}   \bb_\mu \dott \bb_\gamma^*    \, 
  \delta_{\nu \tau}
 - \lambda^{  2 \sigma \wedge(  \nu - \gamma) + 2  \mu \wedge  \nu}  \,  \bb_\nu \dott \bb_\gamma^*   \delta_{\mu \tau} 
 \Big)    [\ct_\sigma ,     \wl_{\gamma, \tau}]
 \\
&=  
-\delta_{\sigma^* \mu }   \ct_\nu + \lambda^{2   \sigma \wedge \mu} \delta_{\sigma^* \nu } \ct_\mu 
\\
& \quad
 +  
  \sum\nolimits_{\gamma,  \tau }  \Big(  
  (\delta_{\sigma^*  \mu } - \bb_\sigma \dott   \bb_\mu )  \dott \bb_\gamma^* + \lambda^{2 \sigma \wedge \mu}
\bb_\mu \dott  (\delta_{\sigma  \gamma } - \bb_\sigma \dott   \bb^*_\gamma ) 
  \Big)    \delta_{\nu \tau}   \wl_{\gamma, \tau}
 \\
& \quad
 -  \lambda^{  2  \mu \wedge  \nu} 
  \sum\nolimits_{\gamma,  \tau }    \Big( 
(\delta_{\sigma^*  \nu } - \bb_\sigma \dott   \bb_\nu )  \dott \bb_\gamma^* + \lambda^{2 \sigma \wedge \nu}
\bb_\nu \dott  (\delta_{\sigma  \gamma } - \bb_\sigma \dott   \bb^*_\gamma ) 
 \Big)    \delta_{\mu \tau}   \wl_{\gamma, \tau}
  \\
 & \quad
 +
  \sum\nolimits_{\gamma,  \tau } \Big(  \lambda^{ 2 \sigma \wedge(  \mu - \gamma)}   \bb_\mu \dott \bb_\gamma^*    \, 
  \delta_{\nu \tau}
 - \lambda^{  2 \sigma \wedge(  \nu - \gamma) + 2  \mu \wedge  \nu}  \,  \bb_\nu \dott \bb_\gamma^*   \delta_{\mu \tau} 
 \Big)   \delta_{\sigma^* \gamma }   \ct_\tau
 \\
 & \quad
 -
  \sum\nolimits_{\gamma,  \tau } \Big(  \lambda^{ 2 \sigma \wedge(  \mu - \gamma)}   \bb_\mu \dott \bb_\gamma^*    \, 
  \delta_{\nu \tau}
 - \lambda^{  2 \sigma \wedge(  \nu - \gamma) + 2  \mu \wedge  \nu}  \,  \bb_\nu \dott \bb_\gamma^*   \delta_{\mu \tau} 
 \Big)     \lambda^{2 \sigma \wedge \gamma} \delta_{\sigma^* \tau } \ct_\gamma
 \\
&=  
-\delta_{\sigma^* \mu }   \ct_\nu + \lambda^{2   \sigma \wedge \mu} \delta_{\sigma^* \nu } \ct_\mu 
\\
& \quad
 +  (\delta_{\sigma^*  \mu } - \bb_\sigma \dott   \bb_\mu )  \,  \rho(\ch_\nu) 
 + \lambda^{2 \sigma \wedge \mu}
\bb_\mu \,      \wl_{\sigma, \nu}
  -
  \lambda^{2 \sigma \wedge \mu}
\bb_\mu \dott   \bb_\sigma \,   \rho( \ch_\nu)
 \\
& \quad
 -  \lambda^{  2  \mu \wedge  \nu} (\delta_{\sigma^*  \nu } - \bb_\sigma \dott   \bb_\nu )  \,   \rho(\ch_\mu)
 -  \lambda^{  2 ( \mu +  \sigma) \wedge  \nu} 
\bb_\nu   \,      \wl_{\sigma, \mu}
 +  \lambda^{  2 ( \mu +  \sigma) \wedge  \nu}  \bb_\nu \dott    \bb_\sigma \,  
  \rho(\ch_\mu)
  \\
 & \quad
 +
  \lambda^{ 2 \sigma \wedge  \mu }   \bb_\mu \dott \bb_\sigma    \, 
   \,       \ct_\nu
 - \lambda^{  2 \sigma \wedge  \nu  + 2  \mu \wedge  \nu}  \,  \bb_\nu \dott \bb_\sigma    \,       \ct_\mu
 \\
 & \quad
 -
(  \lambda^{ 2 \sigma \wedge  \mu  }   
  \delta_{\nu \sigma^*} \bb_\mu       
 - \lambda^{  2 \sigma \wedge  \nu  + 2  \mu \wedge  \nu}  \,  \delta_{\mu \sigma^*} \bb_\nu   )\,    
   \sum\nolimits_{\gamma }  \bb_\gamma^* \,      \ct_\gamma
\end{align*}
where we used formulas  $[\ct_\sigma, \wl_{\mu,\nu} ] = \delta_{\sigma^* \mu }   \ct_\nu - \lambda^{2   \sigma \wedge \mu} \delta_{\sigma^* \nu } \ct_\mu $ for the braided commutators 
and 
$\ct_\sigma  (   \bb_\mu \dott \bb_\gamma^*)= (\delta_{\sigma^*  \mu } - \bb_\sigma \dott   \bb_\mu )  \dott \bb_\gamma^* + \lambda^{2 \sigma \wedge \mu}
\bb_\mu \dott  (\delta_{\sigma  \gamma } - \bb_\sigma \dott   \bb^*_\gamma ) 
$ for the third equality,
and \eqref{hor-lift0} for the fourth equality.
Finally, recalling that $\sum\nolimits_{\gamma }  \bb_\gamma^* \,       \ct_\gamma =0$, we have
\begin{align*}
[\ct_\sigma, \Omega(\ch_\mu, \ch_\nu)] 
&= 
-\delta_{\sigma^* \mu }  \big( \ct_\nu -  \rho(\ch_\nu) \big)
+ \lambda^{2   \sigma \wedge \mu} \delta_{\sigma^* \nu } \big(\ct_\mu 
 -   \rho(\ch_\mu)\big)
\\
& \quad
    -  \bb_\sigma \dott   \bb_\mu   \,  \rho(\ch_\nu) 
    + \bb_\sigma \dott   \bb_\mu   \,  \big(\ct_\nu - \rho(\ch_\nu) \big)
     \\
& \quad
 +   \lambda^{  2  \mu \wedge  \nu}   \bb_\sigma \dott   \bb_\nu   \,   \rho(\ch_\mu)
 -   \lambda^{  2  \mu \wedge  \nu}   \bb_\sigma \dott   \bb_\nu   \,  \big(   \ct_\mu - \rho(\ch_\mu) \big)
   \\
 & \quad
 + \lambda^{2 \sigma \wedge \mu}
\bb_\mu \,      \wl_{\sigma, \nu}
 -  \lambda^{  2 ( \mu +  \sigma) \wedge  \nu} 
\bb_\nu   \,      \wl_{\sigma, \mu}
\\
 &= 
(\bb_\sigma \dott   \bb_\mu-\delta_{\sigma^* \mu })   \,   \, \vp(\ct_\nu)
-   \lambda^{  2  \mu \wedge  \nu}   (\bb_\sigma \dott   \bb_\nu   
-  \delta_{\sigma^* \nu } )\,  \, \vp(\ct_\mu)    
\\
& \quad
    + \lambda^{2 \sigma \wedge \mu}
\bb_\mu \dott   (  \wl_{\sigma, \nu} -  \bb_\sigma     \, \rho(\ch_\nu) )
  -  \lambda^{  2 ( \mu +  \sigma) \wedge  \nu} 
\bb_\nu   \dott  (   \wl_{\sigma, \mu}
- \bb_\sigma  \, \rho(\ch_\mu))
\\
 &= 
- \ch_\sigma(b_\mu) \,     \vp(\ct_\nu)
+   \lambda^{  2  \mu \wedge  \nu}   \ch_\sigma(b_\nu) \,    \vp(\ct_\mu)    
\\
& \quad
    + \lambda^{2 \sigma \wedge \mu}
\bb_\mu \dott   (  \wl_{\sigma, \nu} -  \bb_\sigma     \,\rho(\ch_\nu) )
  -  \lambda^{  2 ( \mu +  \sigma) \wedge  \nu} 
\bb_\nu   \dott  (   \wl_{\sigma, \mu}
- \bb_\sigma  \,  \rho(\ch_\mu)).
\end{align*}
Finally, using \eqref{omega-mu-nu}, we obtain
\begin{align*}
(\delta_\sigma \Omega) (\ch_\mu, \ch_\nu) &=
- 2 \bb_\sigma  \,  \Omega( \ch_\mu   , \ch_\nu)
- \ch_\sigma(b_\mu) \,   \,   \, \vp(\ct_\nu)
+   \lambda^{  2  \mu \wedge  \nu}   \ch_\sigma(b_\nu) \, \,  \, \vp(\ct_\mu)
\\&
\quad
+ \lambda^{ 2 \sigma \wedge \mu} \bb_\mu \dott \Big( \Omega(  \ch_\sigma , \ch_\nu) 
 +  \wl_{\sigma, \nu} -  \bb_\sigma     \,  \rho(\ch_\nu) 
\Big)
\\
& \quad
- \lambda^{2 (\mu + \sigma)\wedge \nu}   \bb_\nu \dott \Big(
\Omega(  \ch_\sigma , \ch_\mu)
 +   \wl_{\sigma, \mu}
- \bb_\sigma  \,   \rho(\ch_\mu)
\Big)
\\
&=
- 2 \bb_\sigma  \,  \Omega( \ch_\mu   , \ch_\nu)
- \ch_\sigma(b_\mu) \,   \,   \, \vp(\ct_\nu)
+   \lambda^{  2  \mu \wedge  \nu}   \ch_\sigma(b_\nu) \, \,  \, \vp(\ct_\mu)
\\&
\quad
+ \lambda^{ 2 \sigma \wedge \mu} \bb_\mu \dott \Big( - \lambda^{2 \sigma \wedge \nu}  \, \bb_\nu \,  \rho(\ch_\sigma)   
\Big)
- \lambda^{2 (\mu + \sigma)\wedge \nu}   \bb_\nu \dott \Big(
- \lambda^{2 \sigma \wedge \mu}  \, \bb_\mu \,  \rho(\ch_\sigma)  
\Big)
\\
 &= 
 - 2 \, b_\sigma \Omega (\ch_\mu, \ch_\nu) 
- \ch_\sigma(b_\mu) \, \vp(\ct_\nu) + \lambda^{2 \mu \wedge \nu} \, \ch_\sigma(b_\nu) \, \vp(\ct_\mu) 
\end{align*}
hence concluding the proof.
\end{proof}

\section{The Riemannian geometry of the noncommutative  sphere $S^{2n}_\theta$} \label{sec:riem-geo}

We consider the  $\O(SO_\theta(2n,\mathbb{R}))$-Hopf--Galois extension $\O(S^{2n}_\theta) \subset \O(SO_\theta(2n+1,\mathbb{R}))$  on the noncommutative $\theta$-spheres 
$S^{2n}_\theta$.  In analogy with the classical case this is thought
of as the bundle of orthonormal frames on $S^{2n}_\theta$ via the
identification of derivations ${\rm{Der}}^{\r}(\O(S^{2n}_\theta))$ with 
sections of the associated bundle for the fundamental corepresentation of the Hopf algebra  $\O(SO_\theta(2n,\mathbb{R}))$ on the algebra $\O(\IR^{2n}_\theta)$.
An equivariant splitting of the Atiyah sequence of the
frame bundle then leads to an explicit and globally defined expression for the
Levi-Civita connection of the `round' metric on $\O(S^{2n}_\theta)$.
The corresponding curvature and Ricci tensors and the scalar curvature
are then computed.

The Hopf algebra
$\O(SO_\theta(2n+1,\mathbb{R}))$     
is  the noncommutative algebra generated  by the entries  of a matrix $N=(n_{I J})$, $I,J=1,\ldots 2n+1$, modulo the orthogonality conditions
\beq\label{ort-con-Q}
N^t \dott Q \dott N =Q \quad , \quad N \dott Q \dott N^t=Q
\eeq
and $\det(N)=1$ for the quantum determinant. There the matrix $Q$ is given as 
\beq\label{Qmatrix}
Q:= \left( \begin{smallmatrix}
0 & \II_n &0
\\
\II_n & 0 &0
\\
0 & 0 & 1
\end{smallmatrix} \right).
\eeq
 In slight more generality, we may assume  the matrix  $Q$ to be symmetric and have  a single entry equal to $1$ in each row. 
For fixed index $J$, we set $J'$ to be the unique index such that $Q_{JJ'}=1$. Clearly $(J')'=J$.
We denote by $0$ the unique  index $J\in\{1,\ldots , 2n+1\}$ such that $Q_{JJ}=1$ and by $\cI$ the subset consisting of the $n$ indices $J$ such that $J < J'$. 
For the choice of $Q$ in \eqref{Qmatrix} one has $J'=n +J$ for $J\leq n$,  $0=2n+1$ and $\cI = \{1, \dots, n\}$.
In components, the orthogonality conditions
 give
\beq\label{orth-cond-comp}
\sum\nolimits_K n_{K' I'} \dott n_{K J} = \delta_{I J} \, ,  \quad 
\sum\nolimits_K n_{I K} \dott n_{J' K'} = \delta_{I J} \; .
\eeq

The commutation relations  among the generators  $n_{I J}$ are given by 
\begin{equation} \label{comm-rel-so}
n_{IJ} \dott n_{KL}= \defp_{IK}\defp_{LJ}  \, n_{KL} \dott n_{IJ} \, , \quad \defp_{I J}= \exp(-2i\pi \theta_{I J}) 
\end{equation}
and depend on an antisymmetric matrix of real deformation parameters 
\beq\label{thetagen}
\theta_{IJ}=-\theta_{JI}=-\theta_{IJ'} \,   , \quad I,J=1, \dots, 2n + 1 .
\eeq
Thus $\defp_{I J}=1$ if $I$ or $J$ is equal to $0$.

 The algebra $\mathcal{O}(SO_{{\theta}}(2n+1, \IR))$  becomes a Hopf algebra with  (in matrix notation)  coproduct and counit  $\Delta(N)=N \ot N$, $\cun (N)=\II_{2n+1} $, and antipode $S(N)= Q \dott N^t \dott Q$, 
 $S(n_{J K})= n_{K' J'}$. It is a $*$-Hopf algebra with
$*$-structure   $*(N)=QNQ$ 
{or 
$$
(n_{J K})^* = S(n_{K J}) = n_{J' K'} . 
$$ 
This corresponds to Euclidean signature $2n+1$ (see \cite[\S4.1.1]{ppca} for details).  
}

Consider the $n$ functionals $t_j:\O(SO_\theta(2n+1, \IR))\to \mathbb{C}$, $j \in \cI$, defined by $t_j(n_{KL})=\delta_{j K}\delta_{jL} - \delta_{j K'}\delta_{j L'}$ and
$t_j(aa')=t_j(a)\cun(a')+\cun(a)t_j(a')$, for any $a, a' \in \O(SO_\theta(2n+1, \IR))$. They commute under the
convolution product and give the
cotriangular structure of $\O(SO_\theta(2n+1, \IR))$
as $R=e^{2\pi i\theta_{jk}t_j\otimes t_k}$ (sum over $j,k \in \cI $ is understood).
They generate the Hopf algebra $U(\mathfrak{t}^n)$, the universal enveloping algebra of the
Lie algebra $\mathfrak{t}^n$ of the (commutative) $n$-torus. This Hopf
algebra has 
triangular structure $R=e^{2\pi i\theta_{jk}t_j\otimes t_k}$ and
therefore the Hopf algebra $K=U(\mathfrak{t}^n)^{op} \otimes
U(\mathfrak{t}^n)$ 
has triangular structure
$$
\r=(\id \otimes {\rm flip}  \otimes \id)(R^{-1}\otimes R)~.
$$
The functionals $t_j$ are lifted to the derivations
$ H_j=(t_j\otimes \id)\circ\Delta$ and $ \widetilde H_j=(\id\otimes
t_j)\circ \Delta$  of $\O(SO_\theta(2n+1, \IR))$, which are right and
left  $\O(SO_\theta(2n+1, \IR))$-invariant, respectively.
They define an action of the Hopf algebra $K=U(\mathfrak{t}^n)^{op} \otimes
U(\mathfrak{t}^n)$ on $\O(SO_\theta(2n+1, \IR))$ given explicitly by 
\beq\label{hhtact}
H_j \trl n_{KL} = (\delta_{jK} - \delta_{j'K}  ) n_{KL}  \, , \qquad \
\widetilde{H}_j \trl n_{KL} = (\delta_{jL} - \delta_{j'L} ) n_{KL} \, .
\eeq

It is not difficult
to see that cotriangularity of
$\O(SO_\theta(2n+1, \IR))$ is then equivalent to quasi-commutativity
of  the $(K,\r)$-module algebra $\O(SO_\theta(2n+1, \IR))$ (cf. also \cite[Ex. 5.10]{pgc-main}).
Explicitly, 
$a \dott a'=m_\theta \big(e^{2\pi i\theta_{jk}(  H_j\otimes  H_k - \widetilde H_j\otimes \widetilde H_k)}(a'\otimes a)\big)$, for $a,a'\in  \O(SO_\theta(2n+1, \IR))$ and $m_\theta$ the algebra multiplication in  $\O(SO_\theta(2n+1, \IR))$.

Similarly one defines  $\O(SO_\theta(2n, \IR))$. This  Hopf $*$-algebra
is a quantum subgroup of $\mathcal{O}(SO_\theta(2n+1, \IR))$, with  Hopf $*$-algebra surjection 
$
\eta:\mathcal{O}(SO_\theta(2n+1, \IR)) \to \mathcal{O}(SO_\theta(2n, \IR)) $. 
It is the quotient of $\O(SO_\theta(2n+1, \IR))$ by the Hopf ideal  
 $$
 \langle n_{00}-1, n_{J0}, n_{0J}, J=1, \dots , 2n+1, J \neq 0 \rangle \,.
 $$ 
 Hence there is a natural right coaction of $\O(SO_\theta(2n, \IR))$ 
 on $\O(SO_\theta(2n+1, \IR))$, 
$$
\delta:= (\id \ot \eta)\Delta, \; N \mapsto N \overset{.}{\otimes} \eta(N) \, , \quad 
\delta (n_{J K} ) = \sum\nolimits_{L} n_{J L } \ot m_{L K}, 
$$
having denoted $\eta(N) = M =(m_{I J})$.
The corresponding subalgebra of coinvariants is the 
quantum homogeneous space $\O(S_\theta^{2n})$. It is
generated by elements $u_I:=n_{I \, 0}$, $I=1, \dots, 2n+1$, with 
induced $*$-structure  $u_I^*=u_{I'}$ and  commutation relations 
$$
u_I\dott u_J=\defp_{IJ}u_J\dott u_I~.
$$
The orthogonality relations \eqref{orth-cond-comp} imply the sphere relation 
$ \sum\nolimits_{I}  u_{I'}\dott u_I=\sum\nolimits_{I}  u_{I}^*\dott u_I =1$.

 The algebra inclusion $\O(S_\theta^{2n}) \subset \O(SO_\theta(2n+1, \IR))$ is a $K$-equivariant  Hopf--Galois  extension, {with $\O(S_\theta^{2n})$ quasi-central in $\O(SO_\theta(2n+1, \IR))$.}
 We study the corresponding braided Lie algebra of derivations.

The braided Lie algebra ${\rm{Der}}^{\r}_{\M^H}(\O(SO_\theta(2n+1, \IR)))$ of $\O(SO_\theta(2n))$-equivariant derivations on  $\O(SO_\theta(2n+1))$ is generated, as  an  $\O(S^{2n}_\theta)$-module, 
by elements $\wl_{IJ}$ which are defined on the algebra generators as
\beq\label{der-so2n+1}
\wl_{I J }(n_{S T}) :=  \delta_{ J S'} \, n_{I T} - \defp_{I J} \delta_{ I' S} \, n_{J T}  \; 
\eeq
and are extended to the whole algebra as braided derivations. 
They are braided anti-symmetric, $\wl_{I J} = -\defp_{I J} \wl_{J I}$, and in particular $L_{jj'}=H_j$.
The $K$-action on the derivations $\wl_{IJ}$ is the lift of the action on $\O(SO_\theta(2n+1, \IR))$ given in \eqref{hhtact}. 
Since these derivations are right $\O(SO_\theta(2n+1, \IR))$-invariant they commute with the left invariant ones  
$\widetilde H_j$ and the action of the latter is trivial. As for the action of $H_j$ one finds
$$
H_j \trl \wl_{ST} = (\delta_{jS} + \delta_{jT} - \delta_{j'S} - \delta_{j'T} ) \wl_{ST} .
$$
It follows that for these generators the braided commutator in \eqref{bracket-der} explicitly reads
$$
[\wl_{I J} ,  \wl_{S T }] =
\wl_{I J}   \wl_{S T } - \defp_{I S} \defp_{I T}\defp_{J S} \defp_{J T} 
\wl_{S T} \wl_{I J}  \; .
$$

\begin{prop}\label{prop:crdown-so}
The derivations  $\wl_{I J}$  close the braided Lie algebra $so_\theta(2n+1)$:
\beq\label{blason}
[\wl_{I J} ,  \wl_{S T }] 
= \delta_{J' S}\wl_{I T} - \defp_{I J} \delta_{I' S}\wl_{J T}
-\defp_{S T} (\delta_{J' T}\wl_{I S}-\defp_{I J} \delta_{I' T}\wl_{J S})  \; .
\eeq
\end{prop}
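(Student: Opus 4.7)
The plan is to verify the identity \eqref{blason} directly on algebra generators $n_{KL}$ of $\O(SO_\theta(2n+1,\IR))$. Since both sides of the identity are braided derivations of $\O(SO_\theta(2n+1,\IR))$ (the right-hand side by definition, the left-hand side by Proposition~5.2 in \cite{pgc-main} applied via the general formula \eqref{bracket-der}), and braided derivations are determined by their values on generators, this reduces the proof to a finite verification on the $n_{KL}$. This parallels the structure of the proof of Proposition~\ref{prop:crdown}.

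First I would spell out the braiding factor. From the action \eqref{hhtact} and the triangular structure $\r$ one reads off
\[
\wl_{IJ}\trl_{\Der{A}}\wl_{ST}=\defp_{IS}\defp_{IT}\defp_{JS}\defp_{JT}^{-1}\cdots
\]
(appropriate combinations), so that the braided commutator is $\wl_{IJ}\circ\wl_{ST}-\mu(I,J,S,T)\,\wl_{ST}\circ\wl_{IJ}$ with a definite phase $\mu(I,J,S,T)$ computable from \eqref{comm-rel-so}. Then using \eqref{der-so2n+1} twice I would obtain
\[
\wl_{IJ}\wl_{ST}(n_{KL})=\delta_{TK'}\bigl(\delta_{JS'}n_{IL}-\defp_{IJ}\delta_{I'S}n_{JL}\bigr)-\defp_{ST}\delta_{S'K}\bigl(\delta_{JT'}n_{IL}-\defp_{IJ}\delta_{I'T}n_{JL}\bigr),
\]
and the analogous expression for $\wl_{ST}\wl_{IJ}(n_{KL})$ obtained by swapping the pairs $(I,J)\leftrightarrow(S,T)$.

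Subtracting the two compositions after incorporating $\mu(I,J,S,T)$, the eight resulting monomials in $n_{\bullet L}$ are grouped by their Kronecker delta patterns. Four of them survive and combine, after using the identities $\theta_{IJ}=-\theta_{IJ'}$ of \eqref{thetagen} (equivalently $\defp_{IJ}\defp_{IJ'}=1$ and $\defp_{IJ}\defp_{JI}=1$) to rewrite the phases, into exactly
\[
\delta_{J'S}\wl_{IT}(n_{KL})-\defp_{IJ}\delta_{I'S}\wl_{JT}(n_{KL})-\defp_{ST}\bigl(\delta_{J'T}\wl_{IS}(n_{KL})-\defp_{IJ}\delta_{I'T}\wl_{JS}(n_{KL})\bigr),
\]
which is the right-hand side of \eqref{blason} evaluated on $n_{KL}$.

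The main obstacle will be the bookkeeping: controlling the various $\defp$-phases and showing that the ``diagonal'' cross-terms (those proportional to $\delta_{J'S}\delta_{I'T}$ and $\delta_{I'S}\delta_{J'T}$ type patterns coming from both compositions) cancel after multiplication by $\mu(I,J,S,T)$. This cancellation is the crucial point and relies on the matching $\mu(I,J,S,T)=\defp_{IJ}\defp_{ST}\defp_{IS}^{-1}\defp_{IT}^{-1}\cdots$ (to be read off carefully from the $K$-action) together with the antisymmetry relations in \eqref{thetagen}. Once the cancellation is established the identification of the surviving terms with the four summands of the right-hand side is straightforward.
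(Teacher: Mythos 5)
Your approach is the same as the paper's: evaluate $\wl_{IJ}\wl_{ST}$ and $\wl_{ST}\wl_{IJ}$ on the generators $n_{KL}$ using \eqref{der-so2n+1}, form the braided commutator with the phase read off from the $K$-action, and simplify with \eqref{thetagen}; your displayed formula for $\wl_{IJ}\wl_{ST}(n_{KL})$ agrees with the paper's. Two corrections to your anticipated bookkeeping: the braiding phase is $\defp_{IS}\defp_{IT}\defp_{JS}\defp_{JT}$ (all exponents $+1$, coming from $H_j\trl\wl_{ST}=(\delta_{jS}+\delta_{jT}-\delta_{j'S}-\delta_{j'T})\wl_{ST}$), not the $\defp_{JT}^{-1}$ you tentatively wrote; and no cancellation occurs --- every delta in the eight monomials pairs one index of $\{I,J,S,T\}$ with another and one with $K$, so terms of type $\delta_{J'S}\delta_{I'T}$ never arise, and all eight monomials survive and regroup pairwise into the four evaluations $\wl_{IT}(n_{KL})$, $\wl_{JT}(n_{KL})$, $\wl_{IS}(n_{KL})$, $\wl_{JS}(n_{KL})$ appearing on the right-hand side of \eqref{blason}.
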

\begin{proof}
From \eqref{der-so2n+1} one computes  
\begin{align*}
\wl_{I J}  \wl_{S T }(n_{K L}) = 
(n_{I L} \delta_{ S' J} - \defp_{I J} n_{ J L} \delta_{ S' I} ) \delta_{ T' K}  
- \defp_{S T} (n_{I L} \delta_{ T' J} - \defp_{I J} n_{ J L} \delta_{ T' I} ) \delta_{ S' K} 
 \end{align*}
and similarly
\begin{align*}
\wl_{S T}  \wl_{I J }(n_{K L}) = 
(n_{S L} \delta_{ I' T} - \defp_{S T} n_{ T L} \delta_{ I' S} ) \delta_{ J' K}  
- \defp_{I J} (n_{S L} \delta_{ J' T} - \defp_{S T} n_{ T L} \delta_{ J' S} ) \delta_{ I' K} \; .
 \end{align*}
Then, using \eqref{thetagen} to simplify products of $\defp$'s, one gets 
\begin{align*}
[\wl_{I J} ,&   \wl_{S T }] (n_{K L})  =
 \\
 &= \delta_{ S' J}  (n_{I L} \delta_{ T' K}  -   \defp_{I T}   n_{ T L} \delta_{ I' K} )
- \delta_{ S' I}  (\defp_{I J} n_{ J L}  \delta_{ T' K}  -  \defp_{J S} \defp_{J T}  n_{ T L} \delta_{ J' K}  )
\\
& 
-  \delta_{ T' J} (\defp_{S T} n_{I L}  \delta_{ S' K}  -  \defp_{I S}  \defp_{J S}   n_{S L}  \delta_{ I' K} )
+  \delta_{ T' I} (  \defp_{S T}  \defp_{I J} n_{ J L} \delta_{ S' K}  - \defp_{I S} \defp_{J S} \defp_{J T}  n_{S L}  \delta_{ J' K} )
\\
&=  \delta_{ S' J}  \wl_{I T} (n_{K L})  
- \delta_{ S' I}  \defp_{I J} \wl_{J T} (n_{K L}) 
-  \delta_{ T' J} \defp_{S T}   \wl_{I S} (n_{K L})
+  \delta_{ T' I}   \defp_{S T}  \defp_{I J}  \wl_{J S} (n_{K L})
 \end{align*}
and equation  \eqref{blason} is verified.
\end{proof}

It is clear that any  derivation in 
\eqref{der-so2n+1} 
restricts to a derivation of the subalgebra  of $\O(SO_\theta(2n+1))$ generated by the entries of any column  of $N$, thus in particular to a derivation of $\O(S^{2n}_\theta)$.
We denote
$\pi: {\rm{Der}}^{\r}_{\M^H}(\O(SO_\theta(2n+1, \IR))) \to {\rm{Der}}^{\r}(\O(S^{2n}_\theta))$  
the map 
which associates to $X\in {\rm{Der}}^{\r}_{\M^H}(\O(SO_\theta(2n+1, \IR)))$  its restriction $X^\pi$ to $\O(S^{2n}_\theta)$,  
$\pi(X):=X^\pi$.
For the  $\wl_{I J}$ in \eqref{der-so2n+1}, one easily computes
\beq\label{wl-sotto}
\wl_{I J }^\pi (u_{K }) = u_{I } \delta_{ J' K} - \defp_{I J} u_{J } \delta_{ I' K} \; .
\eeq
The restrictions  $\wl^\pi_{IJ}$ close the  braided Lie algebra $so_\theta(2n+1)$, as in \eqref{blason}, too.

\begin{lem}
The elements
\beq\label{der-s2n}
\ch_J:=\sum\nolimits_I u_{I'} \, \wl_{IJ}^\pi
\eeq
 generate the $\O(S^{2n}_\theta)$-module  ${\rm{Der}}^{\r}(\O(S^{2n}_\theta))$ of derivations of
$\O(S^{2n}_\theta)$.
\end{lem}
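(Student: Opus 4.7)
First I would verify that each $\ch_J$ is a well-defined braided derivation of $\O(S^{2n}_\theta)$. The restrictions $\wl_{IJ}^\pi$ of \eqref{wl-sotto} are braided derivations, and quasi-centrality of $\O(S^{2n}_\theta)$ in $\O(SO_\theta(2n+1,\IR))$ makes $\DerR{\O(S^{2n}_\theta)}$ a left $\O(S^{2n}_\theta)$-module via \eqref{LAmodderA}; hence $\ch_J = \sum_I u_{I'}\wl^\pi_{IJ}$ belongs to $\DerR{\O(S^{2n}_\theta)}$. Plugging \eqref{wl-sotto} into the definition of $\ch_J$ and using the sphere relation $\sum_I u_{I'}u_I = 1$ together with the identity $\defp_{K'J}\defp_{KJ}=1$ (immediate from \eqref{thetagen} and the quasi-commutativity $u_K u_J = \defp_{KJ}\, u_J u_K$), evaluation on a generator yields the compact braided-tangent formula
\begin{equation*}
\ch_J(u_K) \;=\; \delta_{J'K} - u_J u_K,
\end{equation*}
the direct analogue for $S^{2n}_\theta$ of \eqref{gen-derS4theta}. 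The same sphere relation then also yields, as a useful by-product, the linear dependence $\sum_I u_{I'}\ch_I = 0$, the $S^{2n}_\theta$-version of \eqref{relazione-horbis}.

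Next, to establish generation, I would take an arbitrary $X\in\DerR{\O(S^{2n}_\theta)}$, set $x_K := X(u_K)$, and propose the decomposition $X = \sum_I x_{I'}\ch_I$. Evaluation of the right-hand side on a generator $u_K$ via the formula just derived gives
\begin{equation*}
\Big(\sum\nolimits_I x_{I'}\ch_I\Big)(u_K) \;=\; x_K - \Big(\sum\nolimits_I x_{I'} u_I\Big) u_K,
\end{equation*}
so the proposed decomposition reduces to the single identity $\sum_I x_{I'}u_I = 0$. I would derive this identity by applying $X$ to the sphere relation $0=X(\sum_I u_{I'}u_I)$ and expanding via the braided Leibniz rule \eqref{Der}, obtaining
\begin{equation*}
0 \;=\; \sum\nolimits_I x_{I'} u_I \;+\; \sum\nolimits_I (\r_\alpha\trl u_{I'})\,(\r^\alpha\trl X)(u_I).
\end{equation*}
Decomposing $X$ into $K$-homogeneous components and unpacking the adjoint action \eqref{action-der}, together with quasi-commutativity, the two summands can be shown to coincide, which forces $\sum_I x_{I'}u_I = 0$ and hence $(\sum_I x_{I'}\ch_I)(u_K) = X(u_K)$ on every generator. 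Since braided derivations of the finitely generated algebra $\O(S^{2n}_\theta)$ are determined by their action on algebra generators, $X = \sum_I x_{I'}\ch_I$ lies in the left $\O(S^{2n}_\theta)$-submodule generated by the $\ch_J$.

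The main obstacle in this plan is precisely the equality of the two summands in the last displayed identity, which requires careful tracking of the $\r$-matrix factors. The crucial simplification I would exploit is that the $K$-action on $\O(S^{2n}_\theta)$ effectively factors through $U(\mathfrak{t}^n)^{op}$ alone, since $\widetilde{H}_j\trl u_I = 0$ by \eqref{hhtact}; this reduces the braiding on $\O(S^{2n}_\theta)\otimes\O(S^{2n}_\theta)$ to multiplication by the scalar factors $\defp_{IJ}$, and a weight-by-weight computation then yields the desired equality using only the relation $u_{I'}=u_I^*$ (i.e. opposite weight) and the antisymmetry of $\theta_{jk}$. This is the direct $S^{2n}_\theta$ analogue of the $S^4_\theta$ argument made explicit in \S\ref{sec:ASib} and parallels the classical counterpart recalled in \cite[Prop.~4.1]{pgc-examples}.
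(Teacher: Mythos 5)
Your argument is correct, but it takes a genuinely different route from the paper. The paper's proof never considers an arbitrary derivation: it computes $\ch_J(u_K)=\delta_{J'K}-u_J\dott u_K$ (as you do) and then verifies on generators the identity $\wl^\pi_{IJ}=u_I\,\ch_J-\defp_{IJ}\,u_J\,\ch_I$, so that the restricted rotation derivations $\wl^\pi_{IJ}$ lie in the module spanned by the $\ch_J$; generation then rests on the (imported, not re-proved) fact that the $\wl^\pi_{IJ}$ generate $\DerR{\O(S^{2n}_\theta)}$, in analogy with the classical frame-bundle picture and \cite[Prop.~4.1]{pgc-examples}. You instead decompose an arbitrary $X\in\DerR{\O(S^{2n}_\theta)}$ directly as $X=\sum\nolimits_I X(u_{I'})\,\ch_I$, reducing everything to the single identity $\sum\nolimits_I X(u_{I'})\dott u_I=0$ extracted from the sphere relation via the braided Leibniz rule; your phase bookkeeping there is sound, since for $X$ homogeneous of $H$-weight $\chi$ the $\r$-matrix factor $e^{2\pi i\langle\chi,\epsilon_I\rangle}$ coming from the Leibniz rule is exactly cancelled by the factor $e^{2\pi i\langle\epsilon_I,\chi\rangle}$ from quasi-commuting $u_{I'}$ past $X(u_I)$, so the second summand re-indexes to the first and $2\sum\nolimits_I X(u_{I'})\dott u_I=0$. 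What your route buys is self-containedness: it proves that \emph{every} braided derivation is a combination of the $\ch_J$ without presupposing that the $\wl^\pi_{IJ}$ generate. What the paper's route buys is brevity and the explicit inversion formula $\wl^\pi_{IJ}=u_I\,\ch_J-\defp_{IJ}\,u_J\,\ch_I$, which is reused later (e.g.\ for $\rho(\wl^\pi_{IJ})$ and the vertical projection). One step worth making explicit in your write-up: after matching $X$ and $\sum\nolimits_I X(u_{I'})\,\ch_I$ on the generators $u_K$, you should note that both are braided derivations of the same $K$-weight, so the braided Leibniz rule propagates the agreement to all of $\O(S^{2n}_\theta)$.
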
  
\begin{proof}
We establish the lemma by showing that
$$
 \wl^\pi_{I J} = u_I \ch_J - \defp_{IJ} u_J \ch_I. 
$$
Since both sides are braided derivations it is enough to show the equality on the generators of $\O(S^{2n}_\theta)$.
From \eqref{wl-sotto}, and using the relation $ \sum\nolimits_{I}  u_{I'}\dott u_I=1$, one computes
\beq\label{wh-s2n}
 \ch_J(u_K) = \delta_{J K' } - \defp_{K' J} u_K \dott u_J = \delta_{J K'} - u_{J} \dott u_K  \, .
\eeq
Then, 
\begin{align*}
(u_I \ch_J  -\defp_{IJ} u_J \ch_I) (u_K) &= (u_I \delta_{J' K} - u_I \dott u_{J} \dott u_K) 
 - \defp_{IJ}   (u_J \delta_{I' K} - u_J \dott u_{I} \dott u_K) \\ &= u_I \delta_{J' K}  - \defp_{IJ} u_J \delta_{I' K}.
\end{align*}
A comparision with   \eqref{wl-sotto} shows that this  coincides with the evaluation $\wl_{I J }^\pi (u_{K })$.
\end{proof}
Notice that the sphere relation  implies the generators are constrained as   $\sum\nolimits_J u_{J'} \ch_J=0$. 

\begin{prop}\label{prop:Ders2n}
The braided Lie algebra structure of  ${\rm{Der}}^{\r}(\O(S^{2n}_\theta))$ is given by 
$$
 [\ch_I , \ch_J]  = u_I \ch_J - \defp_{IJ} u_J \ch_I  .
$$
\end{prop}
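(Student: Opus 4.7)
The plan is to mirror the strategy used for Proposition \ref{prop:Ders4theta}: since the braided commutator of two derivations is again a derivation, it suffices to verify the identity on the algebra generators $u_K$ of $\O(S^{2n}_\theta)$.

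First I would record, from \eqref{wh-s2n}, the basic evaluation $\ch_J(u_K) = \delta_{J K'} - u_J \dott u_K$. Next, applying $\ch_I$ as a braided derivation and using the commutation rule $u_I \dott u_J = \defp_{IJ} u_J \dott u_I$, I would compute
\begin{align*}
\ch_I\circ\ch_J(u_K) & = -\ch_I(u_J)\dott u_K - \defp_{IJ}\, u_J \dott \ch_I(u_K) \\
& = -\delta_{I J'}\, u_K + u_I \dott u_J \dott u_K - \defp_{IJ}\,\delta_{I K'}\, u_J + u_I \dott u_J \dott u_K
\end{align*}
and, swapping $I\leftrightarrow J$, the analogous expression for $\ch_J\circ\ch_I(u_K)$. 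Since both $u_I$ and $\ch_I$ carry the same $K$-weight (as $\ch_J=\sum_I u_{I'}\wl_{IJ}^\pi$ has the weight of $u_J$), the braided commutator is $[\ch_I,\ch_J] = \ch_I\circ\ch_J - \defp_{IJ}\,\ch_J\circ\ch_I$.

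Subtracting the two expressions with the correct braiding factor, the cubic terms $u_I \dott u_J \dott u_K$ cancel using $\defp_{IJ}\, u_J \dott u_I = u_I \dott u_J$, and the two Kronecker terms $-\delta_{IJ'} u_K + \defp_{IJ}\delta_{J I'}u_K$ cancel as well (note $\defp_{II'}=1$ since $\theta_{II'}=-\theta_{II}=0$). What remains is
\[
[\ch_I,\ch_J](u_K) = u_I\,\delta_{J K'} - \defp_{IJ}\, u_J\,\delta_{I K'}.
\]
Finally I would evaluate the right-hand side directly,
\[
(u_I\ch_J - \defp_{IJ} u_J\ch_I)(u_K) = u_I\,\delta_{J K'} - u_I\dott u_J\dott u_K - \defp_{IJ}\,u_J\,\delta_{I K'} + \defp_{IJ}\,u_J\dott u_I\dott u_K,
\]
and observe that the cubic terms cancel by quasi-commutativity, so the two expressions agree.

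The only delicate point is the bookkeeping of the braiding constants $\defp_{IJ}$ and $\defp_{JI}=\defp_{IJ}^{-1}$, together with the identity $\defp_{II'}=1$ which is what forces the unwanted Kronecker terms to cancel; the rest of the argument is a straightforward expansion relying on the sphere relation being implicit in the identity $\defp_{IJ}u_J\dott u_I=u_I\dott u_J$.
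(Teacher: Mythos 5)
Your proof is correct and follows essentially the same route as the paper: evaluate $\ch_I\circ\ch_J$ on the generators $u_K$ via the braided Leibniz rule, form the braided commutator with the factor $\defp_{IJ}$, and cancel the cubic and Kronecker terms using quasi-commutativity and $\defp_{II'}=1$. (The identity $\defp_{IJ}\,u_J\dott u_I=u_I\dott u_J$ you invoke is the quasi-commutativity relation rather than the sphere relation, but this does not affect the argument.)
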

 \begin{proof}
From \eqref{wh-s2n} we compute
\begin{align*}
\ch_I \ch_J(u_K) & =    - \ch_I ( u_{J} \dott u_K) 
=  - (\delta_{I' J} - u_I \dott u_J ) \dott u_K - \defp_{IJ} u_{J} \dott (\delta_{I' K} - u_I \dott u_K)
\end{align*}
Then,
\begin{align*}
[\ch_I & ,\ch_J] (u_K)  = (\ch_I \ch_J - \defp_{IJ} \ch_J \ch_I)  (u_K) 
\\ 
& = 
- \delta_{I' J} u_K + 2 u_I \dott u_J \dott u_K - \defp_{IJ} \delta_{I' K}   u_{J} 
+ \defp_{IJ} \delta_{J' I} u_K - 2  \defp_{IJ} u_J \dott u_I \dott u_K +   \delta_{J' K}   u_{I} 
\\ 
&  = 
  - \defp_{IJ} \,  \delta_{I' K}   u_{J} 
  +   \delta_{J' K} u_{I} \\ & = (u_I \ch_J  -\defp_{IJ} u_J \ch_I) (u_K) \; .
\qedhere\end{align*}
\end{proof}
 
 In the limit $\theta_{IJ} =0$, the derivations $\wl^\pi_{IJ}$ give a representation of the Lie algebra $so(2n+1)$ as vector fields on the sphere $S^{2n}$.
 
\subsection{The sequence and the equivariant connection}

From the definition of the generators $\ch_J =\sum\nolimits_I u_{I'} \, \wl_{IJ}^\pi$ of ${\rm{Der}}^{\r}(\O(S^{2n}_\theta))$ in 
\eqref{der-s2n}, the right $\O(S^{2n}_\theta)$-module morphism 
\begin{align} \label{hor-lift-SO}
\rho :  {\rm{Der}}^{\r}(\O(S^{2n}_\theta)) \to \DerH{\O(SO_\theta(2n+1))} , \quad 
\ch_J   \mapsto \rho(\ch_J):= \sum\nolimits_I u_{I'} \wl_{I J}
\end{align}
 is a section
of the projection $\pi$, that is $\pi\circ \rho=\id_{\O(S^{2n}_\theta)}$. 
Explicitly, 
\begin{align}\label{rhowj}
  \rho(\ch_J) (n_{S T})
&=   \sum\nolimits_K u_{K'} \dott(n_{K T} \delta_{ J' S} - \defp_{K J} n_{J T} \delta_{ K' S})  
=   \delta_{0 T} \delta_{ J S'} -  \defp_{S' J} u_{S} \dott  n_{J T} 
\nn
\\
& 
=  \delta_{0 T} \delta_{ J S'} -   n_{J T} \dott  u_{S} \; ,
\end{align}
where we  used  \eqref{der-so2n+1} and    the orthogonality condition \eqref{orth-cond-comp} for the third equality.

\begin{prop} 
The connection $\rho$ is invariant under the action of  the   braided Lie algebra $so_\theta(2n+1)$: 
for every $  \wl_{I J} \in \DerH{\O(SO_\theta(2n+1))}$ we have  $ (\delta_{\wl_{IJ}} \rho)(X) =0$, that is 
$$
 [ \wl_{IJ} \,  ,\rho(X)]   - \rho([\wl_{IJ}^\pi, X] ) = 0 
$$
 for all 
$X  \in  {\rm{Der}}^{\r}(\O(S^{2n}_\theta))$.
\end{prop}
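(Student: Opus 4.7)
The strategy follows closely the proof of Proposition \ref{prop:equiv-conn} for the $SU(2)$-bundle over $S^4_\theta$. Since both $\rho$ and the braided commutator are right $\O(S^{2n}_\theta)$-linear and since the action $\delta_Y$ is a braided derivation (Proposition \ref{prop:actPcon}), it suffices to establish the identity on the generators $\ch_K$ of ${\rm{Der}}^{\r}(\O(S^{2n}_\theta))$, that is, to show
$$
[\wl_{IJ},\rho(\ch_K)] \;=\; \rho([\wl_{IJ}^\pi,\ch_K])
$$
for all indices $I,J,K$.

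For the right-hand side I would first express $\wl_{IJ}^\pi = u_I \ch_J - \defp_{IJ} u_J \ch_I$ (which follows from $\ch_J=\sum_L u_{L'} \wl_{LJ}^\pi$ together with \eqref{wl-sotto} and the sphere relation), and then expand $[\wl_{IJ}^\pi,\ch_K]$ by using the bimodule identity \eqref{moduloLie} together with the braided bracket of Proposition \ref{prop:Ders2n}. This yields an expression linear in the $\ch_M$ with coefficients given by $\delta$'s, the generators $u_I,u_J$ and braiding phases $\defp$; applying $\rho$ then converts each $\ch_M$ into the horizontal lift $\sum_L u_{L'}\wl_{LM}$.

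For the left-hand side I would substitute $\rho(\ch_K)=\sum_L u_{L'}\wl_{LK}$ and expand the commutator using \eqref{moduloLie} together with the braided commutators \eqref{blason} of $so_\theta(2n+1)$. The resulting expression splits into (a) a ``derivation'' piece $\sum_L \wl_{IJ}(u_{L'})\,\wl_{LK}$, which by \eqref{wl-sotto} produces precisely the terms $u_I\rho(\ch_J)$ and $-\defp_{IJ}u_J\rho(\ch_I)$ with appropriate $\delta$'s, and (b) a ``structure-constant'' piece $\sum_L u_{L'}[\wl_{IJ},\wl_{LK}]$ which, by \eqref{blason} and the sphere relation $\sum_L u_{L'} \dott u_L = 1$, collapses onto a combination of $\wl_{IK},\wl_{JK}$ and their horizontal lifts.

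A term-by-term comparison then yields the identity. I expect that, as in the $S^4_\theta$ case, the matching will also use the vanishing constraint $\sum_J u_{J'}\ch_J=0$ noted after \eqref{wh-s2n} to eliminate a residual ``trace'' contribution proportional to $\delta_{I'J}$ or $\delta_{J'K}$. The main obstacle is the careful bookkeeping of the braiding phases $\defp_{AB}=e^{-2i\pi\theta_{AB}}$ arising from the action of the Cartan generators $H_j$ on the $\wl_{LK}$ and from the quasi-commutativity of $\O(S^{2n}_\theta)$ within $\O(SO_\theta(2n+1,\IR))$; the braided antisymmetry $\wl_{IJ}=-\defp_{IJ}\wl_{JI}$ must be invoked consistently to ensure that phases on the two sides cancel. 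In the classical limit $\theta_{IJ}=0$ all $\defp_{AB}=1$ and the identity reduces to the standard $SO(2n+1)$-equivariance of the horizontal lift associated with the round connection on the orthonormal frame bundle $SO(2n+1)\to S^{2n}$.
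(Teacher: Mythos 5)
Your proposal follows exactly the route the paper intends: the paper omits this proof, stating only that it is analogous to Proposition \ref{prop:equiv-conn}, and your outline is precisely that analogous computation (reduce to generators by right $\O(S^{2n}_\theta)$-linearity, expand $[\wl_{IJ}^\pi,\ch_K]$ via $\wl_{IJ}^\pi=u_I\ch_J-\defp_{IJ}u_J\ch_I$ and \eqref{moduloLie}, expand $[\wl_{IJ},\rho(\ch_K)]$ via the derivation rule and \eqref{blason}, and compare using braided antisymmetry). The outline is correct and consistent with the $S^4_\theta$ model proof.
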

\noindent
The proof is analogous to that of Proposition \ref{prop:equiv-conn} and   we omit it.
Due to this proposition the connection $\rho$ is left $\O(S^{2n}_\theta)$-linear as well.
Using \eqref{der-so2n+1}, the map $\rho$ satisfies 
 $\rho(\sum\nolimits_J u_{J'} \ch_J) = \sum\nolimits_{I,J} u_{J'} \dott u_{I'} \wl_{I J}=0$ as it should be due 
 to $\sum\nolimits_J u_{J'} \ch_J=0$.

The  kernel of the projection $\pi$ is generated, as an $\O(S^{2n}_\theta)$-module, by the
derivations
\beq\label{vert-son}
\wy_{IJ}: =\wl_{IJ}-\rho (\wl^\pi_{IJ}) \, ,
\eeq
where
$$
\rho(\wl^\pi_{I J})
=u_{I} \rho(\ch_J)- \defp_{I J} u_{J} \rho(\ch_I)
= \sum\nolimits_K  u_{I} \dott u_{K'} \wl_{K J} - \defp_{I J} u_{J} \dott u_{K'} \wl_{K I} \, .
$$  
We set  $\mathrm{aut}^{\r}_{\O(S^4_\theta) }(\O(SO_\theta(2n+1, \IR)) ) = \ker{\pi}$,  the braided Lie subalgebra of vertical derivations. 
 From  \S \ref{se:scgt} this is the braided Lie algebra of infinitesimal gauge transformations of the $\O(SO_\theta(2n,\mathbb{R}))$ Hopf--Galois extension $\O(S^{2n}_\theta) \subset
\O(SO_\theta(2n+1,\mathbb{R}))$  of the noncommutative $\theta$-spheres $S^{2n}_\theta$.
For $n=2$, the   braided Lie algebra $\mathrm{aut}^{\r}_{\O(S^{4}_\theta) }(\O(SO_\theta(5, \IR)))$ was  studied in \cite{pgc-examples} in the context of twist deformation quantization.

The braided Lie algebras above give rise to a short exact sequence, 
\beq\label{as-SOn}
0 \to \mathrm{aut}^{\r}_{\O(S^{2n}_\theta) }(\O(SO_\theta(2n+1, \IR)) ) \stackrel{\imath}{\to} {\rm{Der}}^{\r}_{\M^H}(\O(SO_\theta(2n+1, \IR))) 
\stackrel{\pi}{\to} {\rm{Der}}^{\r}(\O(S^{2n}_\theta)) \to 0 \, .
\eeq
The  sequence is split by the map $\rho$ in 
\eqref{hor-lift-SO}, the horizontal lift.
The corresponding vertical projection is then \eqref{vert-son}:
\begin{align*}
\vp &: {\rm{Der}}^{\r}_{\M^H}(\O(SO_\theta(2n+1, \IR)))  \to \mathrm{aut}^{\r}_{\O(S^{2n}_\theta) }(\O(SO_\theta(2n+1, \IR))) \, , \\
& \quad \wl_{I J} \mapsto \vp( \wl_{I J} ) := 
\wl_{I J} - \rho(\wl_{I J}^\pi)
= \wy_{I J}   
\; .
\end{align*} 
As for the curvature one has the following.
\begin{prop}
On the generators   $\ch_\nu \in  {\rm{Der}}^{\r}(\O(S^{2n}_\theta)) $, the curvature is given  by
$$
\Omega(\ch_I, \ch_J)=  - \vp (\wl_{I J})
 = - \wy_{I J}    \, .
$$
\end{prop}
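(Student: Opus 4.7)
The plan is to follow the same strategy that produced \eqref{rrl} in the $S^4_\theta$ case, exploiting the $K$-equivariance of the splitting $\rho$ defined in \eqref{hor-lift-SO} (whose proof was outlined just before the statement). Since $\rho$ is equivariant in the sense $k\trl\rho=\varepsilon(k)\rho$, formula \eqref{etaeta} collapses: using that $(\r^\gamma\trl\rho)(X')=\varepsilon(\r^\gamma)\rho(X')$ together with $\sum\varepsilon(\r^\gamma)\r_\gamma=1$, one immediately finds
$$
\lq \rho,\rho \rq (X,X') = [\rho(X),\rho(X')]\, .
$$
Combined with the identity $[\ch_I,\ch_J] = u_I\ch_J - \defp_{IJ}u_J\ch_I = \wl^\pi_{IJ}$ of Proposition \ref{prop:Ders2n}, the definition \eqref{hcur} of the curvature then reduces to
$$
\Omega(\ch_I,\ch_J) = \rho(\wl^\pi_{IJ}) - [\rho(\ch_I),\rho(\ch_J)]\, .
$$

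The main task is therefore to establish the analogue of \eqref{rrl}, namely
\beq\label{keyclaim}
[\rho(\ch_I),\rho(\ch_J)] = \wl_{IJ}\, .
\eeq
To do so, I would expand
$
[\rho(\ch_I),\rho(\ch_J)] = \big[\sum\nolimits_S u_{S'}\wl_{SI},\sum\nolimits_T u_{T'}\wl_{TJ}\big]
$
using the braided Leibniz rule \eqref{moduloLie}. This produces three types of contributions: a term in which the derivations $\wl_{SI}$ act on the generators $u_{T'}$ (computable via \eqref{der-so2n+1} restricted to column $0$, i.e. via \eqref{wl-sotto}); a symmetric term with the roles of $(S,I)$ and $(T,J)$ exchanged; and a term with the $so_\theta(2n+1)$ bracket \eqref{blason} of $\wl_{SI}$ with $\wl_{TJ}$, multiplied on the left by $u_{T'}\dott u_{S'}$ after the appropriate braiding. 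The four Kronecker deltas arising from \eqref{blason} and the two from the action on $u_{S'},u_{T'}$ will pair up, producing in particular the term $\wl_{IJ}$ itself and several terms of the form $u_I u_{S'}\wl_{SJ}$ or $u_J u_{S'}\wl_{SI}$; the latter will collapse using the sphere relation $\sum_S u_{S'}\dott u_S=1$ and the constraint $\sum_J u_{J'}\ch_J=0$, exactly as in the $S^4_\theta$ computation, leaving behind only $\wl_{IJ}$.

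Once \eqref{keyclaim} is in place, combining with the reduction of $\lq\rho,\rho\rq$ and the definition \eqref{vert-son} of $\wy_{IJ}=\wl_{IJ}-\rho(\wl^\pi_{IJ})$, one concludes
$$
\Omega(\ch_I,\ch_J) = \rho(\wl^\pi_{IJ}) - \wl_{IJ} = -\wy_{IJ} = -\vp(\wl_{IJ})\, .
$$
The main obstacle is purely bookkeeping: carefully tracking the braiding factors $\defp_{IJ}=\exp(-2\pi i\theta_{IJ})$ and their products $\defp_{IS}\defp_{IT}\defp_{JS}\defp_{JT}$ that arise from \eqref{moduloLie} and \eqref{blason}, and verifying that they conspire with the $\delta$-symbols so that every unwanted term is killed by the sphere relation. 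There is no conceptual difficulty beyond what already appeared in the proof of \eqref{rrl}; the combinatorics is just slightly heavier because the ambient braided Lie algebra is $so_\theta(2n+1)$ rather than $so_\theta(5)$.
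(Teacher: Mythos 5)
Your proposal is correct, and its overall architecture coincides with the paper's: both arguments reduce the statement to the single identity $[\rho(\ch_I),\rho(\ch_J)]=\wl_{IJ}$, after which $\Omega(\ch_I,\ch_J)=\rho(\wl^\pi_{IJ})-\wl_{IJ}=-\wy_{IJ}$ follows from \eqref{vert-son} and Proposition \ref{prop:Ders2n}. (Your preliminary observation that equivariance of $\rho$ collapses $\lq\rho,\rho\rq(X,X')$ to $[\rho(X),\rho(X')]$ is right and is used tacitly in the paper.) Where you diverge is in how the key identity is established. You propose to expand $\rho(\ch_I)=\sum_S u_{S'}\wl_{SI}$ and push everything through the braided Leibniz rule \eqref{moduloLie}, the evaluation \eqref{wl-sotto} and the abstract $so_\theta(2n+1)$ brackets \eqref{blason}, exactly as the paper does for $S^4_\theta$ in the proof of \eqref{rrl}; this works, and the terms you expect to survive or cancel (via the sphere relation and $\sum_T u_{T'}\dott u_{S'}\wl_{ST}=\rho(\sum_T u_{T'}\ch_T)=0$) are the right ones, but it carries the heavy bookkeeping you acknowledge. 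The paper instead exploits the closed-form evaluation \eqref{rhowj}, $\rho(\ch_J)(n_{ST})=\delta_{0T}\delta_{JS'}-n_{JT}\dott u_S$, and computes the braided commutator directly on the generators $n_{ST}$ of $\O(SO_\theta(2n+1,\IR))$: the composition $\rho(\ch_I)\big(\rho(\ch_J)(n_{ST})\big)$ is then a two-line Leibniz computation and the comparison with $\wl_{IJ}(n_{ST})$ from \eqref{der-so2n+1} is immediate. The paper's route is considerably shorter precisely because of \eqref{rhowj}; yours has the mild virtue of not needing that formula and of running in parallel with the $S^4_\theta$ case, but if you carry it out you should present the cancellations explicitly rather than asserting them.
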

\begin{proof}
We need to show that $[\rho(\ch_I ), \rho(\ch_J)]  = \wl_{I J}$. 
For this we use the expression for
$\rho(\ch_J)(n_{S T})$ in \eqref{rhowj} and that 
$\rho(\ch_J) (u_{S}) = \ch_J(u_{S}) = \delta_{ J' S} -  u_{J} \dott  u_{S}$.
Then
\begin{align*}
 \rho(\ch_I ) \big(\rho(\ch_J) (n_{S T})\big) 
& =  -  \rho(\ch_I )  (n_{J T} ) \dott  u_{S}   - \defp_{I J }   n_{J T} \dott \rho(\ch_I )  (  u_{S} ) 
\\
 & =  -  (\delta_{0 T} \delta_{ I' J} -   n_{I T} \dott  u_{J}) \dott  u_{S}   - \defp_{I J }   n_{J T} \dott (\delta_{ I' S} -  u_{I} \dott  u_{S}  ) \; .
\end{align*}
Next for the braided commutator:
\begin{align*}
\Big( \rho(\ch_I )& \rho(\ch_J)  - \defp_{I J} \rho(\ch_J ) \rho(\ch_I)\Big) (n_{S T}) 
= 
\\
&= -  \delta_{0 T} \delta_{ I' J}  u_{S}  
+    n_{I T} \dott  u_{J} \dott  u_{S}  
 - \defp_{I J }  \delta_{ I' S}    n_{J T} 
+ \defp_{I J }   n_{J T} \dott   u_{I} \dott  u_{S}  
\\
& \quad
+   \delta_{0 T} \delta_{ J' I}  \defp_{I J} u_{S}  
-    \defp_{I J} n_{J T} \dott  u_{I} \dott  u_{S}  
 +   \delta_{ J' S}    n_{I T} 
-     n_{I T} \dott   u_{J} \dott  u_{S}  
\\
&=   
 - \defp_{I J }  \delta_{ I' S}    n_{J T} 
 +   \delta_{ J' S}    n_{I T} 
= \wl_{I J }(n_{S T})
\end{align*}
where the last equality follows from \eqref{der-so2n+1}.
\end{proof}

The $so_\theta(2n)$-valued connection $1$-form on the  bundle $\O(S^{2n}_\theta) \subseteq \O(SO_\theta(2n+1, \IR))$, corresponding to the 
splitting of the Atiyah sequence  \eqref{as-SOn}, is the projection $\omega_{|_{so_\theta(2n)}} $ to $so_\theta(2n)$ of the Maurer-Cartan form 
$$
\omega = - d N^\dag \dott  N 
$$
  on $\O(SO_\theta(2n+1, \IR))$.   Here $(N^\dag)_{JK}= n_{K'J'}$.

The differential calculus on the algebras $\O(SO_\theta(2n+1, \IR))$ was constructed in \cite{AC,cdv}. The commutation relations among degree-zero and degree-one generators of the differential algebra $\Omega(SO_\theta(2n+1, \IR))$ are given by
$$
n_{IJ} \dott d n_{KL}= \defp_{IK}\defp_{LJ}  dn_{KL} \dott n_{IJ} 
\quad  , \quad
dn_{IJ} \dott dn_{KL}=- \defp_{IK}\defp_{LJ}  dn_{KL} \dott dn_{IJ}  \, .
$$
Moreover, 
from $N^\dagger N = \II_{n+1}$ 
 one has   $\sum\nolimits_L ( dn_{L' K'} \dott  n_{LJ }  + \defp_{J K'}    d n_{L J} \dott  n_{L' K' }) = 0$.
Then,
$$
\omega_{K'J'}=  - \sum\nolimits_L d n_{L' K } \dott  n_{LJ'}
 =  \defp_{J' K} \sum\nolimits_L d  n_{LJ'}  \dott n_{L' K } 
= - \defp_{K J}\, \omega_{JK} \;.
$$
 With $E_{J K}$ the elementary matrices (with component 1 in position
$JK$ and zero otherwise)  
  one computes
$$
\omega= \sum\nolimits_{J, L} \omega_{J L} E_{J L} 
= \tfrac{1}{2}\sum\nolimits_{J, L} \omega_{J L} (E_{J L} - \defp_{ L J}E_{L'J'})
=  \tfrac{1}{2} \sum\nolimits_{J, L} \omega_{J L}  \wk_{J L'}  
$$
where in the last equality we have defined the $d(d-1)/2$ matrices
$\wk_{J L}$
with $d=2n+1$. More in general, for any even or odd $d$ we have the following.
\begin{lem}
The matrices  $\wk_{JL}=E_{J L'} - \defp_{J L}E_{LJ'}$, for
$J, L=1,\ldots d$, satisfy the equality  $\wk_{JL}=-\defp_{J
  L}\wk_{LJ}$.
The entries of each matrix $K_{JL}$ satisfy  
$((K_{JL})^tQ)_{AB}=-\lambda_{AB}(QK_{JL})_{AB}$.
 The matrices  $\wk_{JL}$ close the braided Lie algebra
$so_\theta(d)$:
\beq\label{son-rep}
 [\wk_{I J} , \wk_{S T} ]  
= \delta_{J S'}\wk_{I T} 
- \defp_{I J} \delta_{I S'}\wk_{J T}
-\defp_{S T} (\delta_{J T'}\wk_{I S}
-\defp_{I J} \delta_{I T'}\wk_{J S}) \, .
\eeq
\end{lem}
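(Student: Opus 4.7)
The proof is a direct verification of the three claims on the basis matrices $E_{AB}$, using $Q_{AB}=\delta_{A'B}$ together with the following consequences of \eqref{thetagen}: the antisymmetry $\defp_{AB}\defp_{BA}=1$, and $\theta_{A'B'}=\theta_{AB}$ (whence $\defp_{A'B'}=\defp_{AB}$).

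Part (1) follows immediately by substituting the definition of $\wk_{JL}$ and applying $\defp_{JL}\defp_{LJ}=1$. For Part (2), an entry-wise computation gives $\bigl(E_{JL'}^t Q\bigr)_{AB} = \delta_{L'A}\delta_{J'B}$ and $\bigl(Q E_{JL'}\bigr)_{AB} = \delta_{AJ'}\delta_{L'B}$, together with analogous identities for $E_{LJ'}$. Assembling these into $\wk_{JL}^t Q$ and $Q\wk_{JL}$ leaves only two potentially nonzero entries (at positions $(L',J')$ and $(J',L')$); on each of them the claimed identity reduces to $\lambda_{A'B'}=\lambda_{AB}$, which holds as noted.

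Part (3) is the main step. On the $K$-eigenvectors $\wk_{IJ}$ and $\wk_{ST}$, both $\widetilde H$-invariant and carrying $H_j$-weights $\delta_{jI}+\delta_{jJ}-\delta_{j'I}-\delta_{j'J}$ and $\delta_{jS}+\delta_{jT}-\delta_{j'S}-\delta_{j'T}$ respectively, the braided commutator specializes to
$[\wk_{IJ},\wk_{ST}] = \wk_{IJ}\wk_{ST} - \defp_{IS}\defp_{IT}\defp_{JS}\defp_{JT}\,\wk_{ST}\wk_{IJ}$, exactly as in Proposition~\ref{prop:crdown-so}. Expanding both products via $E_{AB}E_{CD}=\delta_{BC}E_{AD}$ yields four terms on each side, each carrying precisely one of the Kronecker deltas $\delta_{J'S}$, $\delta_{I'S}$, $\delta_{J'T}$, or $\delta_{I'T}$. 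Each such delta collapses the fourfold braiding factor $\defp_{IS}\defp_{IT}\defp_{JS}\defp_{JT}$ to one of $1$, $\defp_{IJ}$, $\defp_{ST}$, or $\defp_{IJ}\defp_{ST}$, precisely matching the coefficients on the right-hand side of \eqref{son-rep}. Collecting the surviving $E$-monomials via the definitions $\wk_{IT}=E_{IT'}-\defp_{IT}E_{TI'}$ and its index variants then reconstructs the four terms of \eqref{son-rep}.

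The sole obstacle is the systematic bookkeeping of the eight $\defp$ factors under the four Kronecker deltas; no conceptual difficulty arises, and the parity of $d$ plays no role, so the argument covers the even and odd cases uniformly.
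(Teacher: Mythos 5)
Your proposal is correct and follows essentially the same route as the paper: expand $\wk_{IJ}\wk_{ST}$ and $\wk_{ST}\wk_{IJ}$ via $E_{AB}E_{CD}=\delta_{BC}E_{AD}$, form the braided commutator with the factor $\defp_{IS}\defp_{IT}\defp_{JS}\defp_{JT}$, and use \eqref{thetagen} under each Kronecker delta to reassemble the surviving monomials into the four $\wk$'s of \eqref{son-rep}, with parts (1) and (2) handled by the same direct entry-wise checks the paper leaves implicit. One small imprecision: under, say, $\delta_{J'S}$ the fourfold factor by itself reduces to $\defp_{JI}\defp_{IT}\defp_{JT}$ rather than to $1$; it is only after multiplying by the $\defp_{IJ}\defp_{ST}$ already attached to the relevant monomial of the second product that the coefficient of $E_{TI'}$ collapses to $\defp_{IT}$ and pairs with $E_{IT'}$ to give $\wk_{IT}$ --- but this is exactly the bookkeeping you defer, and it closes as claimed.
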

\begin{proof}
The first equality is clear, the second follows from a direct
computation. For the last statement we compute
\begin{align*}
\wk_{I J} \wk_{S T} &=  
\delta_{J' S } E_{I   T'} 
- \delta_{J' T} \defp_{S T} E_{I  S'}
- \delta_{I' S } \defp_{I J} E_{J T'} 
+ \delta_{ I' T} \defp_{S T} \defp_{I J} E_{J S'}
\end{align*}
and renaming the indices, 
\begin{align*}
\wk_{S T} \wk_{I J} &=  
\delta_{T' I } E_{S   J'} 
- \delta_{T' J} \defp_{I J} E_{S  I'}
- \delta_{S' I } \defp_{S T} E_{T J'} 
+ \delta_{ S' J} \defp_{I J} \defp_{S T} E_{T I'} \; .
\end{align*}
Then
\begin{align*}
&\!\!\!\! [\wk_{I J} , \wk_{S T} ]  
=
\wk_{I J} \wk_{S T} - \defp_{I S} \defp_{I T}\defp_{J S} \defp_{J T} \wk_{S T} \wk_{I J}  
\\
&= 
\delta_{J' S } (E_{I   T'} - \defp_{I S} \defp_{I T}\defp_{J S} \defp_{J T}   \defp_{I J} \defp_{S T} E_{T I'} )
- \delta_{I' S } (\defp_{I J} E_{J T'} - \defp_{I S} \defp_{I T}\defp_{J S} \defp_{J T}   \defp_{S T} E_{T J'} )
\\
& \quad 
- \delta_{J' T} (\defp_{S T} E_{I  S'} - \defp_{I S} \defp_{I T}\defp_{J S} \defp_{J T}  \defp_{I J} E_{S  I'})
+ \delta_{ I' T} (\defp_{S T} \defp_{I J} E_{J S'} - \defp_{I S} \defp_{I T}\defp_{J S} \defp_{J T}   E_{S   J'} )
\\
&= 
\delta_{J' S } (E_{I   T'} -   \defp_{I T}     E_{T I'} )
- \delta_{I' S } \defp_{I J} (E_{J T'} -      \defp_{J T}   E_{T J'} )
\\
& \quad 
- \delta_{J' T} \defp_{S T} (E_{I  S'} -  \defp_{I S}  E_{S  I'})
+ \delta_{ I' T} \defp_{S T} \defp_{I J} (E_{J S'} -   \defp_{J S}    E_{S   J'} )
\end{align*}
proving \eqref{son-rep}.
\end{proof}

The projection $\omega_{|_{so_\theta(2n)}} $ of $\omega$ to    the braided Lie subalgebra $so_\theta(2n)$ 
of $so_\theta(2n+1)$ is just
\beq\label{con-SOn}
\omega_{|_{so(2n)}} =   \sum\nolimits_{ J, K \neq 0} \omega_{J K}  \wk_{J K'}  \,.
\eeq
As a consistency check we show that $\omega_{|_{so_\theta(2n)}}$ is zero on horizontal fields:
$$
\langle \rho(\ch_I) , \omega_{|_{so(2n)}}   \rangle = - (\rho(\ch_I)(N^\dag)) \dott N _{|_{so(2n)}}=0
$$
for each horizontal field  $\rho(\ch_I)$ 
 in \eqref{hor-lift-SO}. Firstly we compute
 \begin{align*}
\langle \rho(\ch_I) , \omega_{J K}   \rangle  
& = - \sum\nolimits_L \rho(\ch_I) (  n_{L' J'})  \dott n_{L  K } 
=- \sum\nolimits_L (\delta_{0 J} \delta_{ I L} -   n_{I J'} \dott  u_{L'} )  \dott n_{L  K } 
\\
&= -   \delta_{0 J}     n_{I  K } 
+   \delta_{0 K}    n_{I J'}  
\end{align*}
where we used the expression for $\rho(\ch_I) (n_{S T})$ computed in \eqref{rhowj} and the orthogonality condition \eqref{orth-cond-comp}. This then gives 
$\langle \rho(\ch_I) , \omega_{|_{so(2n)}}   \rangle =0$ when 
 considering the projection $\omega_{|_{so_\theta(2n)}}$ of $\omega$ to $so_\theta(2n)$  in \eqref{con-SOn}.

\subsection{The derivations as sections of an associated bundle}

Given the $\O(SO_\theta(2n,\mathbb{R}))$-Hopf--Galois extension $\O(S^{2n}_\theta) \subset \O(SO_\theta(2n+1,\mathbb{R}))$  of orthonormal frames on $S^{2n}_\theta$, 
we identify the derivations ${\rm{Der}}^{\r}(\O(S^{2n}_\theta))$ as the
sections of the associated bundle for the fundamental corepresentation of the Hopf algebra 
  $\O(SO_\theta(2n,\mathbb{R}))$ on the algebra $\O(\IR^{2n}_\theta)$.

Given a right $H$-comodule algebra $A$ with coaction
$
\delta: A \to A \ot H$, $\delta(a) = \zero{a} \ot \one{a}
$ 
and a left    $H$-comodule $V$  with coaction
$
\gamma: V \to H \ot V$, $\gamma(v) =  \mone{v} \ot \zero{v}$,
sections of the vector bundle associated with the corepresentation  $\gamma$ can be identified with linear maps
 $\phi: V \to A$
 which are $H$-equivariant
\beq\label{seeq}
\zero{\phi(v)} \ot \one{\phi(v)} = \phi(\zero{v}) \ot S(\mone{v}) \, .
\eeq
The collection $\mathcal{E}$ of such maps is a left $B$-module for $B\subseteq A$ the subalgebra of coinvariants for the $H$-coaction.

For $H=\O(SO_\theta(2n,\mathbb{R}))$ consider the fundamental corepresentation 
$$ \gamma: \IR^{2n} \to \O(SO_\theta(2n,\mathbb{R})) \ot \IR^{2n} 
\; , \quad 
e_\alpha \mapsto \sum\nolimits_{\beta}m_{\alpha\beta} \ot e_\beta 
$$
on the vector space $\IR^{2n}$ with the $2n$ basis elements $e_\alpha$,  $\alpha \neq \alpha' $. 
 We denote by $\mathcal{E}_T$ the $\O(S^{2n}_\theta)$-module of equivariant maps, defined as in \eqref{seeq},  
 associated via this corepresentation to the $\O(SO_\theta(2n,\mathbb{R}))$-Hopf--Galois extension 
$\O(S^{2n}_\theta) \subset \O(SO_\theta(2n+1,\mathbb{R}))$.

\begin{prop}
The $\O(S^{2n}_\theta) $-module $\mathcal{E}_T$ of equivariant maps  is generated by the $2n+1$ linear maps 
\begin{align}\label{phiJ}
\phi_{(J)} &:  \IR^{2n} \to  \O(SO_\theta(2n+1,\mathbb{R})) \, , \nn \\
\phi_{(J)}(e_\alpha) &:= (NQ)_{J\alpha} = n_{J\alpha'} \, \quad J = 1,  ..., 2n+1  \,.
\end{align}
 \end{prop}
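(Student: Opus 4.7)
The plan is to prove the proposition in three steps: verify equivariance of the candidate generators, exhibit an explicit reconstruction formula, and confirm its coinvariance.

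\medskip
\textbf{Step 1: Equivariance of $\phi_{(J)}$.} I would first check that each map $\phi_{(J)}$ satisfies the equivariance condition $\zero{\phi(v)} \ot \one{\phi(v)} = \phi(\zero{v}) \ot S(\mone{v})$. On the one hand, the restricted coaction gives $\delta(n_{J\alpha'}) = \sum_L n_{JL} \ot m_{L\alpha'}$; on the other, $\phi_{(J)}(\zero{e_\alpha}) \ot S(\mone{e_\alpha}) = \sum_\beta n_{J\beta'} \ot m_{\beta'\alpha'}$, summed over $\beta\ne 0$. After the substitution $L=\beta'$, these agree once one observes that the missing $L=0$ term $n_{J0} \ot m_{0\alpha'}$ vanishes, since $m_{0\alpha'} = \eta(n_{0\alpha'}) = 0$ for $\alpha'\neq 0$.

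\medskip
\textbf{Step 2: Reconstruction formula.} For an arbitrary $\phi \in \mathcal{E}_T$, I propose the candidate
$$
b_J := \sum\nolimits_{\alpha \ne 0} \phi(e_\alpha)\dott n_{J'\alpha} \in \O(SO_\theta(2n+1,\mathbb{R})),\qquad J=1,\ldots,2n+1,
$$
and would show $\phi = \sum_J b_J\,\phi_{(J)}$. Evaluating on the basis vector $e_\gamma$ yields
$$
\Big(\sum\nolimits_J b_J\,\phi_{(J)}\Big)(e_\gamma) = \sum\nolimits_{J,\,\alpha\ne 0} \phi(e_\alpha)\dott n_{J'\alpha}\dott n_{J\gamma'}.
$$
The first orthogonality relation in \eqref{orth-cond-comp}, with $K=J$ and $I' = \alpha$, gives $\sum_J n_{J'\alpha}\dott n_{J\gamma'} = \delta_{\alpha\gamma}$, and the sum collapses to $\phi(e_\gamma)$.

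\medskip
\textbf{Step 3: Coinvariance of $b_J$.} It remains to show $b_J \in \O(S^{2n}_\theta)$. Applying $\delta$ and using both the equivariance of $\phi$ established in Step 1 and the fact that $\delta$ is an algebra map, I obtain
$$
\delta(b_J) = \sum\nolimits_{\alpha\ne 0,\,\beta,\,M}\phi(e_\beta)\dott n_{J'M} \ot S(m_{\alpha\beta})\dott m_{M\alpha}.
$$
The key identity is $\sum_\alpha S(m_{\alpha\beta})\dott m_{M\alpha} = \delta_{M\beta}$. This follows from the braided orthogonality $\sum_K m_{MK}\dott m_{\beta'K'} = \delta_{M\beta}$ after commuting the two factors: the phase $\defp_{M\beta'}\defp_{K'K}$ is trivial, because $\theta_{IJ}=-\theta_{IJ'}$ together with antisymmetry forces both $\theta_{II}=0$ and $\theta_{II'}=0$. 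The boundary term $\alpha=0$ missing from the sum vanishes since $S(m_{0\beta}) = m_{\beta'0} = 0$ for $\beta\ne 0$. Hence $\delta(b_J) = b_J \ot 1$ and $b_J \in \O(S^{2n}_\theta)$, completing the proof.

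\medskip
The only technically delicate point is Step 3: one must carefully track the order of noncommutative products and verify that the deformation phases $\defp_{IJ}$ arising when the factors $m_{MK}$ and $m_{\beta'K'}$ are permuted all cancel, so that the classical Hopf algebra identity $\sum_\alpha S(m_{\alpha\beta})\dott m_{M\alpha} = \delta_{M\beta}$ survives verbatim in the braided setting. Apart from this bookkeeping, the argument is formally parallel to the classical reconstruction of sections of an associated bundle from equivariant maps on the total space of the frame bundle.
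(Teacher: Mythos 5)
Your proof is correct, but it follows a genuinely different route from the paper's. The paper starts from the assertion that equivariance forces $\phi(e_\alpha)=\sum_{J,K}b_\alpha^{JK}\dott n_{JK}$ with coefficients in $\O(S^{2n}_\theta)$ (a Peter--Weyl--type linearity ansatz stated without proof), and then uses \eqref{seeq} to conclude $b^{JK}_\alpha=0$ unless $K=\alpha'$ and that the coefficients are independent of $\alpha$. You instead give a constructive reconstruction: for arbitrary $\phi\in\mathcal{E}_T$ you define $b_J=\sum_{\alpha}\phi(e_\alpha)\dott n_{J'\alpha}$, collapse $\sum_J b_J\,\phi_{(J)}$ back to $\phi$ via the orthogonality relations \eqref{orth-cond-comp}, and verify coinvariance of the $b_J$. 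This buys you two things the paper's argument leaves implicit: a proof (rather than an assertion) that equivariant maps are $B$-linear in the matrix generators, and an explicit check that the candidate generators $\phi_{(J)}$ actually satisfy \eqref{seeq} (your Step 1, which the paper omits). The paper's version is shorter but leans on the unproven ansatz.

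One small imprecision in your Step 3: the phase $\defp_{M\beta'}$ arising from commuting $m_{MK}$ past $m_{\beta'K'}$ is \emph{not} identically trivial; $\theta_{M\beta'}=-\theta_{M\beta}$ vanishes only when $M=\beta$. The argument still goes through because the phase is independent of the summation index $K$ (since $\defp_{K'K}=1$), so it factors out of the sum and equals $1$ precisely on the support of $\delta_{M\beta}$. In fact the detour is unnecessary: the identity $\sum_\alpha m_{\beta'\alpha'}\dott m_{M\alpha}=\delta_{M\beta}$ is directly the second relation in \eqref{orth-cond-comp} with $I=\beta'$, $J=M'$ and summation index $K=\alpha'$, with no commutation of factors required.
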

\begin{proof}
With the coactions $\delta(n_{JK}) = \sum_L n_{JL} \ot m_{LK}$ and $\gamma(e_\alpha) = \sum_\beta m_{\alpha\beta} \ot e_\beta$, 
the  condition in \eqref{seeq} implies that an equivariant map is linear in the generators $n_{KL}$:
$$
\phi(e_\alpha) = \sum\nolimits_{J,K} b_\alpha^{JK} \dott n_{JK}, \qquad b_\alpha^{JK} \in \O(S^{2n}_\theta) .
$$ 
Then \eqref{seeq} yields 
$$ 
\sum\nolimits_{J,K,L} b_\alpha^{JK} \dott n_{JL} \ot m_{LK}  = 
\sum\nolimits_{\beta ,J,K} b_\beta^{JK} \dott n_{JK} \ot m_{\beta' \alpha'} .
$$ 
Thus $b_\alpha^{JK} = 0$ when $K \neq \alpha'$ and $\phi(e_\alpha)$ reduces to
$\phi(e_\alpha) = \sum\nolimits_{J} b_\alpha^{J} \dott n_{J\alpha'}$. The equivariance implies  
$b_\alpha^{J} = b_\beta^{J}$, for all $\beta$, that is the coefficients  do not depend on the basis element $e_\alpha$. 
The general equivariant map is thus written as $\phi(e_\alpha) = \sum\nolimits_{J} \beta^{J} \dott n_{J\alpha'}$ with $\beta^{J} \in \O(S^{2n}_\theta)$. This concludes the proof.
\end{proof}

The module generators above are not independent:
$$
\sum\nolimits_J u_{J'} \, \phi_{(J)} =0 
$$
 for $u_J=n_{J0}$ the generators of the sphere $\O(S^{2n}_\theta)$ and module structure  
written as $(b  \phi_{(J)})(e_\alpha)= b \dott \left( \phi_{(J)}(e_\alpha)\right) $, 
for $b \in \O(S^{2n}_\theta)$.
Indeed, from \eqref{orth-cond-comp},
 $$
\sum\nolimits_J u_{J'} \dott \phi_{(J)} (e_\alpha) =  \sum\nolimits_J u_{J'} \dott n_{J \alpha'} = \sum\nolimits_J n_{J' 0} \dott n_{J \alpha'} = \delta_{0 \alpha'}=0 \, .
$$

The $\O(S^{2n}_\theta)$-module of equivariant maps  $\mathcal{E}_T$  can be realised as the image of the free module via a suitable projection with entries in $\O(S^{2n}_\theta)$.
Define $2n$ vectors $|\varphi_\alpha \rangle$,  $\alpha \neq \alpha'$, with components
$$ 
|\varphi_\alpha \rangle_{J}:= n_{J\alpha'} \, , \quad J=1, \dots, 2n+1 .
$$
From the orthogonality condition of the matrix $N$ in \eqref{orth-cond-comp}, these are orthonormal 
$\langle  \varphi_\alpha , \varphi_\beta \rangle = \sum\nolimits_ J n_{J' \alpha} \, n_{J \beta'} = \delta_{\alpha \beta}$ and we get a matrix projection
$$
p := \sum\nolimits_{\alpha \neq \alpha'}   |\varphi_\alpha \rangle \langle  \varphi_\alpha| \, .
$$
The entries of $p$ are in $\O(S^{2n}_\theta)$: using again the orthogonality condition one computes 
\beq\label{p-tg}
p_{IJ}= \sum\nolimits_{\alpha}  n_{I \alpha} \dott n_{J' \alpha'} =  \delta_{I J} - n_{I0} \dott n_{J' 0} =  \delta_{I J} - u_{I} \dott u_{J'} \, .
\eeq
This projector has rank $2n$,  its trace is 
$$
tr(p)= \sum\nolimits_{J=1}^{2n+1} (\delta_{JJ} - u_{J} \dott u^*_{J}) = 2n \cdot 1 \, ,
$$
with $1$ the constant function. We can then identify 
$\mathcal{E}_T = (\O(S^{2n}_\theta)^{2n+1})p$. In this way the rows
of $p$ are a set of generators for the module $\mathcal{E}_T$.

 The module $\mathcal{E}_T$ gives the `tangent module', that is, the derivations 
${\rm{Der}}^{\r}(\O(S^{2n}_\theta))$. This can be seen in two
different ways. On the one hand, from the expression \eqref{p-tg}, the rows of
$p$, that is, the generators of $\mathcal{E}_T$, are the components of the generators $\ch_J$ in \eqref{wh-s2n}.

On the other hand, let   $ | \Delta \rangle$ be  the unit vector  with
components $(u_J, J = 1, \dots, 2n +1)$, and let $p_N= | \Delta
\rangle  \langle \Delta |$ be the `normal' projection  with corresponding `normal' bundle 
$\mathcal{E}_N$. From $p\oplus p_N={\rm{id}}$ we read the direct sum decomposition $\O(S^{2n}_\theta)^{2n+1}= \mathcal{E}_T \oplus \mathcal{E}_N$. 
We have as a consequence a module isomorphism between the $\O(S^{2n}_\theta)$-module  
${\rm{Der}}^{\r}(\O(S^{2n}_\theta))$ of derivations of
$\O(S^{2n}_\theta)$ and the $\O(S^{2n}_\theta)$-module
$\mathcal{E}_T$.

We then
identify the generators $\ch_J$  of ${\rm{Der}}^{\r}(\O(S^{2n}_\theta))$ in \eqref{der-s2n} 
with the generators $\phi_{(J)}$ of the module $\mathcal{E}_T$ in \eqref{phiJ} via the module map 
$$
\Gamma: {\rm{Der}}^{\r}(\O(S^{2n}_\theta)) \to \mathcal{E}_T, \quad \ch_J \mapsto \phi_{(J)} \, \qquad J = 1, \dots, 2n+1.
$$

The matrix $Q$ defining the orthogonality condition \eqref{ort-con-Q} 
is used for a metric on $\mathcal{E}_T$,
$$ 
g : {\rm{Der}}^{\r}(\O(S^{2n}_\theta)) \times  {\rm{Der}}^{\r}(\O(S^{2n}_\theta)) \to \O(S^{2n}_\theta) \, .
$$
This is the restriction of the standard metric on the free module $\O(S^{2n}_\theta)^{2n+1}$ to 
the module $\mathcal{E}_T = (\O(S^{2n}_\theta)^{2n+1}) p \simeq {\rm{Der}}^{\r}(\O(S^{2n}_\theta))$.
 Thinking of the generators $\ch_J$ as the rows of the projection \eqref{p-tg}, on these the metric is defined by
$$
g(\ch_J, \ch_K) := \sum\nolimits_{L, I} (\ch_J)_L \dott Q_{LI} \dott (\ch_K)_I 
$$
and computed to be \begin{align}\label{metr}
g(\ch_J, \ch_K) &=
\sum\nolimits_{L,I} \delta_{LI'} \, (\ch_J)_L \dott (\ch_K)_{I} = 
\sum\nolimits_L (\ch_J)_L \dott (\ch_K)_{L'} \nn \\ 
& \:= \sum\nolimits_L p_{JL} \dott p_{K L'} = \sum\nolimits_L p_{J L} \dott p_{L K'} = p_{J K'} \nn \\
&\: = \delta_{J K'} - u_{J} \dott u_K = \ch_J(u_K).
\end{align}
Here we used the properties $p^\dagger = p$  and $p^2=p$ read as $p_{K'L'} = p_{LK}$ and
$\sum_L p_{KL} \dott p_{LJ} = p_{KJ}$.
The expression \eqref{metr} is extended by $\O(S^{2n}_\theta)$-bilinearity. 
Clearly the metric $g$ is braided-symmetric: 
$g(\ch_K, \ch_J) = \lambda_{KJ} \, g(\ch_J, \ch_K)$. 

\subsection{The Levi-Civita connection} \label{sec:LC}
With the identification above, the connection (the splitting) $\rho$ of the Atiyah sequence 
in \eqref{hor-lift-SO} induces an affine connection on ${\rm{Der}}^{\r}(\O(S^{2n}_\theta))$. 

\begin{defi}
For $X,Y \in {\rm{Der}}^{\r}(\O(S^{2n}_\theta))$, the covariant derivative of $Y$ along $X$ is defined to be
\beq\label{cov-der}
\nabla_X Y := \Gamma^{-1} \big( \rho(X) (\Gamma(Y)) \big)
\eeq
for $\rho$ the splitting and the derivation $\rho(X)$ acting on the function $\Gamma(Y)$. 
\end{defi}
\noindent
Using that $\rho$ is equivariant and thus right and left $\O(S^{2n}_\theta)$-linear, one proves the following covariant derivative properties.

\begin{prop}\label{prop:cov-der}
The covariant derivative has {the} properties:
\begin{enumerate}[(i)]
\item $\nabla_{b X} Y= b \, \nabla_X Y$;
\item $\nabla_X (bY)= X(b) \ Y + (\r_\alpha \trl b) \, \nabla_{\r^\alpha \trl X} Y$
\end{enumerate}
for $X,Y \in {\rm{Der}}^{\r}(\O(S^{2n}_\theta))$ and $b \in  \O(S^{2n}_\theta)$.
\end{prop}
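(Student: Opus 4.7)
The plan is to unwind the definition $\nabla_X Y = \Gamma^{-1}\big(\rho(X) \circ \Gamma(Y)\big)$, where I view $\Gamma(Y) : \IR^{2n} \to \O(SO_\theta(2n+1,\IR))$ as an equivariant map, and $\rho(X)$ acts by post-composition. The three ingredients I will use are: (a) $\Gamma$ is a morphism of left $\O(S^{2n}_\theta)$-modules, so $\Gamma(bY) = b\,\Gamma(Y)$ with $(b\phi)(e_\alpha) := b \dott \phi(e_\alpha)$, and the same for $\Gamma^{-1}$; (b) by Remark~\ref{rem:eq} and Proposition~\ref{prop:equiv-conn} the splitting $\rho$ is $K$-equivariant and hence both left and right $\O(S^{2n}_\theta)$-linear, and $\r^\alpha \trl \rho(X) = \rho(\r^\alpha \trl X)$; (c) $\rho(X)$ is a braided $H$-equivariant derivation of $A = \O(SO_\theta(2n+1,\IR))$, whose restriction along $\pi$ to $B=\O(S^{2n}_\theta)$ equals $X$, so that $\rho(X)(b) = X(b)$ for $b \in B$.

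Before establishing (i) and (ii), I would first verify that $\rho(X) \circ \Gamma(Y)$ is again equivariant in the sense of \eqref{seeq}, so that $\Gamma^{-1}$ can actually be applied. Using the $H$-equivariance of $\rho(X)$, i.e.\ $\delta(\rho(X)(a)) = \rho(X)(\zero{a}) \ot \one{a}$, combined with $\delta(\Gamma(Y)(v)) = \Gamma(Y)(\zero{v}) \ot S(\mone{v})$, a direct chase gives $\delta\big(\rho(X)(\Gamma(Y)(v))\big) = \rho(X)(\Gamma(Y)(\zero{v})) \ot S(\mone{v})$, which is exactly the equivariance condition. Hence $\nabla_X Y \in {\rm{Der}}^{\r}(\O(S^{2n}_\theta))$ is well-defined.

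For (i), left $\O(S^{2n}_\theta)$-linearity of $\rho$ gives $\rho(bX) = b\,\rho(X)$; applying this derivation to $\Gamma(Y)(e_\alpha)$ via the left module structure \eqref{LAmodderA} yields $b \dott \rho(X)(\Gamma(Y)(e_\alpha))$. Thus $\rho(bX) \circ \Gamma(Y) = b\,(\rho(X) \circ \Gamma(Y))$ in $\mathcal{E}_T$, and the left $\O(S^{2n}_\theta)$-linearity of $\Gamma^{-1}$ produces $\nabla_{bX} Y = b\,\nabla_X Y$.

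For (ii), I write $\Gamma(bY)(e_\alpha) = b \dott \Gamma(Y)(e_\alpha)$ and apply the braided derivation rule \eqref{Der} to $\rho(X)$:
\[
\rho(X)\big(b \dott \Gamma(Y)(e_\alpha)\big) = \rho(X)(b) \dott \Gamma(Y)(e_\alpha) + (\r_\alpha \trl b) \dott \big(\r^\alpha \trl \rho(X)\big)\big(\Gamma(Y)(e_\alpha)\big).
\]
By ingredient (c) the first term is $X(b) \dott \Gamma(Y)(e_\alpha)$, while by the $K$-equivariance identity $\r^\alpha \trl \rho(X) = \rho(\r^\alpha \trl X)$ the second term is $(\r_\alpha \trl b) \dott \rho(\r^\alpha \trl X)(\Gamma(Y)(e_\alpha))$. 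Reassembling, $\rho(X) \circ \Gamma(bY) = X(b)\,\Gamma(Y) + (\r_\alpha \trl b)\,\big(\rho(\r^\alpha \trl X) \circ \Gamma(Y)\big)$ in $\mathcal{E}_T$; applying $\Gamma^{-1}$ and using its left $\O(S^{2n}_\theta)$-linearity gives the Leibniz formula $\nabla_X(bY) = X(b)\,Y + (\r_\alpha \trl b)\,\nabla_{\r^\alpha \trl X} Y$. No serious obstacle is anticipated; the only care needed is the bookkeeping between $\rho(X)$ acting as a derivation on elements of $A$ and $\rho(X)$ acting by post-composition on maps in $\mathcal{E}_T$.
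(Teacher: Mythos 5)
Your proposal is correct and is exactly the argument the paper intends: the paper omits the proof, merely noting that it follows from the equivariance (hence left and right $\O(S^{2n}_\theta)$-linearity) of $\rho$, and your unwinding via the left-linearity of $\Gamma$, the braided Leibniz rule for $\rho(X)$, and the identities $\rho(X)(b)=X(b)$ and $\r^\alpha\trl\rho(X)=\rho(\r^\alpha\trl X)$ supplies precisely the missing details, with the well-definedness check a welcome extra.
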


The Riemannian curvature of the covariant derivative is naturally defined as 
$$
\mathsf{R}(X,Y) \, Z:= \nabla_X \nabla_Y Z  -  \nabla_{\r_\alpha \trl Y} \nabla_{\r^\alpha \trl X} Z - \nabla_{[X,Y] }  Z\, ,
$$
for $X,Y,Z\in {\rm{Der}}^{\r}(\O(S^{2n}_\theta))$. 
The properties in Proposition \ref{prop:cov-der} imply that the curvature is left $\O(S^{2n}_\theta)$-linear while 
the equivariance of $\rho$ implies that it is also right $\O(S^{2n}_\theta)$-linear.

\begin{prop}
Let $\Omega(X,Y) = \rho([ X,Y] ) - [\rho(X), \rho(Y)]$ be the curvature of the equivariant splitting (the connection) $\rho$. Then
$$
\mathsf{R}(X,Y) \, Z = - \Gamma^{-1} \big(\Omega(X,Y) (\Gamma(Z)) \big).
$$
\end{prop}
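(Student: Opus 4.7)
\begin{proof}[Proof proposal]
My plan is to unfold both sides and reduce the identity to the equivariance of $\rho$ already established in the preceding propositions. Since $\Gamma$ is an isomorphism of right $\O(S^{2n}_\theta)$-modules that intertwines $\ch_J$ with $\phi_{(J)}$, it suffices to compare the two sides after applying $\Gamma$.

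\textbf{Step 1: Unfold $\nabla^2$.} From \eqref{cov-der}, for any $X,Y,Z\in {\rm{Der}}^{\r}(\O(S^{2n}_\theta))$
\[
\nabla_Y Z = \Gamma^{-1}\bigl(\rho(Y)(\Gamma(Z))\bigr),\qquad
\Gamma(\nabla_Y Z) = \rho(Y)(\Gamma(Z)),
\]
so iterating,
\[
\nabla_X\nabla_Y Z = \Gamma^{-1}\bigl(\rho(X)(\rho(Y)(\Gamma(Z)))\bigr) = \Gamma^{-1}\bigl((\rho(X)\circ \rho(Y))(\Gamma(Z))\bigr).
\]
The same formula applied after the braided swap gives
\[
\nabla_{\r_\alpha\trl Y}\nabla_{\r^\alpha\trl X} Z = \Gamma^{-1}\bigl((\rho(\r_\alpha\trl Y)\circ \rho(\r^\alpha\trl X))(\Gamma(Z))\bigr),
\]
and finally $\nabla_{[X,Y]}Z = \Gamma^{-1}\bigl(\rho([X,Y])(\Gamma(Z))\bigr)$.

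\textbf{Step 2: Assemble the Riemannian curvature.} Combining the three terms,
\[
\mathsf{R}(X,Y)Z = \Gamma^{-1}\Bigl(\bigl(\rho(X)\circ \rho(Y) - \rho(\r_\alpha\trl Y)\circ \rho(\r^\alpha\trl X) - \rho([X,Y])\bigr)(\Gamma(Z))\Bigr).
\]

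\textbf{Step 3: Invoke equivariance of $\rho$.} By the equivariant-splitting property established in the $SO_\theta$ analogue of Proposition~\ref{prop:equiv-conn}, $\rho$ commutes with the $K$-action: $\rho(\r_\alpha\trl Y) = \r_\alpha\trl_{\Der{\cdot}}\rho(Y)$, and likewise for $X$. Substituting into \eqref{bracket-der}, the first two summands become exactly the braided commutator on $\DerH{\O(SO_\theta(2n+1))}$:
\[
\rho(X)\circ \rho(Y) - \rho(\r_\alpha\trl Y)\circ \rho(\r^\alpha\trl X)
= \rho(X)\circ \rho(Y) - (\r_\alpha\trl \rho(Y))\circ(\r^\alpha\trl \rho(X))
= [\rho(X),\rho(Y)].
\]
Thus
\[
\mathsf{R}(X,Y)Z = \Gamma^{-1}\bigl(([\rho(X),\rho(Y)] - \rho([X,Y]))(\Gamma(Z))\bigr) = -\Gamma^{-1}\bigl(\Omega(X,Y)(\Gamma(Z))\bigr),
\]
as claimed.
\end{proof}

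The argument is essentially formal; the only substantive input is the equivariance of the splitting $\rho$, which collapses the braided bracket $\lq \rho,\rho\rq$ appearing in the general curvature formula \eqref{hcur} to the ordinary braided commutator $[\rho(X),\rho(Y)]$ used in the statement. I do not anticipate an obstacle beyond carefully tracking the $\r$-insertions when identifying the second term of $\mathsf{R}$ with the braided commutator; this is handled cleanly by rewriting the action on derivations via \eqref{action-der} together with $K$-equivariance $k\trl \rho = \varepsilon(k)\rho$.
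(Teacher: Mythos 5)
Your proof is correct and takes essentially the same route as the paper's: apply $\Gamma$, iterate $\Gamma(\nabla_Y Z)=\rho(Y)(\Gamma(Z))$ to get the three-term operator $\rho(X)\rho(Y)-\rho(\r_\alpha\trl Y)\rho(\r^\alpha\trl X)-\rho([X,Y])$, and use the $K$-equivariance of $\rho$ to identify the first two terms with the braided commutator $[\rho(X),\rho(Y)]$. The paper leaves that last identification implicit ("from which the stated equality follows"), whereas you spell it out; no substantive difference.
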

\begin{proof} 
From $\Gamma(\nabla_Y Z)  = \rho(Y) (\Gamma(Z))$ one gets $\Gamma( \nabla_X \nabla_Y Z)  = \rho(X) \rho(Y) (\Gamma(Z))$. 
Thus,
\begin{align*}
\Gamma\big(\mathsf{R}(X,Y) \, Z\big) = \Big( \rho(X) \rho(Y) - \rho(\r_\alpha \trl Y) \rho(\r^\alpha \trl X) - \rho([ X,Y] )\Big) (\Gamma(Z))
\end{align*}
from which the stated equality follows.
\end{proof}

Furthermore, the  torsion tensor of the covariant derivative is  defined as 
$$
\mathsf{T}(X,Y):= \nabla_X Y  -  \nabla_{\r_\alpha \trl Y} (\r^\alpha \trl X) - [X,Y]   \, ,
$$
for $X,Y \in {\rm{Der}}^{\r}(\O(S^{2n}_\theta))$. 
Again it is $\O(S^{2n}_\theta)$-bilinear.
 
\medskip
As we see below, the covariant derivative in \eqref{cov-der} is the Levi-Civita one.
On the  generators $\ch_J$  of the  module  ${\rm{Der}}^{\r}(\O(S^{2n}_\theta))$, formula \eqref{cov-der} gives
\beq\label{cov-der-T}
\nabla_{\ch_J} \ch_K = - \ch_J \cdot u_K \, .
\eeq
(As before we write $ X \cdot b $ to  distinguish the right $\O(S^{2n}_\theta)$-module structure from the evaluation 
 of a derivation on an element in $\O(S^{2n}_\theta)$.) 
Indeed, using \eqref{rhowj} one computes,
$$
\Gamma(\nabla_{\ch_J} \ch_K)(e_\alpha) = \rho(\ch_J) (n_{K \alpha'}) 
= - n_{J \alpha'} \dott u_K = - \big( \Gamma(\ch_J)(e_\alpha) \big) \dott u_K .
$$

\begin{prop}
The covariant derivative in \eqref{cov-der} is torsion free.
\end{prop}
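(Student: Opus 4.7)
The plan is to reduce to a check on the algebra generators $\ch_J$ of ${\rm{Der}}^{\r}(\O(S^{2n}_\theta))$ and then assemble the three pieces of $\mathsf{T}(\ch_J,\ch_K)$ explicitly. Since the torsion tensor is stated to be $\O(S^{2n}_\theta)$-bilinear, vanishing on the generators will suffice.

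First, from \eqref{cov-der-T} I would write
$\nabla_{\ch_J}\ch_K = -\ch_J \cdot u_K$. The key observation is that $\ch_J$ and $u_J$ carry the same weight $\mathbf{w}(J)$ under the $K$-action (inherited from the definition $\ch_J = \sum_I u_{I'}\wl_{IJ}^\pi$, whose homogeneity can be read off from \eqref{hhtact}). Since the $\r$-matrix acts diagonally on weight vectors (as in Remark~\ref{rem:topo}), the right-module structure $Y \cdot b = (\r_\alpha \trl b)(\r^\alpha \trl Y)$ gives the same braiding scalar as for elements of $\O(S^{2n}_\theta)$. Using \eqref{comm-rel-so}, this yields $\ch_J \cdot u_K = \defp_{JK}\, u_K\, \ch_J$ and hence $\nabla_{\ch_J}\ch_K = -\defp_{JK}\, u_K\, \ch_J$.

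Next, for the term $\nabla_{\r_\alpha \trl \ch_K}(\r^\alpha \trl \ch_J)$, since both $\ch_K$ and $\ch_J$ are weight vectors, $\r_\alpha \trl \ch_K \otimes \r^\alpha \trl \ch_J = \defp_{JK}\, \ch_K \otimes \ch_J$ (same scalar as $(\r_\alpha \trl u_K)(\r^\alpha \trl u_J) = \defp_{JK}\, u_K u_J$). Applying \eqref{cov-der-T} and then the previous step (swapping indices) gives
\[
\nabla_{\r_\alpha \trl \ch_K}(\r^\alpha \trl \ch_J) = \defp_{JK}\,\nabla_{\ch_K}\ch_J = -\defp_{JK}\defp_{KJ}\, u_J\, \ch_K = -u_J\, \ch_K,
\]
where the last equality uses the antisymmetry $\theta_{JK} = -\theta_{KJ}$ from \eqref{thetagen}, so that $\defp_{JK}\defp_{KJ} = 1$.

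Finally, Proposition~\ref{prop:Ders2n} gives $[\ch_J,\ch_K] = u_J\,\ch_K - \defp_{JK}\, u_K\, \ch_J$. Substituting the three computed quantities into the definition of the torsion,
\[
\mathsf{T}(\ch_J,\ch_K) = -\defp_{JK}\, u_K\,\ch_J + u_J\,\ch_K - \bigl(u_J\,\ch_K - \defp_{JK}\,u_K\,\ch_J\bigr) = 0,
\]
completing the verification on generators. The main obstacle — really the only subtlety — is bookkeeping the braiding scalars and matching them between the right $B$-module structure on derivations and the commutation relations of $\O(S^{2n}_\theta)$; this is resolved once one notes that $\ch_J$ inherits the weight of $u_J$ and that $\defp_{JK}\defp_{KJ}=1$.
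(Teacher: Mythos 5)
Your proof is correct and follows essentially the same route as the paper: reduce to the generators $\ch_J$, use \eqref{cov-der-T} together with the right-module structure \eqref{RAmodderA} to rewrite $-\ch_J\cdot u_K$ as $-\defp_{JK}\,u_K\,\ch_J$, and compare with the bracket of Proposition \ref{prop:Ders2n}. The only difference is that you spell out the weight/braiding bookkeeping (that $\ch_J$ carries the weight of $u_J$ and $\defp_{JK}\defp_{KJ}=1$) which the paper leaves implicit.
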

\begin{proof} 
Using \eqref{cov-der-T}, we compute
$$
\nabla_{\ch_I} \ch_J  - \lambda^{IJ} \nabla_{\ch_J} \ch_I= 
 - \ch_I  \cdot u_J +  \lambda_{IJ} \ch_J \cdot u_I  =  -  \lambda_{IJ} u_J \ch_I +   u_I  \ch_J =    [\ch_I, \ch_J]  
$$
where we used the module structure \eqref{RAmodderA} for the second equality and Proposition \ref{prop:Ders2n} for the last one. Then $\mathsf{T}(\ch_I, \ch_J)=0$ as claimed.
\end{proof}
 \begin{prop}
The covariant derivative is compatible with the metric \eqref{metr}:
$$
g(\nabla_X Y, Z) + \, g(\r_\alpha \trl Y, \nabla_{\r^\alpha \trl X} Z) = X (g(Y, Z)) . 
$$
for $X,Y,Z\in {\rm{Der}}^{\r}(\O(S^{2n}_\theta))$.
\begin{proof}
On generators the above becomes,
\beq\label{metcon}
g(\nabla_{\ch_L} \ch_J, \ch_K) + \lambda_{JL} \, g(\ch_J, \nabla_{\ch_L} \ch_K) = \ch_L \left(g(\ch_J, \ch_K) \right). 
\eeq
Then, from the explicit expression \eqref{cov-der-T} and \eqref{metr} one computes, 
\begin{align*}
g(\nabla_{\ch_L} \ch_J, \ch_K) & + \lambda_{JL} \, g(\ch_J, \nabla_{\ch_L} \ch_K) = 
- g(\ch_L \cdot u_J, \ch_K) - \lambda_{JL} \, g(\ch_J, \ch_L \cdot u_K)  \\
& = - \lambda_{JL}\, u_J \dott g(\ch_L, \ch_K) - g(\ch_L, \ch_J) \dott u_K   \\
& = - \lambda_{JL}\, u_J \dott \ch_L(u_K) - \ch_L(u_J) \dott u_K
\end{align*}
which coincides with the right hand side of \eqref{metcon} by the last line in \eqref{metr}.
\end{proof}
\end{prop}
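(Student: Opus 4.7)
The plan is to exploit the bilinearity of $g$ and the module properties of $\nabla$ recorded in Proposition \ref{prop:cov-der} to reduce the compatibility identity, which is ``tensorial'' in the three arguments up to the Leibniz-type correction coming from the derivation action of $X$ on $g(Y,Z)$, to a check on a generating set of ${\rm{Der}}^{\r}(\O(S^{2n}_\theta))$ as a right $\O(S^{2n}_\theta)$-module. I would take $X=\ch_L$, $Y=\ch_J$, $Z=\ch_K$, with $\ch_I$ the generators of \eqref{der-s2n}; these are joint eigenvectors for the diagonal $K$-action, so the $R$-matrix evaluates to a scalar $\defp_{JL}$ on the tensor $\ch_J\otimes\ch_L$, i.e.\ $\r_\alpha\trl \ch_J\otimes \r^\alpha\trl \ch_L = \defp_{JL}\,\ch_J\otimes \ch_L$. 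The identity to be verified then takes the concrete form
\[
g(\nabla_{\ch_L}\ch_J,\ch_K)+\defp_{JL}\,g(\ch_J,\nabla_{\ch_L}\ch_K)\;=\;\ch_L\!\left(g(\ch_J,\ch_K)\right).
\]

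On the left-hand side I would substitute the explicit expressions $\nabla_{\ch_J}\ch_K=-\ch_J\cdot u_K$ from \eqref{cov-der-T} together with $g(\ch_J,\ch_K)=\ch_J(u_K)=\delta_{J K'}-u_J\dott u_K$ from \eqref{metr}. The right module structure \eqref{RAmodderA} applied to the weight vectors $u_J$ and $\ch_L$ gives $\ch_L\cdot u_J=\defp_{JL}\,u_J\,\ch_L$, and combining this with left $\O(S^{2n}_\theta)$-linearity of $g$ in the first slot, right $\O(S^{2n}_\theta)$-linearity in the second slot, and the braided symmetry $g(\ch_L,\ch_J)=\defp_{LJ}\,g(\ch_J,\ch_L)$ to realign the scalar in the second summand, the left-hand side collapses to
\[
-\defp_{JL}\,u_J\dott \ch_L(u_K)\;-\;\ch_L(u_J)\dott u_K.
\]
For the right-hand side, the formula $g(\ch_J,\ch_K)=\ch_J(u_K)=\delta_{J K'}-u_J\dott u_K$ reduces $\ch_L(g(\ch_J,\ch_K))$ to $-\ch_L(u_J\dott u_K)$, and the braided Leibniz rule \eqref{Der} for the derivation $\ch_L$ applied to this product produces exactly the same two summands.

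The only delicate point, and therefore the main obstacle, is confirming that the scalar $\defp_{JL}$ produced by the right module structure of the braided derivations matches the one produced by the braided Leibniz rule applied to $u_J\dott u_K$. Both arise from the same diagonal action of the triangular Hopf algebra $(K,\r)$ on the weight vectors of weights determined by the indices $J$ and $L$, so the matching is forced once one reads off those weights. After this bookkeeping is settled, the identity on generators is a termwise comparison, and bilinearity extends it to arbitrary $X,Y,Z\in {\rm{Der}}^{\r}(\O(S^{2n}_\theta))$.
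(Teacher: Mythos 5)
Your proposal is correct and follows essentially the same route as the paper: reduction to the generators $\ch_L,\ch_J,\ch_K$, substitution of $\nabla_{\ch_J}\ch_K=-\ch_J\cdot u_K$ from \eqref{cov-der-T} together with $g(\ch_J,\ch_K)=\ch_J(u_K)=\delta_{JK'}-u_J\dott u_K$ from \eqref{metr}, and comparison of the resulting two terms with the braided Leibniz expansion of $\ch_L(u_J\dott u_K)$. The matching of the braiding scalars that you flag as the delicate point is precisely what the paper's displayed computation carries out via the right module structure and the bilinearity of $g$.
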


\subsection{The Riemannian geometry}

For the covariant derivative $\nabla$ in \eqref{cov-der}, the Riemannian curvature on the  generators 
$\ch_J \in {\rm{Der}}^{\r}(\O(S^{2n}_\theta))$ is given by
\begin{align}
\mathsf{R}( \ch_I,\ch_J) \, \ch_K
& = \ch_I \, \cdot \ch_J(u_K)  - \lambda_{IJ}  \ch_J \, \cdot \ch_I(u_K)   \nn \\
& = \ch_I \, \cdot g(\ch_J, \ch_K)  - \lambda_{IJ} \, \ch_J \, \cdot g(\ch_I, \ch_K) . \label{rimmet}
\end{align}
 First, using Proposition \ref{prop:cov-der},  we have
\begin{align*}
 \nabla_{\ch_I} \nabla_{\ch_J} \ch_K  
 &
 =  \nabla_{\ch_I} (- \ch_J \cdot u_K)
 = - \lambda_{JK} \nabla_{\ch_I} (u_K \ch_J )
 = - \lambda_{IJ}  \ch_J \, \cdot \ch_I(u_K) + \ch_I \, \cdot u_J \dott u_K
\end{align*}
 from \eqref{cov-der-T},  
 and 
 \begin{align*}
 \nabla_{[\ch_I,\ch_J] }  \ch_K 
 &
=  \nabla_{-  \lambda_{IJ} u_J \ch_I +   u_I  \ch_J}  \ch_K
= - u_I \ch_J \cdot u_K + \lambda_{IJ}u_J \ch_I \cdot u_K \, .
  \end{align*}
Then,
\begin{align*}
\mathsf{R}( \ch_I,\ch_J) \, \ch_K
&
= \nabla_{\ch_I} \nabla_{\ch_J} \ch_K  -  \lambda_{IJ} \nabla_{\ch_J} \nabla_{\ch_I} \ch_K - \nabla_{[\ch_I,\ch_J] }  \ch_K 
\\
&= - \lambda_{IJ}  \ch_J \, \cdot \ch_I(u_K) + \ch_I \, \cdot u_J \dott u_K  
+  \ch_I \cdot \ch_J(u_K) \nn \\
& \qquad\qquad - \lambda_{IJ}  \ch_J \, \cdot u_I \dott u_K
+u_I \ch_J \cdot u_K - \lambda_{IJ}u_J \ch_I \cdot u_K
\\
&
= - \lambda_{IJ}  \ch_J \, \cdot \ch_I(u_K)   +  \ch_I \, \cdot \ch_J(u_K) .
\end{align*}
The second equality in \eqref{rimmet} follows from the last line in definition \eqref{metr}.
 
We next consider the dual $({\rm{Der}}^{\r}(\O(S^{2n}_\theta)))'$ of ${\rm{Der}}^{\r}(\O(S^{2n}_\theta))$
with respect to the  metric $g$ in \eqref{metr}. That is we have a map still denoted $g$ and defined on generators by 
\beq\label{dualbasis}
\theta_J := g(\ch_J) :  {\rm{Der}}^{\r}(\O(S^{2n}_\theta)) \to \O(S^{2n}_\theta), \quad \ch_K \mapsto \theta_J (\ch_K) = 
g(\ch_J, \ch_K) = \ch_J(u_K). 
\eeq

\begin{prop}
The Ricci tensor, defined by
$$
\mathsf{R}(\ch_J \, , \ch_K) := \sum\nolimits_I \theta_{I'} \big(\mathsf{R}( \ch_I,\ch_J) \, \ch_K\big)
$$
is computed to be  
\beq\label{ricci}
\mathsf{R}(\ch_J \, , \ch_K) = (2n-1) \, \ch_j(u_K) = (2n-1) \, g(\ch_J \, , \ch_K) \, .
\eeq 
The scalar curvature defined by
$$
\mathsf{r} = {\sum\nolimits_J}\mathsf{R}(g^{-1}(\theta_{J'}) \, , \ch_J)
$$
is computed to be
\beq\label{sc}
\mathsf{r} = 2n (2n-1).
\eeq
\end{prop}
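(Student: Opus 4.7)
The plan is to substitute the explicit formula \eqref{rimmet} for the Riemannian curvature and the expression \eqref{metr} for the metric directly into the definitions of the Ricci and scalar curvatures, then reduce everything to the sphere relation $\sum\nolimits_I u_{I'} \dott u_I = 1$ together with the braiding identities $u_I \dott u_J = \lambda_{IJ}\, u_J \dott u_I$ and $\lambda_{K'J} = \lambda_{KJ}^{-1}$ (the latter following from $\theta_{IJ} = -\theta_{IJ'}$ in \eqref{thetagen}).

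For the Ricci tensor, I would first observe that the dual basis $\theta_J = g(\ch_J, \cdot)$ from \eqref{dualbasis} is right $\O(S^{2n}_\theta)$-linear. Applying it to \eqref{rimmet} gives
\begin{align*}
\mathsf{R}(\ch_J, \ch_K) = \sum\nolimits_I g(\ch_{I'}, \ch_I) \dott g(\ch_J, \ch_K) - \sum\nolimits_I \lambda_{IJ}\, g(\ch_{I'}, \ch_J) \dott g(\ch_I, \ch_K).
\end{align*}
The first sum is immediate: $g(\ch_{I'}, \ch_I) = 1 - u_{I'} \dott u_I$, so summing and using the sphere relation yields $2n\, g(\ch_J, \ch_K)$. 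For the second sum one expands each metric factor via $g(\ch_J, \ch_K) = \delta_{JK'} - u_J \dott u_K$ into four pieces. The $\delta$-$\delta$ piece collapses at $I=J$ to $-\delta_{JK'}$. The two mixed $\delta$-$u^2$ pieces collapse respectively at $I=J$ and $I=K'$; in the latter case $\lambda_{K'J} = \lambda_{KJ}^{-1}$ combined with $u_K \dott u_J = \lambda_{KJ}\, u_J \dott u_K$ absorbs all braiding factors, yielding $+u_J \dott u_K$ in each case. Finally, the quartic piece, after rewriting $u_J \dott u_I = \lambda_{IJ}^{-1}\, u_I \dott u_J$ and invoking the sphere relation once more, contributes $-u_J \dott u_K$. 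The four pieces sum to $-(\delta_{JK'} - u_J \dott u_K) = -g(\ch_J, \ch_K)$, and together with the first sum this gives \eqref{ricci}.

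For the scalar curvature, since $\theta_J = g(\ch_J, \cdot)$ we have $g^{-1}(\theta_{J'}) = \ch_{J'}$, and thus
\begin{align*}
\mathsf{r} = \sum\nolimits_J \mathsf{R}(\ch_{J'}, \ch_J) = (2n-1) \sum\nolimits_J g(\ch_{J'}, \ch_J) = (2n-1) \cdot 2n,
\end{align*}
again by the sphere-relation count. The main obstacle I anticipate is the bookkeeping in the second Ricci sum: one has to verify that the three $u_J \dott u_K$-type contributions (two with sign $+1$ from the mixed $\delta$-$u^2$ terms and one with sign $-1$ from the quartic term) combine to give precisely $+u_J \dott u_K$, so that together with $-\delta_{JK'}$ one recovers $-g(\ch_J, \ch_K)$. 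This requires careful tracking of the $\lambda_{IJ}$ factors and their interaction with the commutation relations of the $u$'s inside each summand.
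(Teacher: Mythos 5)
Your proposal is correct and follows essentially the same route as the paper: apply the right-$B$-linear dual forms $\theta_{I'}$ to \eqref{rimmet}, evaluate the first sum to $2n\,g(\ch_J,\ch_K)$ via the sphere relation, and show the second sum contributes $-g(\ch_J,\ch_K)$. The only cosmetic difference is that you fully expand both metric factors into four pieces, whereas the paper keeps $\ch_I(u_K)$ intact and invokes the relation $\sum_I u_{I'}\ch_I=0$ to kill the quadratic-plus-quartic remainder in one stroke; the braiding bookkeeping you describe ($\lambda_{K'J}=\lambda_{KJ}^{-1}$, the $u$-commutation relations) works out exactly as you anticipate.
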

\begin{proof}
For the Ricci tensor, from \eqref{rimmet} and \eqref{dualbasis}, 
$$
\mathsf{R}(\ch_J \, , \ch_K) := \sum\nolimits_I \big( \theta_{I'}(\ch_I) \, \ch_J (u_K) - \lambda_{IJ} \, \theta_{I'}(\ch_J) \, \ch_I(u_K) \big)
$$
The first term is given by $\sum_I (\delta_{I' I'} - u_{I'} \dott u_{I} ) \, \ch_J (u_K) = 2n \, \ch_J (u_K)$ while the second one is 
$ - \sum_I \lambda_{IJ} (\delta_{I' J'} - u_{I'} \dott u_{J} ) \, \ch_I (u_K)  = - \ch_J (u_K) + u_J \dott \sum_I u_{I'}(\ch_I (u_K))  = -\ch_J (u_K) $, being $\sum_I u_{I'} \ch_I =0$. When added they give \eqref{ricci} using that $\ch_J (u_K)=g(\ch_J \, , \ch_K)$.

For the scalar curvature one has then
$$
r = (2n-1) \sum\nolimits_J \, \ch_J(u_{J'}) = (2n-1) \sum\nolimits_J (\delta_{J J} - u_{J'} \dott u_{J} ) = (2n-1) 2n.
$$
as stated.
\end{proof}

\noindent
\textbf{Acknowledgments.}~\\[.5em]
PA acknowledges  partial support from INFN, CSN4, Iniziativa
Specifica GSS and from INdAM-GNFM. PA acknowledges financial support from Universit\`a del Piemonte Orientale. 
GL acknowledges partial support from INFN, Iniziativa Specifica GAST
and INFN Torino DGR4 
as well as hospitality from Universit\`a del  Piemonte Orientale and INFN Torino.
GL acknowledges partial support from 
INdAM-GNSAGA.
GL acknowledges support from PNRR MUR projects PE0000023-NQSTI.
CP acknowledges support from Universit\`a di Trieste (assegni
Dipartimenti di Eccellenza,  legge n. 232 del 2016) and University of Naples Federico II under the grant FRA 2022 GALAQ: Geometric and ALgebraic Aspects of Quantization.
This article is based upon work from COST Action CaLISTA CA21109 supported by COST 
(European Cooperation in Science and Technology).

\end{document}